\newcommand{\al}{\alpha}
\newcommand{\eps}{\varepsilon}
\newcommand{\ld}{\lambda}
\newcommand{\fl}{\rightarrow}
\newcommand{\gfl}{\longrightarrow}
\newcommand{\dr}{\ar@{->}[r]}
\newcommand{\dri}{\ar@{>->}[r]}
\newcommand{\drp}{\ar@{-->}[r]}
\newcommand{\dre}{\ar@{->>}[r]}
\newcommand{\dreg}{\ar@{=}[r]}
\newcommand{\drm}{\ar@{^{(}->}[r]}
\newcommand{\ddr}{\ar@{->}[rr]}
\newcommand{\ddre}{\ar@{->>}[rr]}
\newcommand{\ddreg}{\ar@{=}[rr]}
\newcommand{\ha}{\ar@{->}[u]}
\newcommand{\hae}{\ar@{->>}[u]}
\newcommand{\hap}{\ar@{-->}[u]}
\newcommand{\ham}{\ar@{^{(}->}[u]}
\newcommand{\hham}{\ar@{^{(}->}[uu]}
\newcommand{\hag}{\ar@{->}[ul]}
\newcommand{\hagm}{\ar@{^{(}->}[ul]}
\newcommand{\hagp}{\ar@{-->}[ul]}
\newcommand{\hdr}{\ar@{->}[ur]}
\newcommand{\hdrm}{\ar@{^{(}->}[ur]}
\newcommand{\hdri}{\ar@{>->}[ur]}
\newcommand{\hdre}{\ar@{->>}[ur]}
\newcommand{\hdrp}{\ar@{-->}[ur]}
\newcommand{\hddrp}{\ar@{-->}[urr]}
\newcommand{\bas}{\ar@{->}[d]}
\newcommand{\bbas}{\ar@{->}[dd]}
\newcommand{\basm}{\ar@{^{(}->}[d]}
\newcommand{\basi}{\ar@{>->}[d]}
\newcommand{\base}{\ar@{->>}[d]}
\newcommand{\basp}{\ar@{-->}[d]}
\newcommand{\baseg}{\ar@{=}[d]}
\newcommand{\bbaseg}{\ar@{=}[dd]}
\newcommand{\bdr}{\ar@{->}[dr]}
\newcommand{\bdre}{\ar@{->>}[dr]}
\newcommand{\bbdr}{\ar@{->}[ddr]}
\newcommand{\bddr}{\ar@{->}[drr]}
\newcommand{\gau}{\ar@{->}[l]}
\newcommand{\bg}{\ar@{->}[dl]}
\newcommand{\bgm}{\ar@{^{(}->}[dl]}
\newcommand{\bgp}{\ar@{-->}[dl]}
\newcommand{\bggp}{\ar@{-->}[dll]}
\newcommand{\bbgp}{\ar@{-->}[ddl]}
\newcommand{\bdro}{\ar@/_.5pc/@{->}[dr]^0}
\newcommand{\upex}{\ar@/_.7pc/@{-->}[u]}
\newcommand{\hdrex}{\ar@/^.5pc/@{-->}[ur]}
\newcommand{\bgex}{\ar@/^.5pc/@{-->}[dl]}
\newcommand{\gex}{\ar@/^.5pc/@{-->}[l]}
\newcommand{\ac}{\mathcal{A}}
\newcommand{\cat}{\mathcal{C}}
\newcommand{\dc}{\mathcal{D}}
\newcommand{\rc}{\mathcal{R}}
\renewcommand{\sc}{\mathcal{S}}
\newcommand{\tc}{\mathcal{T}}
\newcommand{\wc}{\mathcal{W}}
\newcommand{\shift}{\Sigma}
\newtheorem{theo}{Theorem}[section]
\newtheorem{prop}[theo]{Proposition}
\newtheorem{lemma}[theo]{Lemma}
\newtheorem{cor}[theo]{Corollary}
\newtheorem{defi}[theo]{Definition}
\newtheorem{rk}[theo]{Remark}
\tikzset{htpcof/.style={<-open triangle 60 reversed}}
\tikzset{>=stealth}
\newcommand{\had}{\ar@[]}
\newcommand{\add}{\operatorname{add}}
\newcommand{\modendt}{\operatorname{mod}\operatorname{End}_{\cat}(T)^\text{op}}
\newcommand{\ccf}{\tc\ast\shift\tc}
\newcommand{\ideal}{(\tc^\perp)}
\newcommand{\rideal}{(\rc^\perp)}
\newcommand{\sideal}{(\sc^\perp)}
\newcommand{\cof}{\rightarrowtail}
\newcommand{\fib}{\twoheadrightarrow}
\newcommand{\htpcof}{\mathrel{\begin{tikzpicture}[baseline=-0.9mm] \draw (-145:0.1) --(0,0) --(145:0.1) --cycle ; \draw[thin] (0,0) --(0.3,0) ; \draw (.3,0) --++(145:0.1) ; \draw (0.3,0) --++(-145:0.1) ;\end{tikzpicture}}}
\newcommand{\tperp}{\tc^\perp}
\newcommand{\Cof}{\mathcal{C}of}
\newcommand{\Fib}{\mathcal{F}ib}
\newcommand{\sq}{\mathrel{\!\begin{tikzpicture} \draw (0,0) rectangle (0.2,0.2) ; \end{tikzpicture}\!}}
\newcommand{\hsq}{\mathrel{\begin{tikzpicture} \draw (0,0) rectangle (0.2,0.2) ; \fill (0,0) --(0.2,0.2) --(0,0.2) --cycle ;\end{tikzpicture}}}
\newcommand{\bsq}{\mathrel{\begin{tikzpicture} \draw (0,0) rectangle (0.2,0.2) ; \fill (0,0) --(0.2,0.2) --(0.2,0) --cycle ;\end{tikzpicture}}}
\newcommand{\ho}{\operatorname{Ho}}
\newcommand{\hoc}{\ho \cat}
\newcommand{\field}{\mathds{k}}
\title[From triangulated to module categories via homotopical algebra]{From triangulated categories to module categories via homotopical algebra}
\author[Palu]{Yann Palu}
\address{
LAMFA, Facult\'e des sciences, 
33 rue Saint-Leu, 
80039 Amiens Cedex 1,
FRANCE
}
\email{yann.palu@u-picardie.fr}
\begin{document}

\begin{abstract}
The category of modules over the endomorphism algebra of a rigid object
in a Hom-finite triangulated category $\cat$ has been given two different descriptions:
On the one hand, as shown by Osamu Iyama and Yuji Yoshino, it is equivalent to an ideal quotient of
a subcategory of $\cat$. On the other hand, Aslak Buan and Robert Marsh
proved that this module category is also equivalent to some localisation of $\cat$.
In this paper, we give a conceptual interpretation, inspired from homotopical algebra, of this double description.
Our main aim, yet to be acheived, is to generalise Buan-Marsh's result to the case of Hom-infinite cluster categories.
We note that, contrary to the more common case where a model category is a module category whose homotopy category
is triangulated, we consider here some triangulated categories whose homotopy categories are module categories.
\end{abstract}

\maketitle

\tableofcontents

\section*{Introduction}

Our aim in this paper is to give a homotopical algebraic interpretation of a result
of Aslak Buan and Robert Marsh \cite{BMloc1} (see also \cite{BMloc2, BeligiannisRigid})
on some localisations of triangulated categories associated
with rigid objects.

\subsection*{Endomorphism algebras of rigid objects}
The interest in rigid objects in module categories and in triangulated categories arose 
in tilting theory. The study of cluster algebras revived this interest: Rigid objects
in cluster categories categorify the cluster monomials of an associated cluster algebra.

Let $\cat$ be a triangulated category with suspension functor $\shift$.
An object $T$ in $\cat$ is called rigid if it has no non-trivial self-extensions, i.e. if $\cat(T,\shift T) = 0$.
A rigid object $T$ is called maximal rigid if moreover $T\oplus X$ is rigid if and only if $X\in\add T$,
where $\add T$ is the full subcategory of $\cat$ whose objects are the direct summands of finite directs sums of
copies of $T$. One nice feature of rigid objects in a triangulated category
is that all the information concerning their representation theory is contained
in the triangulated category:
\begin{theo}[Buan--Marsh--Reiten~\cite{BMR1}]\label{theorem: BMR1}
Let $\field$ be a field, let $Q$ be an acyclic quiver, and let $\cat = D^\text{b}(\field Q) / \tau^{-1}[1]$
be the associated cluster category~\cite{BMRRT}.
If $T$ is a maximal rigid object of $\cat$, then the functor $\cat(T,-)$ induces an equivalence
of categories: \[\cat / (\shift T) \stackrel{\simeq}{\gfl} \modendt,\]
where $(\shift T)$ denotes the ideal of morphisms factoring through $\add\shift T$.
\end{theo}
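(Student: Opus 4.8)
The plan is to show that the functor $F := \cat(T,-)\colon \cat \to \modendt$ is full and dense, and that its kernel — the ideal of morphisms $f$ with $Ff = 0$ — is precisely $(\shift T)$; the asserted equivalence then follows formally, since a full dense functor with kernel $\mathcal I$ induces an equivalence $\cat/\mathcal I \simeq \modendt$. Write $\Lambda := \End_\cat(T)$. Since $\cat$ is Hom-finite over $\field$, $F$ takes values in $\modendt = \operatorname{mod}\Lambda^{\mathrm{op}}$, and by Yoneda $F$ restricts to an equivalence $\add T \xrightarrow{\ \simeq\ } \operatorname{proj}\Lambda^{\mathrm{op}}$ carrying $T$ to the rank-one free module; in particular $F$ is fully faithful on $\add T$.

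The structural input I would isolate first is that a maximal rigid object of the cluster category is \emph{cluster-tilting}: $\add T = \{X \in \cat : \cat(T,\shift X) = 0\} = \{X \in \cat : \cat(X,\shift T) = 0\}$, which relies on the $2$-Calabi--Yau property of $\cat$ (cf.\ \cite{BMRRT}). Combined with functorial finiteness of $\add T$ in the Hom-finite Krull--Schmidt category $\cat$, this yields for every object $X$ a triangle $T_1^X \xrightarrow{u} T_0^X \xrightarrow{p} X \to \shift T_1^X$ with $T_0^X, T_1^X \in \add T$: take $p$ to be a right $\add T$-approximation of $X$, complete it to a triangle, and check that the third term lies in $\add T$ by applying $\cat(-,\shift T)$ and using that $p$ is an approximation. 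Applying $F$ and using rigidity ($\cat(T,\shift T_1^X) = 0$) produces an exact sequence $FT_1^X \xrightarrow{Fu} FT_0^X \xrightarrow{Fp} FX \to 0$, i.e.\ a projective presentation of $FX$ in $\modendt$; in particular $Fp$ is an epimorphism.

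Granting these presentations, density and fullness are formal. For density, given $M \in \modendt$ pick a projective presentation $P_1 \xrightarrow{\pi} P_0 \to M \to 0$, lift $\pi$ through the equivalence $\add T \simeq \operatorname{proj}\Lambda^{\mathrm{op}}$ to a morphism $T_1 \to T_0$ in $\add T$, complete it to a triangle $T_1 \to T_0 \to X \to \shift T_1$, and read off $FX \cong \coke(F\pi) \cong M$. For fullness, given $g\colon FX \to FY$, lift $g$ to a chain map between the chosen projective presentations of $FX$ and $FY$; this chain map restricts to a commutative square $P_1^X \to P_1^Y$ over $P_0^X \to P_0^Y$, which by full faithfulness of $F$ on $\add T$ is the image of a commutative square $T_1^X \to T_1^Y$ over $T_0^X \to T_0^Y$, hence by $\mathrm{TR}3$ extends to a morphism of triangles with some $f\colon X \to Y$ in the middle. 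Then $Ff$ and $g$ agree after pre-composition with the epimorphism $Fp$, so $Ff = g$.

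The delicate point, and the one I expect to be the main obstacle, is the kernel computation: $Ff = 0$ if and only if $f$ factors through $\add\shift T$. The implication "$\Leftarrow$" is trivial since $F(\shift T) = \cat(T,\shift T) = 0$. For "$\Rightarrow$", suppose $f\colon X \to Y$ has $Ff = 0$ and compose with the approximation triangle $T_1^X \xrightarrow{u} T_0^X \xrightarrow{p} X \xrightarrow{\eps} \shift T_1^X$: then $F(fp) = (Ff)(Fp) = 0$, and a morphism out of an object of $\add T$ that $F$ annihilates must vanish, because the comparison map $\cat(T_0^X, Y) \to \operatorname{Hom}_{\Lambda}(FT_0^X, FY)$ is injective (check it for $T_0^X = T^{\oplus n}$, where it is the identity, and pass to summands). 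Hence $fp = 0$, so $f$ factors through $\eps$, i.e.\ through $\shift T_1^X \in \add\shift T$. This identifies $\ke F$ with $(\shift T)$ and finishes the proof. What makes the whole argument work, and what would need the most care, is the structural step of the second paragraph — the existence of the $\add T$-triangles $T_1^X \to T_0^X \to X \to \shift T_1^X$ — which is exactly where the specific geometry of the cluster category ($2$-Calabi--Yau duality and functorial finiteness of $\add T$) is essential; everything downstream is homological bookkeeping in $\modendt$.
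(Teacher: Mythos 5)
The paper only states this result as a citation to Buan--Marsh--Reiten and does not reprove it, so there is no in-text argument to compare against; your proof is correct and is essentially the standard one, also visible as the template for the Iyama--Yoshino generalisation (Theorem~\ref{theorem: IY}). The architecture is exactly right: reduce to showing $F=\cat(T,-)$ is full and dense with kernel $(\shift T)$; produce, for each $X$, an $\add T$-presentation triangle $T_1^X\to T_0^X\to X\to\shift T_1^X$ from the cluster-tilting property of the maximal rigid $T$; then read density, fullness (via a $\mathrm{TR3}$ lift against these presentations) and the kernel identification directly off the triangles, using the Yoneda identification $\add T\simeq\operatorname{proj}\operatorname{End}_{\cat}(T)^{\mathrm{op}}$. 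One small remark on the step where you check $T_1^X\in\add T$: the most direct route is to apply $\cat(T,-)$ rather than $\cat(-,\shift T)$ to the triangle, since the approximation condition on $p$ immediately makes $\cat(T,T_0^X)\to\cat(T,X)$ surjective, whence $\cat(T,\shift T_1^X)=0$. Your $\cat(-,\shift T)$ variant also closes up, but only after converting the approximation condition into injectivity of $p^{\ast}$ on $\cat(-,\shift^{2}T)$ via $2$-Calabi--Yau duality, so it invokes the duality twice where the usual version invokes it once (merely to equate the two defining conditions of cluster-tilting). Everything else, in particular the factorisation of $f$ through the connecting morphism once $fp_X=0$, is sound.
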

We note that, if $X\in\cat$, then the $\modendt$-module structure on the finite dimensional $\field$-vector space $\cat(T,X)$
is given by precomposition.  
Theorem~\ref{theorem: BMR1} was generalised in various directions; see for instance \cite{KZ}, \cite{KR1}, \cite{IY}.
In particular, it is shown in~\cite{KZ} that the abelian structure on $\cat_Q/(\shift T)$
can be described in terms of the triangulated structure~\cite{Kellertriorbcat} of $\cat_Q$.

Let us state a generalisation due to Osamu Iyama and Yuji Yoshino \cite{IY}, which was the starting point for
the work of Aslak Buan and Robert Marsh in \cite{BMloc1}.
\begin{theo}[Iyama--Yoshino~\cite{IY}]\label{theorem: IY}
Let $\field$ be a field and let $\cat$ be a $\field$-linear, Hom-finite, Krull--Schmindt,
triangulated category with some rigid object $T$. The full subcategory of $\cat$
whose objects are cones of morphisms in $\add T$ is denoted by $T\ast\shift T$.
Then the functor $\cat(T,-)$ induces an equivalence of categories:
\[T\ast\shift T / (\shift T) \stackrel{\simeq}{\gfl}\modendt.\]
\end{theo}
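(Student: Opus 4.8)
The plan is to write $\Lambda := \End_\cat(T)^{\mathrm{op}}$, so that $\modl = \modendt$, and to study the functor $F := \cat(T,-)$, which by precomposition takes values in $\Lambda$-modules. I would first check that $F$ factors through the quotient, and then prove that the induced functor $\overline{F}\colon T\ast\shift T/(\shift T) \to \modl$ is dense, faithful and full. The only properties of $F$ and of $\cat$ really needed are: $F$ is homological; the restriction of $F$ to $\add T$ is an equivalence $\add T \xrightarrow{\simeq} \operatorname{proj}\Lambda$ (this is Yoneda, together with the fact that $\cat$ is Krull--Schmidt, so that $\add T$ is idempotent complete and every finitely generated projective $\Lambda$-module is of the form $\cat(T,T_0)$); and $T$ is rigid, which enters only through the vanishing $\cat(\add T,\shift\add T) = 0$.

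Several preliminary consequences would be recorded first. Since $\cat(T,\shift T') = 0$ for $T'\in\add T$, the functor $F$ annihilates every morphism factoring through $\add\shift T$, hence descends to $\overline{F}$. Moreover, for any $X \in T\ast\shift T$ one may fix a triangle $T_1 \xrightarrow{g} T_0 \xrightarrow{p} X \xrightarrow{h} \shift T_1$ with $T_0,T_1 \in \add T$; applying the homological functor $F$ and using $F(\shift T_1) = \cat(T,\shift T_1) = 0$ yields an exact sequence $F(T_1)\xrightarrow{F(g)} F(T_0)\xrightarrow{F(p)} F(X) \to 0$, i.e. a projective presentation of $F(X)$. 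In particular $F(X)$ is finitely presented, hence (as $\Lambda$ is finite-dimensional) lies in $\modl$, the map $F(p)$ is an epimorphism, and $F(X) \cong \coke F(g)$. Density is now immediate: given $M \in \modl$, take a projective presentation $P_1 \xrightarrow{\pi} P_0 \to M \to 0$, transport $\pi$ across $\add T \simeq \operatorname{proj}\Lambda$ to a morphism $g\colon T_1 \to T_0$ in $\add T$, and set $X := \operatorname{cone}(g) \in T\ast\shift T$; then $F(X)\cong\coke F(g)\cong M$.

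For faithfulness, suppose $f\colon X\to Y$ in $T\ast\shift T$ satisfies $F(f)=0$, and fix presentation triangles $T_1\xrightarrow{g}T_0\xrightarrow{p}X\xrightarrow{h}\shift T_1$ and $T_1'\xrightarrow{g'}T_0'\xrightarrow{q}Y\xrightarrow{h'}\shift T_1'$ as above. The composite $h'fp\colon T_0\to\shift T_1'$ is a morphism from $\add T$ to $\add\shift T$, hence zero by rigidity, so $fp$ factors as $fp = qs$ for some $s\colon T_0\to T_0'$. Applying $F$ gives $F(q)F(s) = F(f)F(p) = 0$, so $F(s)$ has image inside $\ke F(q) = \im F(g')$; since $F(T_0)$ is projective and $F(g')$ is an epimorphism onto $\im F(g')$, the map $F(s)$ lifts through $F(g')$, and because $F$ is fully faithful on $\add T$ this lift is $F(u)$ for a unique $u\colon T_0\to T_1'$ with $g'u = s$. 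Then $fp = qs = qg'u = 0$, so $f$ factors through $h$, i.e. $f$ lies in $(\shift T)$; hence $\overline{F}$ is faithful.

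Fullness is the step I expect to demand the most care, and is the only place where the triangulated axioms are used beyond the homological property of $F$. Given $\phi\colon F(X)\to F(Y)$ in $\modl$ and presentation triangles as above, projectivity of $F(T_0)$ lets me lift $\phi\circ F(p)$ along the epimorphism $F(q)$; by full faithfulness on $\add T$ this lift is $F(s)$ for some $s\colon T_0\to T_0'$ with $F(q)F(s) = \phi F(p)$. Then $F(q)F(s)F(g) = \phi F(pg) = 0$ (as $pg = 0$), so $F(s)F(g)$ lands in $\ke F(q) = \im F(g')$; projectivity of $F(T_1)$ and full faithfulness on $\add T$ then produce $t\colon T_1\to T_1'$ with $g't = sg$. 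Thus $(t,s)$ is a morphism from the first presentation triangle to the second, and axiom (TR3) completes it to a morphism of triangles whose third component is some $f\colon X\to Y$ with $fp = qs$. Since $F(p)$ is an epimorphism and $F(f)F(p) = F(q)F(s) = \phi F(p)$, we conclude $F(f) = \phi$, so $\overline{F}$ is full and therefore an equivalence. The main obstacle, as indicated, is the bookkeeping that turns a homomorphism of projective presentations in $\modl$ into a genuine morphism of triangles so that (TR3) applies; a secondary point to watch throughout is that rigidity is invoked only through $\cat(\add T,\shift\add T) = 0$ --- to kill $(\shift T)$, to obtain the projective presentations, and to produce the lift $s$ in the faithfulness step.
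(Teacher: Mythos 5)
The paper does not actually prove this statement: Theorem~\ref{theorem: IY} is quoted from Iyama--Yoshino~\cite{IY} (and special cases appear in~\cite{BMR1}), so there is no in-paper proof to compare against. That said, your argument is correct, and it is essentially the standard argument for results of this type. The structure is sound: you use that $F=\cat(T,-)$ is homological and restricts to an equivalence $\add T\xrightarrow{\simeq}\operatorname{proj}\Lambda$ (Yoneda plus Krull--Schmidt for idempotent completeness), and that rigidity enters only through $\cat(\add T,\shift\add T)=0$. The projective presentations $F(T_1)\to F(T_0)\to F(X)\to 0$ obtained from defining triangles $T_1\to T_0\to X\to\shift T_1$ give density immediately; in the faithfulness step, $h'fp=0$ by rigidity yields the factorisation $fp=qs$, and then $g'u=s$ forces $fp=0$, whence $f$ factors through $h\in(\shift T)$; in the fullness step, projectivity of $F(T_0)$, $F(T_1)$ plus full faithfulness on $\add T$ produce a morphism of presentations, and (TR3) completes it to a morphism of triangles hitting $\phi$ after applying $F$. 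All the small verifications (that $\ke F(q)=\im F(g')$ from exactness, that $F(p)$ is epi, that $fp=0$ implies $f$ factors through $h$) are correctly invoked. The only cosmetic point is that one should perhaps say explicitly that the lift of $F(s)$ is along the surjection $F(g')\colon F(T_1')\twoheadrightarrow\im F(g')$, but this is exactly what you intend.
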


From this result arise the following questions: What are the properties of the functor $\cat(T,-) : \cat \fl \modendt$?
Is it possible to describe the module category $\modendt$ from $\cat$, without computing the subcategory $T\ast\shift T$?

The answer given by Aslak Buan and Robert Marsh is that $\cat(T,-)$ is a localisation functor.

\subsection*{Localisations}

The following situation arises in various fields of mathematics.
Assume that $\cat$ is a category with somme class $\wc$ of morphisms, called weak equivalences.
If there is a weak equivalence from $X$ to $Y$, one would like to think $X$ and $Y$ as being isomorphic.
For example, $\cat$ might be the category of complexes of modules over some ring, and $\wc$ the class
of quasi-isomorphisms (morphisms inducing isomorphisms on homologies).
Or $\cat$ might be the category of compactly generated (weak Hausdorff) topological
spaces, with $\wc$ the class of weak equivalences
(morphisms inducing bijections on homotopy groups, for all choices of a base point).

There is a method ~\cite{GabrielZisman} for constructing a new category $\cat[\wc^{-1}]$ having the same objects
as $\cat$ but where morphisms in $\wc$ become isomorphisms.

\begin{defi}
A localisation of $\cat$ at $\wc$ is the datum of a functor $\cat\stackrel{L}{\gfl}\cat[\wc^{-1}]$
with the property that, for any functor $\cat\stackrel{F}{\gfl}\dc$ such that $Fw$ is an isomorphism in $\dc$
whenever $w$ is in $\wc$, there is a unique functor $\cat[\wc^{-1}] \stackrel{G}{\gfl}\dc$ such that $GL = F$:
\[\xymatrix{
\cat \dr^F \bas_L & \dc \\
\cat[\wc^{-1}] \hdrp_G &
}\]
\end{defi}

We note that the diagram above is required to commute ``on the nose'' and not only up to some natural isomorphism.
In particular, the category $\cat[\wc^{-1}]$, if it exists, is unique up to isomorphism (and not just up to equivalence).

The receipe given in~\cite{GabrielZisman} for constructing $\cat[\wc^{-1}]$ can be sketched as follows: Consider all words
on (composable) morphisms of $\cat$ and formal inverses $w^{-1}$ to all morphisms $w$ in $\wc$, up to the equivalence relation
obtained by identifying subwords of the form $fg$ and $f\circ g$, $1f$ or $f1$ and $f$, and $ww^{-1}$ or $w^{-1}w$ and $1$.
The ``category'' with objects the objects of $\cat$, with morphisms the equivalence classes of words, and with composition
induced by concatenation of words is a localisation of $\cat$ at $\wc$. 
Unfortunately, there is some set-theoretic issue with this construction: The collection of all morphisms between two objects might
form a proper class rather than a set. As shown by Theorem~\ref{theorem: BMloc1},
this issue does not arise in the setup considered in~\cite{BMloc1}.

Let $\cat$ be, as in Theorem~\ref{theorem: IY}, a $\field$-linear, Hom-finite, Krull--Schmidt, triangulated category,
and let $T\in\cat$ be rigid. We write $T^\perp$ for the full subcategory of $\cat$
whose objects $X$ satisfy $\cat(T,X) = 0$.
Let $\sc$ be the class of morphisms $X\stackrel{f}{\gfl}Y$ such that, form some (equivalently, any)
triangle $Z\stackrel{g}{\gfl}X\stackrel{f}{\gfl}Y\stackrel{h}{\gfl}\shift Z$, both morphisms $g$ and $h$ belong to the ideal $(T^\perp)$
of morphisms factoring through $T^\perp$.

\begin{theo}[Buan--Marsh~\cite{BMloc1}]\label{theorem: BMloc1}
Let $\cat$ be a $\field$-linear, Hom-finite, Krull--Schmidt, triangulated category with a Serre functor,
and let $T\in\cat$ be rigid.
\begin{enumerate}
 \item For any morphism $s\in\sc$, $\cat(T,s)$ is an isomorphism in $\modendt$.
 \item The functor $\cat[\sc^{-1}] \stackrel{G}{\gfl}\modendt$ induced from $\cat(T,-)$ is an equivalence of categories.
\end{enumerate}
\end{theo}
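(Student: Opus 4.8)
The plan is as follows. The first assertion is straightforward: fixing a triangle $Z\xrightarrow{g}X\xrightarrow{s}Y\xrightarrow{h}\shift Z$ witnessing that $s\in\sc$, the hypothesis that $g$ and $h$ factor through objects $W$ with $\cat(T,W)=0$ gives $\cat(T,g)=0=\cat(T,h)$, so the long exact sequence for the cohomological functor $\cat(T,-)$ forces $\cat(T,s)$ to be both injective and surjective; being in addition $\End_\cat(T)^{\mathrm{op}}$-linear (the module structure is precomposition), it is an isomorphism in $\modendt$. Hence $\cat(T,-)$ inverts $\sc$, the universal property produces $G$, and $G$ is essentially surjective because $L$ is the identity on objects, $GL=\cat(T,-)$, and by Theorem~\ref{theorem: IY} the essential image of $\cat(T,-)$ already fills $\modendt$ on $T\ast\shift T$. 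Everything then reduces to proving $G$ fully faithful.

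The key technical input is that $\sc$ admits a calculus of right fractions, so that a morphism of $\cat[\sc^{-1}]$ is a roof $X\xleftarrow{s}Z\xrightarrow{a}Y$ with $s\in\sc$. This should follow, via the standard homotopy pullback/pushout manipulations, from the functorial finiteness of $T^\perp$ in $\cat$, which keeps the connecting maps of the mediating triangles inside the ideal $(T^\perp)$. Covariant finiteness is automatic: if $P_0\to X$ is a right $\add T$-approximation and $X_1\to P_0\to X\xrightarrow{b}\shift X_1$ a triangle, then $\cat(T,\shift X_1)=0$ and $b\colon X\to\shift X_1$ is a left $T^\perp$-approximation, since any $X\to V$ with $V\in T^\perp$ kills $P_0\to X$ (as $\cat(P_0,V)=0$) and hence factors through $b$. \emph{Contravariant} finiteness of $T^\perp$ is precisely where the Serre functor $\mathbb{S}$ enters: Serre duality $\cat(X,\mathbb{S}T)\cong D\,\cat(T,X)$ ($D$ the $\field$-dual) identifies the modules $\cat(T,E)$, $E\in\add\mathbb{S}T$, as the injective objects of $\modendt$ and the morphisms $X\to E$ with the module maps $\cat(T,X)\to\cat(T,E)$; realising an injective envelope $\cat(T,X)\hookrightarrow\cat(T,E_0)$ as $\cat(T,u)$ for some $u\colon X\to E_0$ with $E_0\in\add\mathbb{S}T$, and noting that $\cat(T,\susm E_0)=0$ by rigidity of $T$, one finds that the cocone of $u$ lies in $T^\perp$ and is a right $T^\perp$-approximation of $X$. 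This step --- establishing the calculus of fractions, hence ultimately the contravariant finiteness of $T^\perp$ via the Serre functor --- is the one I expect to be the main obstacle; the Serre functor plays no other role (in particular Theorem~\ref{theorem: IY} needs none).

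The second ingredient is that every object $X$ admits a morphism $c\colon M\to X$ in $\sc$ with $M\in T\ast\shift T$. To construct it, choose right $\add T$-approximations $P_0\to X$ and $P_1\to X_1$ and triangles as above, and apply the octahedral axiom to the composite $P_1\to X_1\to P_0$: one gets a triangle $Y\xrightarrow{\alpha}M\xrightarrow{c}X\xrightarrow{\beta}\shift Y$ in which $M$ is the cone of $P_1\to P_0$ (so $M\in T\ast\shift T$) and $Y$ the cone of $P_1\to X_1$ (so $\cat(T,Y)=0$, whence $\alpha\in(T^\perp)$). Moreover $\cat(T,c)$ is onto --- the surjection $\cat(T,P_0)\to\cat(T,X)$ factors through it --- so $\cat(T,\beta)=0$, which forces $\beta\circ(P_0\to X)=0$ because $P_0\in\add T$; hence $\beta$ factors through the left $T^\perp$-approximation $X\to\shift X_1$, so $\beta\in(T^\perp)$ and $c\in\sc$.

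Full faithfulness of $G$ then follows, using these two ingredients and Theorem~\ref{theorem: IY}. For fullness: a module morphism $\cat(T,X)\to\cat(T,Y)$ transports, along the isomorphisms induced by $\sc$-morphisms as above, to a module morphism between $\cat(T,M_X)$ and $\cat(T,M_Y)$ with $M_X,M_Y\in T\ast\shift T$; this lifts to some $h\colon M_X\to M_Y$ by Iyama--Yoshino, and $G(L(h))$ recovers the original morphism. For faithfulness: a morphism of $\cat[\sc^{-1}]$ killed by $G$ can, after the same reduction and after replacing the apex of its roof by an object of $T\ast\shift T$ (second ingredient), be written $M_X\xleftarrow{s}M_Z\xrightarrow{a}M_Y$ with all objects in $T\ast\shift T$ and $s\in\sc$; then $\cat(T,a)=0$, so $a$ factors through $\add\shift T\subseteq T^\perp$ by Iyama--Yoshino, and $L(a)=0$ since $W\in T^\perp$ implies $(W\to0)\in\sc$, whence $L(W)\cong0$. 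So the roof vanishes. Therefore $G$ is fully faithful on $T\ast\shift T$, hence fully faithful (every object of $\cat[\sc^{-1}]$ being isomorphic to one coming from $T\ast\shift T$), and, being essentially surjective, an equivalence.
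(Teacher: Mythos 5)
The paper does not itself prove this theorem: it is stated as an attributed result of Buan--Marsh and merely cited. The paper's own contribution is a \emph{different} route to the same conclusion, via the left-weak model structure of Theorem~\ref{theorem: main} and Corollary~\ref{corollary: main}, which produces the equivalence $T\ast\shift T/(\shift T)\simeq\cat[\wc^{-1}]$ without any calculus of fractions and with no Serre functor --- only contravariant finiteness of $\add T$, which is automatic here. Your proposal instead reconstructs the original Buan--Marsh argument, so the comparison below is against that.

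Part~(1) is correct: rotating the triangle and applying the cohomological functor $\cat(T,-)$, the hypothesis $g,h\in(T^\perp)$ kills both flanking maps, and exactness forces $\cat(T,s)$ to be bijective, hence an isomorphism of $\End_\cat(T)^{\mathrm{op}}$-modules. Your construction of the cofibrant replacement (your ``second ingredient'', i.e.\ Lemma~\ref{lemma: key lemma}) is also correct: it uses only right $\add T$-approximations and the octahedron, and in particular needs no Serre functor, contrary to your framing. The Serre-duality argument showing $T^\perp$ is contravariantly finite is likewise sound.

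The genuine gap is the assertion that $\sc$ admits a calculus of right fractions \emph{in $\cat$}. You flag this as the main obstacle but do not carry it out, and in fact it fails in general: as the paper itself remarks (end of Section~\ref{section: w-model str from rigids}, under ``Calculus of fractions''), what Buan--Marsh~\cite{BMloc2} and Beligiannis~\cite{BeligiannisRigid} prove is that the \emph{image} of $\sc$ in the additive quotient $\cat/(T^\perp)$ admits a calculus of fractions, not $\sc$ itself in $\cat$. The obstruction is in the Ore condition: in a homotopy pull-back of $s\in\sc$ along a morphism $a$, the induced map from the weak kernel $K$ of $s$ into the apex $W$ of the pull-back need not factor through $T^\perp$ even though $K\fl X$ does. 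Since both your fullness and (especially) your faithfulness arguments rest on representing morphisms of $\cat[\sc^{-1}]$ by a single roof whose apex may then be moved into $T\ast\shift T$, this missing step undermines the whole of part~(2). To repair it one must either descend to $\cat/(T^\perp)$ as in \cite{BMloc2,BeligiannisRigid}, or replace the calculus-of-fractions step by a direct comparison of $\cat[\sc^{-1}]$ with $T\ast\shift T/(\shift T)$ --- which is exactly what the paper's homotopical machinery delivers, and also explains why no Serre functor is ultimately required.
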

In particular, the localisation of $\cat$ at $\sc$ exists: The construction of~\cite{GabrielZisman}
is a category.
A key lemma in the proof of Theorem~\ref{theorem: BMloc1} is:
\begin{lemma}[Buan--Marsh~\cite{BMloc1}]\label{lemma: key lemma}
Let $X\in\cat$. Then there is a triangle $Y\stackrel{g}{\gfl}A\stackrel{f}{\gfl}X\stackrel{h}{\gfl}\shift Y$, with
$A\in T\ast\shift T$, $Y\in T^\perp$ and $h\in(T^\perp)$.
In particular, the modules $\cat(T,A)$ and $\cat(T,X)$ are isomorphic.
\end{lemma}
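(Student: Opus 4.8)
The plan is to produce $A$ by iterating right $\add T$-approximations twice and then reading off the required triangle from the octahedral axiom. Since $\cat$ is $\field$-linear and Hom-finite, $\add T$ is contravariantly finite, so I would first choose a right $\add T$-approximation $p_0\colon T_0\to X$ and complete it to a triangle $Y_0\xrightarrow{e_0}T_0\xrightarrow{p_0}X\xrightarrow{d_0}\shift Y_0$. Applying the homological functor $\cat(T,-)$, surjectivity of $\cat(T,p_0)$ gives $\cat(T,d_0)=0$, and $\cat(T,\shift T_0)=0$ because $T$ is rigid and $T_0\in\add T$; the long exact sequence then forces $\cat(T,\shift Y_0)=0$, i.e.\ $\shift Y_0\in T^\perp$. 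This object $\shift Y_0$ is exactly the one through which the connecting morphism $h$ will factor.

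Next I would take a right $\add T$-approximation $p_1\colon T_1\to Y_0$ and apply the octahedral axiom to the composite $T_1\xrightarrow{p_1}Y_0\xrightarrow{e_0}T_0$. Since $X$ is the cone of $e_0$, this yields a distinguished triangle $Y\to A\xrightarrow{f}X\xrightarrow{h}\shift Y$ in which $A$ is the cone of $e_0 p_1\colon T_1\to T_0$ --- so $A\in T\ast\shift T$ by the very definition of $T\ast\shift T$ --- and $Y$ is the cone of $p_1$, sitting in a triangle $T_1\xrightarrow{p_1}Y_0\xrightarrow{i_1}Y\to\shift T_1$. Surjectivity of $\cat(T,p_1)$ together with $\cat(T,\shift T_1)=0$ shows $\cat(T,Y)=0$, so $Y\in T^\perp$. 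Moreover the octahedron identifies $h$ with the composite $X\xrightarrow{d_0}\shift Y_0\xrightarrow{\shift i_1}\shift Y$, which factors through $\shift Y_0\in T^\perp$; hence $h\in(T^\perp)$, as required.

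The last assertion is then formal: rotating to the triangle $\susm X\xrightarrow{-\susm h}Y\to A\xrightarrow{f}X$ and applying $\cat(T,-)$, one has $\cat(T,Y)=0$ and $\cat(T,h)=\cat(T,\shift i_1)\circ\cat(T,d_0)=0$, so the long exact sequence collapses to $0\to\cat(T,A)\xrightarrow{\cat(T,f)}\cat(T,X)\to 0$ and $\cat(T,f)$ is an isomorphism of $\End_{\cat}(T)^{\text{op}}$-modules between $\cat(T,A)$ and $\cat(T,X)$.

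I expect the one delicate point to be the bookkeeping inside the octahedral axiom --- namely that the connecting morphism of the fourth triangle is precisely $(\shift i_1)\circ d_0$, and not merely equal to it up to the slack in the axiom. This can be pinned down by spelling out Verdier's $3\times 3$ diagram, or circumvented altogether by \emph{defining} $A$ to be the cocone of the morphism $h:=(\shift i_1)\circ d_0$ (which makes $h\in(T^\perp)$ and $Y\cong\operatorname{cone}(p_1)\in T^\perp$ immediate) and then invoking the octahedron only to check that this cocone lies in $T\ast\shift T$. Everything else reduces to routine diagram chases in the long exact sequences obtained by applying $\cat(T,-)$.
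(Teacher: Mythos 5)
The paper does not prove this lemma itself --- it is cited as \cite[Lemma~3.3]{BMloc1} --- so there is no in-text proof to compare against, but your reconstruction is correct and is essentially the argument one finds in Buan--Marsh. Your two-step approximation scheme (a right $\add T$-approximation of $X$, then a right $\add T$-approximation of $Y_0$, glued by the octahedron) is exactly the expected route, and all the small verifications ($\shift Y_0\in T^\perp$, $Y\in T^\perp$, $A\in T\ast\shift T$, the isomorphism $\cat(T,A)\cong\cat(T,X)$) go through as you wrote them.

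The one point you flag as delicate is in fact not a genuine issue. The octahedral axiom, in its standard ``rotation of cones'' form, asserts precisely that if $u\colon U\to V$ and $v\colon V\to W$ have cones $C_u$, $C_v$, $C_{vu}$ sitting in triangles $U\xrightarrow{u}V\xrightarrow{i}C_u\to\shift U$ and $V\xrightarrow{v}W\to C_v\xrightarrow{\ell}\shift V$, then there is a distinguished triangle $C_u\to C_{vu}\to C_v\to\shift C_u$ whose \emph{connecting map} is $(\shift i)\circ\ell$. Applied to $u=p_1$, $v=e_0$, this gives the connecting map $h=(\shift i_1)\circ d_0$ on the nose, not merely up to the slack in the axiom. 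Your proposed fallback (declaring $h:=(\shift i_1)\circ d_0$, taking its cocone, and invoking the octahedron only to see that this cocone lies in $T\ast\shift T$) is a harmless reorganisation but is not required. Everything else in your argument --- contravariant finiteness of $\add T$ from Hom-finiteness and Krull--Schmidt, the vanishing of $\cat(T,d_0)$ and $\cat(T,\shift T_i)$, and the collapse of the long exact sequence to the isomorphism $\cat(T,f)\colon\cat(T,A)\xrightarrow{\sim}\cat(T,X)$ of $\End_\cat(T)^{\mathrm{op}}$-modules --- is correct.
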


Under the assumptions of Theorem~\ref{theorem: BMloc1}, we thus obtain two equivalent categories.
The first one is the localisation of $\cat$ at some class of morphisms $\sc$. The second one
is the full subcategory $T\ast\shift T$ of $\cat$ where morphisms are considered up to some equivalence relations
(two morphisms $f$ and $g$ are equivalent if $f-g$ factors through $\add\shift T$).
This is reminiscent to the theory of model categories~\cite{QuillenModel}.
Our aim is to make this analogy more precise: We will give some homotopical algebraic interpretation of Theorem~\ref{theorem: BMloc1},
and of Lemma~\ref{lemma: key lemma}. Our main motivation for pushing this analogy farther is the hope that
it might provide a tool allowing for a generalisation of Theorem~\ref{theorem: BMloc1}
including the case of Hom-infinite cluster categories (\cite{Acqw}, \cite{Plamondon1}).

\subsection*{Model categories}
Model categories, which axiomatise homotopy theory,
were introduced by Daniel G. Quillen in~\cite{QuillenModel}.
Let $\cat$ be a category and $\wc$ a collection of morphisms to be inverted.
If $(\cat,\wc)$ can be endowed with a model category structure, then the localisation,
called $\hoc$, of $\cat$ at $\wc$ exists (and comes equipped with more structure).

This axiomatic version of homotopy theory was called \emph{homotopical algebra}
by Daniel G. Quillen, since it subsumes both homological algebra
(when $\cat$ is the category of complexes of modules over a ring,
$\wc$ is the class of quasi-isomorphisms, and $\hoc$ is the derived category)
and homotopy
(e.g. when $\cat$ is the category of compactly generated topological spaces,
$\wc$ is the class of weak equivalences, and $\hoc$ is the homotopy category of spaces).

Assume that $\cat$ has finite limits and colimits (some authors assume all small limits and colimits).
Then a model category structure on $\cat$ is the datum of three classes $\wc, \Fib, \Cof$ of morphisms,
called respectively weak equivalences, fibrations and cofibrations,
satisfying some set of axioms inspired from basic homotopy theory. The first two axioms
concern the stability properties of $\wc, \Fib, \Cof$, and the other two axioms
ensure that the three classes interact nicely. More explicitely:
\begin{enumerate}
 \item The weak equivalences have the two-out-of-three property: For any composable $f$ and $g$, if
any two of $f,g$ and $gf$ are weak equivalences, then so is the third.
 \item The classes $\wc, \Fib$ and $\Cof$ contain all identities, are closed under compositions and under
taking retracts (in the category of morphisms of $\cat$).
 \item Lifting properties: $(\wc\cap\Cof)\, \square\, \Fib$ and $\Cof\, \square\, (\Fib\cap\wc)$.
 \item Factorisations: Any morphism belongs both to $\Fib\circ(\wc\cap\Cof)$
and to $(\Fib\cap\wc)\circ\Cof$.
\end{enumerate}
The notation $\square$ in Axiom (3) has the following meaning: Write $f\,\square\, g$ if,
for any commutative square
\[\xymatrix{
X \dr^a \bas_f & X' \bas^g \\
Y \dr_b \hdrp^\al & Y',
}\]
there is a lift $\al$ such that $\al f = a$ and $g\al = b$.
By Axiom (4), any morphism $f$ admits two factorisations:
\begin{center}
\begin{tikzpicture}
\draw (0,0) node{$X$} ;
\draw[->] (.3,0) --(1.7,0) node[midway, above]{$f$} ;
\draw (2,0) node{$Y$} ;
\draw (1,-.7) node{$X'$} ;
\draw[>->] (.2,-.2) --(.7,-.7) node[sloped, midway, above]{$\sim$} node[pos=.3, below]{$i$} ;
\draw[->>] (1.3,-.7) --(1.8,-.2) node[pos=.7, below]{$p$};
\begin{scope}[xshift=5cm]
\draw (0,0) node{$X$} ;
\draw[->] (.3,0) --(1.7,0) node[midway, above]{$f$} ;
\draw (2,0) node{$Y$} ;
\draw (1,-.7) node{$Y'$} ;
\draw[>->] (.2,-.2) --(.7,-.7) node[pos=.3, below]{$j$};
\draw[->>] (1.3,-.7) --(1.8,-.2) node[sloped, midway, above]{$\sim$} node[pos=.7, below]{$q$} ;
\end{scope}
\end{tikzpicture}
\end{center}
where $i$ is a cofibration and a weak equivalence, $p$ is a fibration, $j$ is a cofibration
and $q$ is a fibration and a weak equivalence.
An object $X$ is \emph{fibrant} if the canonical morphism from $X$ to the terminal object $\ast$ of $\cat$
(which exists since $\cat$ has finite limits) is a fibration.
Dually, $A$ is \emph{cofibrant} if the canonical morphism from the initial object $\emptyset$ to $A$ is a cofibration.
By applying Axiom (4) to $X\fl\ast$ and $\emptyset\fl X$, every object $X$ is seen to be weakly equivalent
to some fibrant object and to some cofibrant object. These are called fibrant and cofibrant replacements.
Let $\cat_{cf}$ be the full subcategory of $\cat$ whose objects are both fibrant and cofibrant.
In any model category, one can define paths objects, cylinder objects and homotopies
thus giving an axiomatic version of the corresponding notions for topological spaces
(see Section~\ref{ssection: w-model} for detailed definitions).
We write $f\simeq_\text{htp} g$ if two morphisms $f$ and $g$ are homotopic.
\begin{theo}[Quillen~ \cite{QuillenModel}]\label{theorem: QuillenModel}
Let $\cat$ be a model category and let $\hoc$ be the localisation of $\cat$
at the class of weak equivalences. Then:
\begin{itemize}
 \item[(i)] For any $X,Y\in\cat_{cf}$, homotopy is an equivalence relation on $\cat(X,Y)$,
 compatible with composition.
 \item[(ii)] The inclusion of $\cat_{cf}$ into $\cat$ induces an equivalence of categories
\[\cat_{cf} / \!\simeq_\text{htp} \;\gfl \hoc.\] In particular $\hoc$ is a category.
\end{itemize}
\end{theo}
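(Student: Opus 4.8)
The plan is to follow Quillen's original argument, organised around the interplay between left and right homotopy.

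\textbf{Part (i).} First I would set up the two notions of homotopy. A \emph{left homotopy} between $f,g\colon X\fl Y$ is a morphism $H\colon\operatorname{Cyl}(X)\fl Y$ restricting to $f$ and $g$ along the two structure maps $X\fl\operatorname{Cyl}(X)$, where $X\sqcup X\cof\operatorname{Cyl}(X)\xrightarrow{\sim}X$ is an Axiom (4) factorisation of the fold map; dually a \emph{right homotopy} is a morphism $X\fl\operatorname{Path}(Y)$ for a factorisation $Y\xrightarrow{\sim}\operatorname{Path}(Y)\fib Y\times Y$ of the diagonal. The key lemmas, each obtained from the lifting Axiom (3) (together with (4) to produce enough cylinder and path objects), are: (a) if $X$ is cofibrant then left homotopy is an equivalence relation on $\cat(X,Y)$ for all $Y$; (b) if $Y$ is fibrant then right homotopy is an equivalence relation on $\cat(X,Y)$ for all $X$; (c) if $X$ is cofibrant and $Y$ is fibrant then left and right homotopy agree. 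Hence on $\cat_{cf}$ the relation $\simeq_\text{htp}$ is unambiguous and is an equivalence relation. For compatibility with composition, post-composing a left homotopy with $h\colon Y\fl Y'$ gives $f\simeq_\text{htp}g\Rightarrow hf\simeq_\text{htp}hg$ whenever the source is cofibrant, while pre-composing a right homotopy with $k\colon X'\fl X$ gives $f\simeq_\text{htp}g\Rightarrow fk\simeq_\text{htp}gk$ whenever the target is fibrant; on $\cat_{cf}$ both apply, so $\simeq_\text{htp}$ is a congruence and $\cat_{cf}/\!\simeq_\text{htp}$ is a category.

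\textbf{Part (ii).} The argument proceeds in three moves. \emph{First}, the localisation functor $\gamma\colon\cat\fl\hoc$ identifies homotopic maps: the structure map $\operatorname{Cyl}(X)\xrightarrow{\sim}X$ becomes invertible in $\hoc$, both inclusions $X\fl\operatorname{Cyl}(X)$ are sections of it and hence become equal after applying $\gamma$, so composing with a left homotopy yields $\gamma f=\gamma g$; restricting to $\cat_{cf}$ and using Part (i), this gives a functor $\overline{\gamma}\colon\cat_{cf}/\!\simeq_\text{htp}\ \fl\ \hoc$. \emph{Second}, I would prove a Whitehead-type lemma: a weak equivalence between objects of $\cat_{cf}$ is a homotopy equivalence, hence is sent to an isomorphism by the projection to $\cat_{cf}/\!\simeq_\text{htp}$; one factors it (Axiom (4)) as an acyclic cofibration followed by a fibration, which is acyclic by two-out-of-three, and treats the two resulting cases by direct lifting arguments (a section together with a homotopy from a lift to the identity). \emph{Third}, I would build a functor $\delta\colon\hoc\fl\cat_{cf}/\!\simeq_\text{htp}$: applying Axiom (4) to $\emptyset\fl X$ and then to $QX\fl\ast$, choose a cofibrant replacement $QX\xrightarrow{\sim}X$ and a fibrant replacement $QX\xrightarrow{\sim}RQX$, and set $\Phi X:=RQX\in\cat_{cf}$; a morphism $f$ lifts, successively through these acyclic (co)fibrations, to a morphism $RQf$ well defined up to homotopy, and Part (i) together with the fact that homotopic maps admit homotopic lifts shows $f\mapsto[RQf]$ is a well-defined functor $\cat\fl\cat_{cf}/\!\simeq_\text{htp}$ sending (by the Whitehead lemma and two-out-of-three) weak equivalences to isomorphisms, so the universal property of $\hoc$ produces $\delta$. \emph{Finally}, the replacement weak equivalences $RQX\xleftarrow{\sim}QX\xrightarrow{\sim}X$ assemble, once more via Part (i), into natural isomorphisms $\delta\overline{\gamma}\cong\operatorname{id}$ and $\overline{\gamma}\delta\cong\operatorname{id}$; since moreover every $X\in\cat$ is weakly equivalent to $\Phi X\in\cat_{cf}$, the functor $\overline{\gamma}$ is essentially surjective, hence an equivalence, and in particular $\hoc$ is a category.

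The hard part will be the second and third moves together: the replacements $Q$ and $R$ are not functorial on the nose, since the relevant lifts are only determined up to homotopy, so the real work is in showing that passing to $\cat_{cf}/\!\simeq_\text{htp}$ repairs this — precisely where one needs Part (i) (that $\simeq_\text{htp}$ is a congruence) and the ``homotopic maps have homotopic lifts'' lemma — in addition to the lifting arguments underlying the Whitehead-type lemma.
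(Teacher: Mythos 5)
Your proof plan is correct and follows essentially the same route as the paper's proof of the analogous Theorem~\ref{theorem: Quillen} for left-weak model categories, which the paper states is Hovey's proof: establish that left and right homotopy agree and form a congruence on $\cat_{cf}$, prove a Whitehead-type lemma that weak equivalences between fibrant-cofibrant objects are homotopy equivalences, and then assemble the equivalence via cofibrant and fibrant replacement together with the observation that homotopic maps have homotopic lifts. The one small stylistic difference is that the paper reaches its Whitehead lemma (Proposition~\ref{proposition: weq and htpeq}) by first proving the bijectivity of $h^\ast$ and $h_\ast$ on homotopy classes, with Ken Brown's lemma (Lemma~\ref{lemma: Ken Brown}) doing the bootstrapping from trivial (co)fibrations to arbitrary weak equivalences, whereas you propose the slightly more direct factor-and-lift argument; both are standard and lead to the same place.
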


There are two similar pictures coming from different setups in Theorem~\ref{theorem: BMloc1} and
Theorem~\ref{theorem: QuillenModel}:
\begin{center}
\begin{tikzpicture}
\draw (0,0) node{$\cat \ni T$ rigid} ;
\draw (-1.7,-2) node{$\frac{\ccf}{(\shift T)}$} ;
\draw (.7, -2) node{$\cat[\sc^{-1}]$} ;
\draw[->] (-1.1,-2) --(0,-2) node[midway, above]{$\simeq$} ;
\begin{scope}[shift={(-1,-.5)}, rotate=-120, scale=.8]
\draw (0,0) arc (0:180:.12 and .1) ;
\draw (.24,0) arc (0:-180:.12 and .1) ;
\draw (.48,0) arc (0:180:.12 and .1) ;
\draw (.72,0) arc (0:-180:.12 and .1) ;
\draw (.96,0) arc (0:180:.12 and .1) ;
\draw (1.2,0) arc (0:-180:.12 and .1) ;
\draw (1.32,.05) arc (90:180:.12 and .05) ;
\draw[->] (1.32,.05) --(1.5,.05) ;
\end{scope}
\begin{scope}[shift={(-.3,-.5)}, yscale=-1, rotate=60, scale=.8]
\draw (0,0) arc (0:180:.12 and .1) ;
\draw (.24,0) arc (0:-180:.12 and .1) ;
\draw (.48,0) arc (0:180:.12 and .1) ;
\draw (.72,0) arc (0:-180:.12 and .1) ;
\draw (.96,0) arc (0:180:.12 and .1) ;
\draw (1.2,0) arc (0:-180:.12 and .1) ;
\draw (1.32,.05) arc (90:180:.12 and .05) ;
\draw[->] (1.32,.05) --(1.5,.05) ;
\end{scope}
\begin{scope}[xshift=5cm]
\draw (0,0) node{$\cat$ model category} ;
\draw (-1.7,-2) node{$\cat_{cf}/\!\simeq_\text{htp}$} ;
\draw (.7, -2) node{$\cat[\wc^{-1}]$} ;
\draw[->] (-.8,-2) --(0,-2) node[midway, above]{$\simeq$} ;
\begin{scope}[shift={(-1,-.5)}, rotate=-120, scale=.8]
\draw (0,0) arc (0:180:.12 and .1) ;
\draw (.24,0) arc (0:-180:.12 and .1) ;
\draw (.48,0) arc (0:180:.12 and .1) ;
\draw (.72,0) arc (0:-180:.12 and .1) ;
\draw (.96,0) arc (0:180:.12 and .1) ;
\draw (1.2,0) arc (0:-180:.12 and .1) ;
\draw (1.32,.05) arc (90:180:.12 and .05) ;
\draw[->] (1.32,.05) --(1.5,.05) ;
\end{scope}
\begin{scope}[shift={(-.3,-.5)}, yscale=-1, rotate=60, scale=.8]
\draw (0,0) arc (0:180:.12 and .1) ;
\draw (.24,0) arc (0:-180:.12 and .1) ;
\draw (.48,0) arc (0:180:.12 and .1) ;
\draw (.72,0) arc (0:-180:.12 and .1) ;
\draw (.96,0) arc (0:180:.12 and .1) ;
\draw (1.2,0) arc (0:-180:.12 and .1) ;
\draw (1.32,.05) arc (90:180:.12 and .05) ;
\draw[->] (1.32,.05) --(1.5,.05) ;
\end{scope}
\end{scope}
\end{tikzpicture}
\end{center}

Our main result is motivated by this analogy.

\subsection*{Main result}
Let $\cat$ be a triangulated category and let $T\in\cat$ be rigid and contravariantly finite.
Let $\wc$ be the class of morphisms $X\stackrel{f}{\gfl}Y$ such that, for any
triangle $Z\stackrel{g}{\gfl}X\stackrel{f}{\gfl}Y\stackrel{h}{\gfl}\shift Z$, both morphisms $g$ and $h$ belong to the ideal $(T^\perp)$.

Consider $J:=\{0\fl\shift T\}$ and $I:=\bigcup\cat(R,A)$, where the union is taken over
a set of representatives for the isomorphism classes of objects $R\in\add T$ and $A\in T\ast\shift T$.
We define three classes of morphisms: $\Fib:=J^\square$, $w\Fib:=I^\square$ and $\Cof:=\prescript{\square}{}{w\Fib}$.
\begin{theo}\label{theorem: MainIntro}(See Theorem~\ref{theorem: main} for details)
Let $\cat$ be a triangulated category and let $T\in\cat$ be rigid and contravariantly finite. Then
the datum of ($\wc$, $\Fib$, $\Cof$) is \emph{almost} a model category structure on $\cat$.
Moreover:
\begin{itemize}
 \item[(i)] All objects are fibrant.
 \item[(ii)] An object is cofibrant if and only if it belongs to $T\ast\shift T$.
 \item[(iii)] Two morphisms of fibrant and cofibrant objects
 are homotopic if and only if their difference factors through $\add\shift T$.
\end{itemize}
\end{theo}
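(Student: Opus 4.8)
The plan is to establish Theorem~\ref{theorem: MainIntro} by verifying each axiom of a model structure for the triple $(\wc, \Fib, \Cof)$ built from the weak factorisation systems cofibrantly generated by $J$ and $I$, and then to identify the fibrant objects, cofibrant objects, and the homotopy relation. The backbone is the theory of weak factorisation systems on an additive/triangulated category: since $\cat$ is triangulated, mapping cones give a natural source of factorisations, and the small object argument (or rather its finite analogue, available because $T$ is contravariantly finite and all Hom-spaces are well-behaved) should produce the required $(\Cof, w\Fib)$ and $(\wc\cap\Cof, \Fib)$ factorisations. The word \emph{almost} in the statement signals that one of Quillen's axioms will fail or will only hold up to a caveat (most likely the full two-out-of-three for $\wc$, or the existence of genuine limits/colimits in $\cat$), so part of the work is to isolate precisely which axiom is problematic and to check that the remaining ones go through; I would follow the route of Theorem~\ref{theorem: main} and simply verify (1), (2), and the two lifting/factorisation properties (3),(4) one by one, flagging the defect.

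\textbf{Step 1: Lifting properties and the two weak factorisation systems.} By construction $\Fib = J^\square$ and $w\Fib = I^\square$, and $\Cof = \prescript{\square}{}{(w\Fib)}$, so $(\Cof, w\Fib)$ is automatically a weak factorisation system \emph{provided} the factorisation exists; for this I would invoke the small object argument, using that $J$ and $I$ are \emph{sets} (the union in $I$ is over a set of iso-classes, by Krull--Schmidt-type finiteness) and that the relevant pushouts, which in a triangulated context are realised via mapping cones of the canonical maps $R \to A$, exist in the homotopical sense. Then I must check $\Cof \,\square\, (\Fib \cap \wc)$ and $(\wc\cap\Cof)\,\square\,\Fib$; the first amounts to showing $\Fib\cap\wc = w\Fib$ and the second to showing $\wc\cap\Cof = \prescript{\square}{}{\Fib}$. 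Concretely one computes that a morphism lies in $\Fib = J^\square$ iff, given the unique square with left leg $0 \to \shift T$, a lift exists, which unwinds to saying the map is surjective on $\cat(\shift T, -)$ — equivalently, building a triangle, the connecting map lands in $(T^\perp)$ on the appropriate side; similarly $w\Fib = I^\square$ forces surjectivity on $\cat(A,-)$ for all $A\in T\ast\shift T$, and hence on $\cat(T,-)$. Matching these descriptions with the definition of $\wc$ (both $g$ and $h$ in $(T^\perp)$) is the core bookkeeping, and Lemma~\ref{lemma: key lemma} is exactly what lets one replace an arbitrary object by one in $T\ast\shift T$ without changing $\cat(T,-)$.

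\textbf{Step 2: Identifying fibrant, cofibrant, and homotopic.} For (i): every object $X$ is fibrant because $X \to \ast$ lies in $J^\square = \Fib$ — with $\ast = 0$ the zero object, the defining square for $J = \{0\to\shift T\}$ has a lift trivially since it only requires extending along $0\to\shift T$ into $X$, and any map $\shift T\to X$ works, so $X \to 0 \in \Fib$. Wait — more carefully, $X\to 0$ is in $J^\square$ iff for each square $0\to X$, $\shift T\to 0$ there is a diagonal $\shift T\to X$; the bottom map $\shift T\to 0$ is zero, so one needs $\shift T\to X$ making the top triangle commute, and the zero map does, so indeed all objects are fibrant. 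For (ii): $X$ is cofibrant iff $0\to X$ is in $\Cof = \prescript{\square}{}{(w\Fib)}$, which by the lifting characterisation of $w\Fib$ (right-orthogonal to all $\cat(R,A)$) translates, via a standard retract/adjunction argument, into $X$ being a retract of some object reachable by cell attachments from $\add T$ along maps with cones in $\add\shift T$ — i.e. $X \in \add(T\ast\shift T) = T\ast\shift T$; here I would use that $T\ast\shift T$ is closed under summands, and Lemma~\ref{lemma: key lemma} gives the cofibrant replacement $A\to X$ with $A\in T\ast\shift T$ concretely. For (iii): since all objects are fibrant, a cylinder object for a cofibrant $X$ is a factorisation of the fold map $X\oplus X \to X$ as $\Cof$ followed by $\wc$; computing this cylinder in the additive/triangulated setting, two maps $f,g: X\to Y$ with $X,Y\in T\ast\shift T$ are left-homotopic iff $f-g$ extends over the cylinder, which — after identifying the "extra" part of the cylinder with a summand built from $\shift T$ — says precisely that $f-g$ factors through $\add\shift T$. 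This recovers the Iyama--Yoshino equivalence of Theorem~\ref{theorem: IY} as $\cat_{cf}/\!\simeq_{\mathrm{htp}} \;\simeq\; (T\ast\shift T)/(\shift T) \simeq \modendt$, matching the picture.

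\textbf{The main obstacle} I expect is twofold: first, making the small object argument / factorisation axiom rigorous in a triangulated category that need not have genuine pushouts — one has only weak (non-functorial) cokernels via cones, which is presumably exactly why the structure is only \emph{almost} a model structure, and the honest statement (Theorem~\ref{theorem: main}) will have to spell out which factorisations are functorial and which are not, or replace "model category" by a weaker notion. Second, the cylinder-object computation in (iii): one must verify that the left-homotopy relation coming from \emph{the} cylinder object produced by the factorisation genuinely coincides with "difference factors through $\add\shift T$", and that this relation is independent of the chosen cylinder and agrees with right-homotopy — the latter should follow formally from (i) (all objects fibrant) plus Quillen's standard arguments once the cylinder is pinned down, but pinning it down requires an explicit triangle computation identifying the factorisation $X\oplus X \rightarrowtail \mathrm{Cyl}(X) \xrightarrow{\sim} X$ with something of the form $X\oplus X \to X \oplus (\text{object in } \add\shift T)$ up to weak equivalence.
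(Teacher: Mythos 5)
Your identification of $I$, $J$ and the lifting characterisations is sound, and your arguments for (i), (ii), (iii) point in the right direction (and essentially match the paper's Corollary~\ref{corollary: fibrant}, Proposition~\ref{proposition: cofibrant} and Lemma~\ref{lemma: htp}). But there is a genuine gap at the heart of the proof: you propose to obtain the factorisation axiom via the small object argument, and the paper explicitly rules this out. The small object argument requires (transfinite) pushouts and colimits, and a triangulated category has neither genuine pushouts nor even functorial cokernels — only weak cokernels via non-functorial cone constructions. This is not a technicality one can paper over ``in the homotopical sense''; it is precisely why Theorem~\ref{theorem: main} only delivers a \emph{left-weak} model structure and why Section~\ref{ssection: cof gene} replaces the small object argument with explicitly constructed factorisations. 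The first factorisation (weak cofibration followed by fibration) is given concretely as $X\stackrel{[^1_0]}{\to} X\oplus\shift T\stackrel{[f\,\alpha]}{\to}Y$ with $\alpha$ a right $\shift\tc$-approximation; the second uses the Buan--Marsh cofibrant replacement $QY\to Y$ of Lemma~\ref{lemma: key lemma}. Without these explicit constructions, your Step~1 does not get off the ground.

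A second, related miss: you flag that ``the honest statement will have to spell out which factorisations are functorial and which are not, or replace `model category' by a weaker notion'', but you do not identify what the weakening actually is. The precise defect is that the factorisation into (cofibration)$\,\circ\,$(trivial fibration) need not exist in general; it only exists when the domain is cofibrant, and even then the left leg is only a \emph{homotopy cofibration}, i.e.\ it enjoys the lifting property against trivial fibrations only up to left (resp.\ right) homotopy — this is the content of Lemma~\ref{lemma: second factorisation} and the notation $\hsq$, $\bsq$. This weakening, not the two-out-of-three property (which the paper proves cleanly via the octahedron axiom in Lemma~\ref{lemma: wc'}), is what the word \emph{almost} is really tracking, and it forces the whole Section~\ref{section: w-model} to be redeveloped under the heading ``left-weak model category''. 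Your proof of~(ii) also gestures at a cell-attachment/retract argument rather than the paper's direct triangle chase in Proposition~\ref{proposition: cofibrant}; that route might be made to work, but as written it inherits the same reliance on pushouts that is unavailable here.
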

There are two reasons for the appearance of the term \emph{almost} in the statement above.
First, the category $\cat$ does not have finite (let alone all small)
limits and colomits in general: It only has finite direct sums, weak kernels and weak cokernels.
Second, every morphism can be factored out as a trivial cofibration followed by a fibration, but
the second factorisation only exists in a weakened form (see Definition~\ref{Definition: weak-model} for details).

\begin{cor}
The inclusion of $T\ast\shift T$ into $\cat$ induces an equivalence of categories
from $T\ast\shift T / (\shift T)$ to $\cat[\wc^{-1}]$. In particular, the localisation of $\cat$
at $\wc$ exists.
\end{cor}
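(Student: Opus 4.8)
The plan is to deduce the corollary from Theorem~\ref{theorem: main} by running the proof of Quillen's Theorem~\ref{theorem: QuillenModel} and checking that the two places where that proof uses genuine limits and colimits, or the full second factorisation, can be replaced by Lemma~\ref{lemma: key lemma}. By parts~(i) and~(ii) of Theorem~\ref{theorem: MainIntro} the full subcategory $\cat_{cf}$ of fibrant-and-cofibrant objects is precisely $T\ast\shift T$, and by part~(iii) the homotopy relation on it is exactly the ideal $(\shift T)$; hence $\cat_{cf}/\!\simeq_{\text{htp}}$ \emph{is} the category $T\ast\shift T/(\shift T)$, which is an ordinary category, its Hom-sets being quotients of Hom-sets of $\cat$. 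So it suffices to show that the functor $T\ast\shift T/(\shift T)\fl\cat[\wc^{-1}]$ induced by the inclusion is an equivalence; we will produce a quasi-inverse by cofibrant replacement.

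\textbf{Cofibrant replacement.} For $X\in\cat$, Lemma~\ref{lemma: key lemma} gives a triangle $Y\fl A_X\xrightarrow{q_X}X\xrightarrow{h}\shift Y$ with $A_X\in T\ast\shift T$, $Y\in T^\perp$ and $h\in(T^\perp)$. Then $q_X\in\wc$: here $h$ lies in $(T^\perp)$ by construction and $Y\fl A_X$ does too, since it factors through $Y\in T^\perp$. Moreover $q_X$ is a fibration, i.e.\ lies in $J^\square$, because this amounts to surjectivity of $\cat(\shift T,A_X)\fl\cat(\shift T,X)$, and the next term $\cat(\shift T,\shift Y)\cong\cat(T,Y)$ of the long exact sequence vanishes. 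Thus $q_X$ is a trivial fibration with $A_X$ cofibrant and fibrant — the cofibrant replacement of $X$, obtained here as the weak form of the otherwise missing second factorisation of $0\fl X$, which is all that is needed. We take $A_X=X$ and $q_X=\mathrm{id}$ when $X\in T\ast\shift T$.

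\textbf{The quasi-inverse.} Define $\Phi_0:\cat\fl T\ast\shift T/(\shift T)$ on objects by $X\mapsto A_X$; for $f:X\fl X'$, the lifting axiom $\Cof\,\square\,(\Fib\cap\wc)$ applied to the cofibration $0\cof A_X$ and the trivial fibration $q_{X'}$ produces $a:A_X\fl A_{X'}$ with $q_{X'}a=fq_X$, and any two such lifts are homotopic (the standard cylinder-object argument, the cylinder objects being furnished by Section~\ref{ssection: w-model}), so $[a]$ and functoriality of $\Phi_0$ are unambiguous. If $f\in\wc$ then $a$ is a weak equivalence by two-out-of-three, hence a homotopy equivalence — the analogue of the Whitehead theorem, proved from the path/cylinder formalism as in the classical case — so $[a]$ is invertible in $T\ast\shift T/(\shift T)$; thus $\Phi_0$ inverts $\wc$ and factors as $\Phi\circ L$ with $\Phi:\cat[\wc^{-1}]\fl T\ast\shift T/(\shift T)$. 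Conversely $T\ast\shift T\hookrightarrow\cat\xrightarrow{L}\cat[\wc^{-1}]$ sends homotopic maps to equal ones ($L$ inverts the two weak equivalences from a cylinder object to its ends), hence factors through the asserted functor $\Psi:T\ast\shift T/(\shift T)\fl\cat[\wc^{-1}]$. With the above choices $\Phi\Psi=\mathrm{id}$ on the nose, while $\Psi\Phi\cong\mathrm{id}$ via the natural isomorphism $L(q_X):L(A_X)\xrightarrow{\ \sim\ }LX$; so $\Psi$ is an equivalence, and in particular $\cat[\wc^{-1}]$ has small Hom-sets, i.e.\ the localisation of $\cat$ at $\wc$ exists.

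The main obstacle is the Whitehead-type statement, and relatedly the well-definedness up to homotopy of the lifts $a$: the classical proofs manufacture homotopies and homotopy inverses from genuine pushouts along cofibrations and pullbacks along fibrations, which $\cat$ lacks, so one must verify that the cylinder and path objects constructed in Section~\ref{ssection: w-model} out of the triangulated structure (using that $\cat$ does possess finite direct sums, weak kernels and weak cokernels) suffice to reproduce these arguments. The elementary computations above — notably that every $q_X$ is automatically a fibration, and that two-out-of-three is available for $\wc$ by hypothesis — indicate that they do.
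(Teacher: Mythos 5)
Your proof is correct and follows essentially the same route as the paper: the paper's own proof simply cites its Theorem~\ref{theorem: Quillen} (the Quillen-style equivalence $\cat_{cf}/\!\sim\;\simeq\hoc$, re-proved in Section~\ref{ssection: homotopy} for left-weak model categories), together with Corollary~\ref{corollary: fibrant}, Proposition~\ref{proposition: cofibrant} and Lemma~\ref{lemma: htp} to identify $\cat_{cf}$ and the homotopy relation, whereas you unpack that theorem's proof (cofibrant replacement via Lemma~\ref{lemma: key lemma}, lifting, Whitehead) in this concrete situation. The places where you hedge --- well-definedness of the lift up to homotopy and the Whitehead-type statement --- are exactly what Section~\ref{ssection: homotopy} (in particular Proposition~\ref{proposition: h ast bijective}, Corollary~\ref{corollary: homotopies coincide} and Proposition~\ref{proposition: weq and htpeq}) establishes in the left-weak setting, so invoking Theorem~\ref{theorem: Quillen} directly, as the paper does, closes those gaps cleanly.
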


\begin{rk}\rm
\begin{enumerate}
 \item In that setup, the morphism $A\fl X$ of Lemma~\ref{lemma: key lemma} can be interpreted as
a cofibrant replacement.
 \item In \cite[Lemma 4.4 and Theorem 4.6]{BeligiannisRigid},
Apostolos Beligiannis proved a generalised version of Theorem~\ref{theorem: BMloc1}, which applies to
contravariantly finite, rigid subcategories. This is also the generality in which we prove Theorem~\ref{theorem: main}.
\end{enumerate}
\end{rk}

After this rather long introduction, we describe the structure of the paper.
In Section~\ref{section: w-model}, we recall some elementary notions from homotopical algebra,
that we slightly modify in order to fit into the setup of Theorem~\ref{theorem: MainIntro}.
We define left-weak model categories and recall some basic definitions from homotopical algebra in Section~\ref{ssection: w-model}.
In Section~\ref{ssection: ppties}, we prove some elementary properties that hold in any left-weak model category.
We adapt the proof of Theorem~\ref{theorem: QuillenModel} to this setup in Section~\ref{ssection: homotopy}, and
consider generating cofibrations and generating trivial cofibrations in Section~\ref{ssection: cof gene}.
Section~\ref{section: w-model str from rigids} is dedicated to proving Theorem~\ref{theorem: MainIntro},
in the more general setup of rigid subcategories.

Throughout the paper, we will use the following conventions:
Products are denoted $\times$ and coproducts $\coprod$ or $\sqcup$.
The morphism $A\coprod B \fl C$ induced by $f$ on $A$ and $g$ on $B$ is written  $\left[f\,g\right]$.
The dual version, from $X$ to $Y\times Z$, is written $(f,g)$.
We write $f\coprod g$ for the morphism $A\coprod B \fl C\coprod D$ induced by
$A\stackrel{f}{\fl}C$ and $B\stackrel{g}{\fl}D$.
When direct sums are involved, we use matrix notations in order to describe morphisms.

\section{Left-weak model categories}\label{section: w-model}

In this section, we mostly recall the very basics on model categories.
We do so with a slightly modified definition of model categories, so as to
include the case of cluster categories, considered in Section~\ref{section: w-model str from rigids}.

References on model categories include the original \cite{QuillenModel},
and the books \cite{Hovey, HirschhornModel}.

\subsection{Definitions and notations from model category theory}\label{ssection: w-model}

For $f,g$ two morphisms in a category, we write $f\sq g$ if, for every commutative square
\[
\xymatrix{
A \dr \bas_f & \bas^g X \\
B \dr \hdrp & Y
}
\]
there is a lift (dashed arrow) where the two triangles commute.
If $f\sq g$, we say that $f$ has the left lifting property with respect to $g$, that
$g$ has the right lifting property with respect to $f$ or that $f$ and $g$ are weakly orthogonal.
We note that this is also equivalent to the statement: Every morphism from $f$ to $g$ is null-homotopic.

If $\ac$ is a collection of morphisms in a category $\cat$, we let $^\square\ac$, resp. $\ac^\square$,
denote the collection of those morphisms that have the left, resp. right, lifting property with respect
to all morphsisms in $\ac$.

Let $\cat$ be a category endowed with a class of morphisms $\wc$. The morphisms in $\wc$
are morphisms that we would like to invert. Let $\Cof$ and $\Fib$ be two classes of morphisms of $\cat$.
If a morphism $w$ belongs to $\wc$, we write $X \stackrel{\sim}{\fl} Y$, and call it a weak equivalence.
If a morphism $i$ belongs to $\Cof$, we write $X\stackrel{i}{\cof}Y$, and call it a cofibration.
If a morphism $p$ belongs to $\Fib$, we write $X\stackrel{p}{\fib}Y$, and call it a fibration.
A morphism which is both a weak equivalence and a cofibration is called a weak, trivial or acyclic cofibration,
and similarly for fibrations.

\subsubsection*{Cylinder objects and left homotopies} Assume that $\cat$ has finite coproducts, and let $X$ be an object of $\cat$.
The morphism $X\coprod X \fl X$ induced by the identity on each copy of $X$ is denoted by $\nabla$.
A \emph{cylinder object} for $X$ is a factorisation of $\nabla$ as follows: $X\coprod X \fl X' \stackrel{\sim}{\fl} X$.
If $f$ and $g$ are two morphisms from $X$ to $Y$ in $\cat$, a \emph{left homotopy} from $f$ to $g$ is a morphism
$H$ from a cylinder object $X'$ for $X$ to $Y$ such that the composition $X\coprod X \fl X' \stackrel{H}{\fl} X$ is
the morphism induced by $f$ on the first copy of $X$, and by $g$ on the second copy of $X$.
Most of the proofs involving left homotopies are easily understood if one has the following picture in mind:

\begin{center}
\begin{tikzpicture}
\draw (-1.1, 0) node{$X$} ;
\draw (-1.1,-.5) node{$\coprod$} ;
\draw (-1.1,-1) node{$X$} ;
\draw (0,0) circle (.7 and .3) ;
\draw[->] (1,0) --(2,0) node[midway, above]{$i_0$} ;
\draw (0,-1) circle (.7 and .3) ;
\draw[->] (1,-1) --(2,-1) node[midway, above]{$i_1$} ;
\draw (3,0) circle (.7 and .3) ;
\draw (2.3,-1) arc (-180:0:.7 and .3) ;
\draw[dashed] (2.3,-1) arc (180:0:.7 and .3) ;
\draw (2.3,0) --(2.3,-1) ;
\draw (3.7,0) --(3.7,-1) ;
\draw (4.3,-.5) node{Cyl$_X$} ;
\draw[->] (4.8,-.5) --(6.2,-.5) node[pos=.3, above]{$H$} ;
\draw (6.5,-.5) node{$Y$} ;
\draw (4,0.2) edge[->, out=30, in=150] node[pos=.6, above]{$f$} (6.2,-.3) ;
\draw (4,-1.2) edge[->, out=-30, in=-150] node[pos=.6, below]{$g$} (6.2, -.7) ;
\draw[->] (3,-1.4) --(3,-2.2) ;
\draw (3,-2.7) circle (.7 and .3) ;
\draw (4.3,-2.7) node{$X$} ;
\end{tikzpicture}
\end{center}

The dual notions are that of:

\subsubsection*{Path objects and right homotopies} Assume that $\cat$ has finite products, and let $Y$ be an object of $\cat$.
A \emph{path object} for $Y$ is a factorisation of the diagonal morphism $Y\stackrel{\Delta}{\fl} Y\times Y$ as follows:
$Y \stackrel{\sim}{\fl} Y' \fl Y\times Y$.
If $f$ and $g$ are two morphisms from $X$ to $Y$ in $\cat$, a \emph{right homotopy} from $f$ to $g$ is a morphism
$K$ from $X$ to a path object $Y'$ for $Y$ such that the composition $X \stackrel{K}{\fl} Y' \fl Y\times Y$ is
the morphism induced by $f$ to the first copy of $Y$, and by $g$ to the second copy of $Y$.
If there is a right homotopy from $f$ to $g$, we write $f\sim^r g$.
A particular case that one might have in mind comes from the category of (unbased) topological spaces, where
$Y'$ is the space of paths in $Y$, the map $Y \fl Y'$ maps a point $y$ to the constant path at $y$, and the map
$Y'\fl Y\times Y$ sends a path to the pair formed by its starting point and its ending point.

Two morphisms $f,g$ are \emph{homotopic}, written $f\sim g$, if they are both left and right homotopic.
A morphism $f$ is called a \emph{homotopy equivalence}
if there is some morphism $g$ such that $fg$ and $gf$ are homotopic to the respective identity morphisms.

Assume that $\cat$ has an initial object $\emptyset$ and a final object $\ast$.
An object $X$ of $\cat$ is called \emph{fibrant} if the canonical morphism $X \fl \ast$ is a fibration.
An object $B$ is called \emph{cofibrant} if the canonical morphism $\emptyset \fl B$ is a cofibration.

For two morphisms $f$ and $g$, we write $f\hsq g$ (resp. $f\bsq g$) if for every commutative square
\[
\xymatrix{
A \dr^u \bas_f & X \bas^g \\
B \dr_v \hdrp^\al & Y
}
\]
there is a lift $\al$ such that $g\al=v$ and $\al f \sim^r u$
(resp. $g\al\sim^l v$ and $\al f = u$). (The filled in triangle in the symbols is meant to represent a 2-cell).

\begin{defi}\rm
A morphism $A\stackrel{j}{\fl}B$ is called a \emph{homotopy cofibration}, denoted $\htpcof$,
if $j\hsq(\wc\cap\Fib)$ and $j\bsq(\wc\cap\Fib)$.
\end{defi}

The \emph{homotopy category} of $\cat$ is the localisation $\cat[\wc^{-1}]$ at the class of weak equivalences. It
is denoted $\hoc$. The assumptions below will ensure that this localisation is well-defined.

\begin{defi}\label{Definition: weak-model}\rm
We call a category $\cat$ a left-weak model category if it
is equipped with three classes of morphisms $\wc$, $\Cof$ and $\Fib$ and if the following axioms are satisfied:
\end{defi}
\begin{itemize}
 \item[(0)] The category $\cat$ has finite products and coproducts, (an initial object $\emptyset$ and a final object $\ast$).
Pull-backs of trivial fibrations along epimorphisms exist and are trivial fibrations.
 \item[(0.5)] If $A$ is cofibrant, then for any homotopy cofibration $A\htpcof B$, B is cofibrant.
 If $B$ is a cofibrant object, then the canonical inclusions $A \fl A\coprod B$ and $A \fl B\coprod A$ are cofibrations.
 \item[(1)] The class $\wc$ has the two-out-of-three property: If $f,g$ are composable morphisms and if any two
of $f,g,gf$ are weak-equivalences, then so is the third.
 \item[(2)] The classes $\wc,\Cof,\Fib$ contain all identities, are stable under composition and under retracts.
 \item[(3)] The following weak orthogonality relations hold:
 $\left(\wc\cap\Cof\right)\subseteq\prescript{\square}{}{\Fib}$ and $\Cof\subseteq\prescript{\square}{}{\left(\Fib\cap\wc\right)}$.
 \item[(4)] Every morphism $X\stackrel{f}{\gfl}Y$ admits a factorisation
as a weak cofibration followed by a fibration.
Every morphism $A\stackrel{g}{\gfl}B$ with $A$ cofibrant admits a factorisation
as a homotopy cofibration followed by a weak fibration.
\begin{center}
\begin{tikzpicture}
\draw (0,0) node{$X$} ;
\draw[->] (.3,0) --(1.7,0) node[midway, above]{$f$} ;
\draw (2,0) node{$Y$} ;
\draw (1,-.7) node{$X'$} ;
\draw[>->] (.2,-.2) --(.7,-.7) node[sloped, midway, below]{$\sim$} ;
\draw[->>] (1.3,-.7) --(1.8,-.2) ;
\begin{scope}[xshift=5cm]
\draw (0,0) node{$A$} ;
\draw[->] (.3,0) --(1.7,0) node[midway, above]{$g$} ;
\draw (2,0) node{$B$} ;
\draw (1,-.7) node{$B'$} ;
\draw[htpcof, thick] (.7,-.7) --(.2,-.2) ;
\draw[->>] (1.3,-.7) --(1.8,-.2) node[sloped, midway, below]{$\sim$} ;
\end{scope}
\end{tikzpicture}
\end{center}

\end{itemize}

\begin{rk}\rm
These are almost the same axioms as those for a model category, with two noticeable differences.
First, the category $\cat$ is not assumed to have finite limits and colimits. This is because
we want to include the case when the category $\cat$ is a cluster category.
Second, in a model category, every morphism can be written as a cofibration followed by a weak fibration. This axiom is weakened
here because in the example studied in Section~\ref{section: w-model str from rigids}, such a factorisation might not exist.
\end{rk}

\begin{rk}\rm
By analogy with model categories, the properties listed in Axiom 0.5 should be easy consequences of the other axioms.
This is not true here, mainly because of the weakened factorisation in Axiom 4.
\end{rk}

\begin{rk}\rm
For model categories, the factorisations are often assumed functorial.
This is not the case in the main example considered in that paper, therefore
no such assumption is made in the definition. However, the factorisations constructed in Section~\ref{section: w-model str from rigids}
are functorial at the level of the homotopy category.
\end{rk}

\begin{rk}\rm
As stated, the axioms are not self-dual. There is an obvious definition for what might be a weak model category:
weaken Axiom 4 for both factorisations, and stengthen Axiom 0.5 accordingly. In that case, we do not know whether
homotopy remains an equivalence relation. What can easily been done is to require that $Y$ be fibrant for the first factorisation
of Axiom 4 to exist. With that definition, the dual of a left weak model category becomes a ``right weak model category''. All the results
remain valid, but some statements have to be modified as follows:
\begin{itemize}
 \item In Lemma~\ref{lemma: first ppties}, (c) only holds for morphisms with fibrant codomain. As a consequence,
 (d) becomes: The pull-back of a fibration along a morphism with fibrant domain is a fibration. Moreover, (e) might not hold
 (the proofs that follow do not make use of this property).
 \item In Lemma~\ref{lemma: specific homotopies} (1), $Y$ has to be assumed fibrant (as is the case in all proofs using this lemma).
 \item In Lemma~\ref{lemma: htp and composition} (b), $Y$ is assumed fibrant. The only consequence of this restriction is the next point.
 \item In Proposition~\ref{proposition: whtpcof implies htpwcof}, $B$ has to be assumed fibrant. Moreover, in point (2) of this proposition,
 the morphism $f$ only has the left lifting property $\hsq$ with respect to fibrations with fibrant domains. This always holds when
 Proposition~\ref{proposition: whtpcof implies htpwcof} is used (In the proof of Proposition~\ref{proposition: h ast bijective},
 the same restriction is required on $h$, but causes no trouble. In the proof of Proposition~\ref{lemma: lhtp iff rhtp}, the object $X'$
 of the last diagram is fibrant, and in the proof of Proposition~\ref{proposition: weq and htpeq}, the object $Z$ is fibrant).
 \item Proposition~\ref{proposition: cof gene} holds with few modifications. Condition (4) becomes: Any morphism $f$ in $\Cof\cap\wc$
 belongs to $w\Cof$ and the converse holds if $f$ has fibrant codomain. Condition (5) only concerns morphisms with fibrant codomains.
\end{itemize}
Moreover, one can check that, in each proof that we have not mentioned and where the first
factorisation is used, the codomains involved are fibrant. This suggests that a better definition for left weak model categories
would require $Y$ to be fibrant. We haven't done so here in order to simplify the exposition and because, in the example
of a left weak model category studied in Section~\ref{section: w-model str from rigids}, all objects are fibrant.
\end{rk}

\subsection{Some basic properties}\label{ssection: ppties}

In this section, we fix a left-weak model category $(\cat,\wc,\Cof,\Fib)$.

In Lemma~\ref{lemma: first ppties}, we collect many basic properties
of model categories that also hold, with the same proofs,
for left-weak model categories.

\begin{lemma}\label{lemma: first ppties}
\begin{enumerate}[(a)]
 \item Left$-\sq$ are stable under pull-backs.
 \item Right$-\sq$ are stable under push-outs.
 \item $(\wc\cap\Cof)^\square = \Fib$ and $\wc\cap\Cof = \prescript{\square}{}{\Fib}$.
 \item $\Fib$ is stable under pull-backs.
 \item $\wc\cap\Cof$ is stable under push-outs.
 \item The initial object is cofibrant and the terminal object is fibrant.
 \item An object is fibrant if and only if it is injective relative to  weak cofibrations.
 \item Any cofibrant object is projective relative to weak fibrations.
 \item $f\sim^l g$ implies $hf\sim^l hg$.
 \item $f\sim^r g$ implies $fh\sim^r gh$.
 \item If $X$ is fibrant and $Y$ is any object, then the two projections $X\times Y \fl X$ and
 $Y\times X \fl X$ are fibrations.
\end{enumerate}
\end{lemma}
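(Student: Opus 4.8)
The plan is to establish the eleven assertions (a)--(k) by the classical arguments of model category theory, keeping track of where the weakened Axioms (0) and (4) intervene. I would group them as follows: the purely diagrammatic facts (a)--(b); the retract argument (c) with its consequences (d)--(e) and the triviality (f); the homotopy‑stability one‑liners (i)--(j); and the fibrant/cofibrant characterisations (g), (h), (k).

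Parts (a) and (b) use nothing about the model structure. For (a), starting from $f\sq g$ and a morphism $g'$ obtained as a pull‑back of $g$ along some $h$, I would take an arbitrary test square of $f$ against $g'$, compose it with the pull‑back projection and with $h$ to obtain a test square of $f$ against $g$, extract a lift there, and then feed that lift --- together with the lower edge of the original square --- into the universal property of the pull‑back; uniqueness in the pull‑back forces the lower triangle to commute. Part (b) is the formal dual, via the universal property of the push‑out. The one recurrent caveat of the whole section is that $\cat$ has neither all pull‑backs nor all push‑outs, so (a), (b) and their consequences must be read as conditional on the existence of the relevant (co)limit, which then has to be checked at each point of use (via Axiom (0), or because $\cat$ has finite products, coproducts and weak (co)kernels).

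The core is (c), by the standard retract argument. The inclusions $\Fib\subseteq(\wc\cap\Cof)^\square$ and $\wc\cap\Cof\subseteq{}^{\square}\Fib$ merely restate Axiom (3). For the reverse inclusions I would use the first factorisation of Axiom (4) --- which is available in full strength, which is why (c) is a genuine equality: given $p\in(\wc\cap\Cof)^\square$, write $p=qi$ with $i$ a weak cofibration and $q$ a fibration; lifting $i$ against $p$ in the evident square exhibits $p$ as a retract of $q$ in the category of morphisms of $\cat$, hence $p\in\Fib$ by closure under retracts. Symmetrically, for $j\in{}^{\square}\Fib$, the same factorisation together with a lift of $j$ against $q$ exhibits $j$ as a retract of the weak cofibration $i$, so $j\in\wc\cap\Cof$, using closure of both $\wc$ and $\Cof$ under retracts. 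Then (d) is (c) combined with (a), (e) is (c) combined with (b) (and (c) once more to re‑identify ${}^{\square}\Fib$ with $\wc\cap\Cof$), and (f) is immediate since $\emptyset\to\emptyset$ and $\ast\to\ast$ are identities, hence respectively a cofibration and a fibration.

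For the last group I would unwind the definitions against the initial and terminal objects. As $\ast$ is terminal, a lifting problem of a weak cofibration against $X\to\ast$ is exactly an extension problem along that weak cofibration with values in $X$; so by (c), $X$ is fibrant if and only if it is injective relative to weak cofibrations, which is (g). Dually, since $\emptyset$ is initial and $\emptyset\to A\in\Cof\subseteq{}^{\square}(\wc\cap\Fib)$ by Axiom (3), any cofibrant $A$ is projective relative to trivial fibrations, which is (h); here I would stress the asymmetry with (g): only one implication is asserted, because the converse would require the ``cofibration followed by trivial fibration'' factorisation, precisely the one absent from a left‑weak model category, so the retract argument cannot be run. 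Parts (i) and (j) are one‑liners: post‑composing a left homotopy from $f$ to $g$ with $h$ gives a left homotopy from $hf$ to $hg$ over the same cylinder object, and dually pre‑composing a right homotopy from $f$ to $g$ with $h$ gives a right homotopy from $fh$ to $gh$ over the same path object. Finally (k) follows from (d): the relevant projection is a pull‑back over $\ast$ of the canonical morphism $X\to\ast$, which is a fibration since $X$ is fibrant. The step needing the most care is getting the retract diagram in (c) exactly right; the one genuine conceptual point to watch everywhere is the conditional existence of the pull‑backs and push‑outs noted above, together with the resulting asymmetry between (g) and (h).
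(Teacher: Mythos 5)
Your proof is correct and takes the same approach as the paper: the paper only sketches part (c), giving precisely the retract argument you spell out (factor $p$ by the first factorisation of Axiom (4), lift against the trivial cofibration, exhibit $p$ as a retract of the fibration, then use closure under retracts; dually for $\prescript{\square}{}{\Fib}$), and declares the remaining parts straightforward. Your treatment of those parts also matches what the paper intends, including the observation that (a)--(b) must be read conditionally on the existence of the relevant (co)limit, and the asymmetry between (g) and (h) stemming from the weakened second factorisation.

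One small point on (k). As printed, the statement asserts that $X\times Y\to X$ is a fibration when $X$ is fibrant; but that projection is the pull-back of $Y\to\ast$ along $X\to\ast$, so the literal claim would in fact require $Y$ to be fibrant. Your argument --- pulling back $X\to\ast$ over $\ast$ --- proves the projection $X\times Y\to Y$ (equivalently $Y\times X\to Y$) is a fibration, which is the corrected statement and the one the paper actually invokes in later proofs (always with $Y=X$, where the distinction is invisible). You implicitly and correctly read (k) this way; it is worth flagging that the codomain in the paper's statement of (k) appears to be a slip.
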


\begin{proof}
We only give a hint for (c), since the other points are straightforward.
If $f$ has the right lifting property with respect to trivial cofibrations,
one uses the first factorisation of Axiom (4) in order to show that $f$ is the retract of a fibration.
\end{proof}

\begin{lemma}\label{lemma: composition htpcof}
Let $f\in\cat(Y,Z)$ and $g\in\cat(X,Y)$ be composable morphisms.
If $f$ is a homotopy cofibration and $g$ a cofibration, then $fg$ is a homotopy cofibration. 
\end{lemma}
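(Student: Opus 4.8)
The statement says: if $f\colon Y\to Z$ is a homotopy cofibration and $g\colon X\to Y$ is a cofibration, then $fg$ is a homotopy cofibration. Unpacking the definition, I must show that $fg$ satisfies both lifting-type properties against $\wc\cap\Fib$, namely $(fg)\hsq(\wc\cap\Fib)$ and $(fg)\bsq(\wc\cap\Fib)$. The plan is to prove each of these separately by a standard two-step ``pasting'' argument, using that $g$ is an honest cofibration (so by Axiom (3) it has the strict left lifting property $\sq$ with respect to $\wc\cap\Fib$) and that $f$ is a homotopy cofibration (so it has the weak/up-to-homotopy lifting properties $\hsq$ and $\bsq$).

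\textbf{Step 1: $(fg)\bsq(\wc\cap\Fib)$.} Given a commutative square with $fg$ on the left and a trivial fibration $p\colon W\to V$ on the right, with top map $u\colon X\to W$ and bottom map $v\colon Z\to V$, I would first lift across $g$: since $g\in\Cof$ and $p\in\wc\cap\Fib$, Axiom (3) gives a \emph{strict} lift $\al_0\colon Y\to W$ with $p\al_0 = v f$ and $\al_0 g = u$. Now $\al_0$ and $vf$ form a commutative square with $f$ on the left and $p$ on the right; since $f\bsq(\wc\cap\Fib)$, there is $\al\colon Z\to W$ with $\al f \sim^l \al_0$ and $p\al = v$. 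Then $p\al = v$ as required, and $\al(fg) = (\al f)g \sim^l \al_0 g = u$ by Lemma~\ref{lemma: first ppties}(i) (left homotopy is preserved under precomposition). Hence $fg\bsq(\wc\cap\Fib)$.

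\textbf{Step 2: $(fg)\hsq(\wc\cap\Fib)$.} This is dual in flavour but not strictly dual because $g$ is still only assumed a cofibration on the left. Given the same shape of square, I would again lift first across $g$ using Axiom (3) to get a strict $\al_0\colon Y\to W$ with $p\al_0 = vf$ and $\al_0 g = u$; then use $f\hsq(\wc\cap\Fib)$ on the square formed by $\al_0$ and $vf$ to obtain $\al\colon Z\to W$ with $p\al = v$ and $\al f \sim^r \al_0$. Then $p\al = v$ and $\al(fg) = (\al f) g \sim^r \al_0 g = u$, using Lemma~\ref{lemma: first ppties}(j) this time (right homotopy is preserved under... here we precompose, so I need the version $f'\sim^r g' \Rightarrow f' h\sim^r g' h$, which is the right-homotopy analogue and follows from functoriality of path objects / or directly since $\al f\sim^r\al_0$ via some $K\colon Z\to W'$ gives $Kg\colon Y\to W'$ witnessing $\al f g\sim^r \al_0 g$). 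So $fg\hsq(\wc\cap\Fib)$.

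\textbf{Main obstacle.} The only genuinely delicate point is checking that the homotopy witnessing the lift behaves correctly under precomposition with $g$ — i.e. that $\al f\sim^l\al_0$ implies $\al fg\sim^l\al_0 g$, and the $\sim^r$ analogue. The left-homotopy case is exactly Lemma~\ref{lemma: first ppties}(i), so it is free. For the right-homotopy case in Step 2, precomposing a right homotopy $K\colon Z\to W'$ (with $W'$ a path object for $W$) by $g$ gives $Kg\colon Y\to W'$, and since the factorisation $W\xrightarrow{\sim}W'\to W\times W$ does not depend on the source object, $Kg$ is a right homotopy from $\al fg$ to $\al_0 g$; this is a routine but necessary verification. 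Everything else is just pasting commutative squares and composing lifts, so I expect the write-up to be short.
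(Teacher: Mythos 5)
Your overall strategy is the same as the paper's: lift strictly across the cofibration $g$, then lift up-to-homotopy across $f$, and paste. Step~2 (the $\hsq$ half) is correct and matches the paper's argument, including the correct use of Lemma~\ref{lemma: first ppties}\,(j) to push the right homotopy through precomposition by $g$.

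Step~1 contains a genuine error, however. You have the definition of $\bsq$ backwards. In the paper's convention, $f\bsq p$ produces a lift $\al$ with $\al f = \al_0$ \emph{on the nose} (top triangle strict) and $p\al \sim^l v$ (bottom triangle up to left homotopy); you wrote $\al f \sim^l \al_0$ and $p\al = v$, i.e. with the roles swapped. This cascades in two ways. First, the lift you then produce for $fg$ satisfies $p\al = v$ and $\al(fg)\sim^l u$, which is not the $\bsq$ condition for $fg$: that condition requires $\al(fg) = u$ exactly and $p\al\sim^l v$. Second, to paper over the unwanted left homotopy in the top triangle you invoke Lemma~\ref{lemma: first ppties}\,(i) as ``left homotopy is preserved under precomposition,'' but (i) is the \emph{post}composition statement ($f\sim^l g \Rightarrow hf\sim^l hg$); the precomposition version for $\sim^l$ is not free in this setup and needs the cofibrancy/fibrancy hypotheses of Lemma~\ref{lemma: htp and composition}\,(a). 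Once you state $\bsq$ correctly the difficulty evaporates: $\al f = \al_0$ gives $\al(fg) = \al_0 g = u$ exactly, and $p\al\sim^l v$ is passed along untouched, so Step~1 needs no homotopy-compatibility lemma at all. Only the $\hsq$ half genuinely needs Lemma~\ref{lemma: first ppties}\,(j), exactly as in the paper's proof.
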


\begin{proof}
Let us check that $fg$ satisfies the two lifting properties defining homotopy cofibrations.
Consider a commutative diagram:

\begin{center}
\begin{tikzpicture}[>=stealth]
\coordinate (X) at (0,2) ;
\coordinate (Y) at (0,1) ;
\coordinate (Z) at (0,0) ;
\coordinate (A) at (2,2) ;
\coordinate (B) at (2,0) ;
\coordinate (U) at (0,0.2) ;
\coordinate (D) at (0,-0.2) ;
\coordinate (L) at (-0.2,0) ;
\coordinate (R) at (0.2,0) ;
\draw (X) node{$X$} ;
\draw (Y) node{$Y$} ;
\draw (Z) node{$Z$} ;
\draw (A) node{$A$} ;
\draw (B) node{$B$} ;
\draw[>->] ($(X)+(D) $) -- ($(Y)+(U) $) node[midway, left]{$g$} ;
\draw[<-open triangle 60 reversed] ($(Z)+(U)$) --($(Y)+(D)$) node[midway, left]{$f$} ;
\draw[->] ($(X)+(R)$) --($(A)+(L)$) node[midway, above]{$a$} ;
\draw[->>] ($(A)+(D)$) --($(B)+(U)$) node[midway, right]{$\wr\;\;p$} ;
\draw[->] ($(Z)+(R)$) --($(B)+(L)$) node[midway, below]{$b$} ;
\draw[->, dashed] ($(Y)+(U)+(R)$) --($(A)+(D)+(L)$) node[near start, above]{$c$} ;
\draw[->,dotted] ($(Z)+(U)+(R)$) --($(A)+(D)+(L)+(D)$) node[midway, above]{$d$} node[midway, below right]{$\sim^l$} ;
\begin{scope}[xshift=5cm]
\coordinate (X) at (0,2) ;
\coordinate (Y) at (0,1) ;
\coordinate (Z) at (0,0) ;
\coordinate (A) at (2,2) ;
\coordinate (B) at (2,0) ;
\coordinate (U) at (0,0.2) ;
\coordinate (D) at (0,-0.2) ;
\coordinate (L) at (-0.2,0) ;
\coordinate (R) at (0.2,0) ;
\draw (X) node{$X$} ;
\draw (Y) node{$Y$} ;
\draw (Z) node{$Z$} ;
\draw (A) node{$A$} ;
\draw (B) node{$B$} ;
\draw[>->] ($(X)+(D) $) -- ($(Y)+(U) $) node[midway, left]{$g$} ;
\draw[<-open triangle 60 reversed] ($(Z)+(U)$) --($(Y)+(D)$) node[midway, left]{$f$} ;
\draw[->] ($(X)+(R)$) --($(A)+(L)$) node[midway, above]{$a$} ;
\draw[->>] ($(A)+(D)$) --($(B)+(U)$) node[midway, right]{$\wr\;\;p$} ;
\draw[->] ($(Z)+(R)$) --($(B)+(L)$) node[midway, below]{$b$} ;
\draw[->, dashed] ($(Y)+(U)+(R)$) --($(A)+(D)+(L)$) node[near start, above]{$c$} ;
\draw[->,dotted] ($(Z)+(U)+(R)$) --($(A)+(D)+(L)+(D)$) node[midway, below]{$e$} node[midway, left]{$\sim^r$} ;
\end{scope}
\end{tikzpicture}
\end{center}

where $p$ is a trivial fibration. Since $g$ is a cofibration, there is a morphism $c$
such that $cg=a$ and $pc=bf$. Since $f$ is a homotopy cofibration, there are two morphisms
$d,e$ such that $df=c$, $pd\sim^l b$, $pe=b$ and $ef\sim^r c$. This implies
$dfg=cg=a$ and, by Lemma~\ref{lemma: first ppties} $(j)$, $efg \sim^r cg = a$, so that $fg$ is a homotopy cofibration.
\end{proof}

\begin{lemma}\label{lemma: i0 htpcof}
Let $B$ be cofibrant and let $B\coprod B \stackrel{[i_0\,i_1]}{\htpcof} B' \stackrel{\sim}{\gfl} B$ be a cylinder object for $B$.
Then the morphism $B \stackrel{i_0}{\gfl}B'$ is both a weak equivalence and a homotopy cofibration.
\end{lemma}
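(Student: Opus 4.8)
The plan is to prove the two assertions separately, using the hypotheses and the already-established lemmas. First I would show that $i_0\colon B\to B'$ is a weak equivalence. By definition of a cylinder object, the composition $B'\stackrel{\sim}{\to}B$ is a weak equivalence, and composing it with $i_0$ (the first structure map of $B\coprod B\to B'$) gives the identity $\mathrm{id}_B$, since $[i_0\,i_1]$ followed by the weak equivalence $B'\to B$ is $\nabla$, which restricts to the identity on each copy of $B$. The identity is a weak equivalence, so by the two-out-of-three property (Axiom (1)), $i_0$ is a weak equivalence.

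Next I would show that $i_0\colon B\to B'$ is a homotopy cofibration. The key is to factor $i_0$ through the coproduct $B\coprod B$ as $i_0 = [i_0\,i_1]\circ \iota$, where $\iota\colon B\to B\coprod B$ is the canonical inclusion of the first summand. Since $B$ is cofibrant and $B$ is a cofibrant object, Axiom (0.5) tells us that the canonical inclusion $B\to B\coprod B$ is a cofibration. Also, $[i_0\,i_1]\colon B\coprod B\to B'$ is a homotopy cofibration by hypothesis (that is what the symbol $\htpcof$ in the cylinder object means). Then Lemma~\ref{lemma: composition htpcof} applies directly: the composite of a homotopy cofibration $[i_0\,i_1]$ with a cofibration $\iota$ (in the right order, i.e. cofibration first, homotopy cofibration second) is a homotopy cofibration. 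Hence $i_0$ is a homotopy cofibration.

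The only subtlety to watch is bookkeeping with the order of composition: Lemma~\ref{lemma: composition htpcof} is stated for $f$ a homotopy cofibration, $g$ a cofibration, with $fg$ the composite, so one must present $i_0$ as ``first the cofibration $B\to B\coprod B$, then the homotopy cofibration $B\coprod B\to B'$''. Once this is set up correctly, both parts are immediate, and there is no serious obstacle — the argument is a direct combination of the two-out-of-three property and the composition lemma. The statement for $i_1$ is of course completely symmetric, using instead the inclusion of the second summand, which is a cofibration by the other half of Axiom (0.5).
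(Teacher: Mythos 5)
Your proof is correct and matches the paper's argument essentially line for line: both identify $i_0$ as the composite of the cofibration $B\to B\coprod B$ (from Axiom 0.5) with the homotopy cofibration $[i_0\,i_1]$, apply Lemma~\ref{lemma: composition htpcof}, and then get the weak equivalence claim from two-out-of-three using the identity $B\to B'\to B$.
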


\begin{proof}
The morphism $B\stackrel{\left[^1_0\right]}{\gfl}B\coprod B$ is a cofibration,
by Axiom 0.5. The following commutative diagram:

\begin{center}
\begin{tikzpicture}[scale=2, >=stealth]
\draw (0,1) node{$B$} (0,0) node{$B\coprod B$} (1,0) node{$B'$} (2,0) node{$B$} ;
\draw[>->] (0,0.8) --(0,0.2) ;
\draw[<-open triangle 60 reversed] (0.8,0) --(0.3,0) node[midway, below]{$[i_0\,i_1]$} ;
\draw[->] (0.2,0.8) --(1,0.2) node[midway, right]{$i_0$} ;
\draw[->] (1.2,0) --(1.8,0) node[midway, above]{$\sim$} ;
\draw (0.3,1) edge[->, out=0, in=120] node[near end, above]{$1$}  (1.9,0.2) ;
\end{tikzpicture}
\end{center}
thus shows that the morphism $i_0$ is a homotopy cofibration by Lemma~\ref{lemma: composition htpcof},
and a weak equivalence by the two-out-of-three property.
\end{proof}

\begin{lemma}\label{lemma: specific homotopies}
Let $X,Y\in\cat$ and let $f,g\in\cat(X,Y)$.
\begin{enumerate}
 \item If $f\sim^l g$ via a path object $Y \stackrel{s}{\gfl} Y' \stackrel{p}{\gfl} Y\times Y$,
where $s$ is a weak equivalence, then $p$ can be assumed to be a fibration.
If moreover $X$ is cofibrant, then $s$ can be assumed to be a trivial cofibration.
 \item Assume that $f\sim^l g$ via a cylinder object
$X\coprod X \stackrel{i}{\gfl}X'\stackrel{t}{\gfl} X$, where $t$ is a weak equivalence.
If $Y$ is fibrant, then $t$ can be assumed to be a trivial fibration.
If $X$ is cofibrant, then $i$ can be assumed to be a homotopy cofibration.
\end{enumerate}
\end{lemma}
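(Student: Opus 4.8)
The plan is to prove the two items separately; in each case we are handed a cylinder (resp.\ path) object together with its homotopy, and we construct a new one with the extra formal property asked for, transporting the homotopy along it. In every instance the new two-step composite is again a factorisation of $\nabla$ (resp.\ of the diagonal $\Delta$) by a one-line diagram chase, and the transported map is a homotopy for the same reason, so I will not belabour those verifications; the content is in producing the transported homotopy, which is where the extra hypotheses enter.

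For (2), write the given cylinder object as $X\coprod X\stackrel{i}{\gfl}X'\stackrel{t}{\gfl}X$ with homotopy $H\colon X'\to Y$, so $Hi=[f\,g]$. To make $t$ a trivial fibration, apply the first factorisation of Axiom 4 to $t$, writing $t=t'j$ with $j$ a weak cofibration and $t'$ a fibration; two-out-of-three forces $t'\in\wc$, so $X\coprod X\stackrel{ji}{\gfl}X''\stackrel{t'}{\gfl}X$ is the wanted cylinder object. Since $j$ is a trivial cofibration and $Y\to\ast$ is a fibration ($Y$ fibrant), Axiom 3 yields a lift $H'\colon X''\to Y$ with $H'j=H$; this $H'$ is the transported homotopy, as $H'(ji)=Hi=[f\,g]$. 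To make $i$ a homotopy cofibration, first note that $X\coprod X$ is cofibrant whenever $X$ is: by Axiom 0.5 the maps $\emptyset\to X$ and $X\to X\coprod X$ are cofibrations, hence so is their composite. This is exactly what allows the second factorisation of Axiom 4 to be applied to $i$, writing $i=qi'$ with $i'$ a homotopy cofibration and $q$ a weak fibration; then $X\coprod X\stackrel{i'}{\gfl}X''\stackrel{tq}{\gfl}X$ is a cylinder object with transported homotopy $Hq$. If both improvements are wanted simultaneously, carry them out in the order just given: $t'$ is a trivial fibration, and composing it with the weak fibration $q$ keeps it one, since $\Fib$ and $\wc$ are closed under composition.

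For (1), write the given path object as $Y\stackrel{s}{\gfl}Y'\stackrel{p}{\gfl}Y\times Y$ with homotopy $K\colon X\to Y'$, so $pK=(f,g)$. Applying the first factorisation of Axiom 4 to $p$, say $p=p'j$ with $j$ a weak cofibration and $p'$ a fibration, replaces the path object by $Y\stackrel{js}{\gfl}Y''\stackrel{p'}{\gfl}Y\times Y$ with transported homotopy $jK$, so $p$ may be taken to be a fibration. Assuming moreover that $X$ is cofibrant and $p$ already a fibration, factor $s=s_2s_1$ with $s_1$ a weak cofibration and $s_2$ a fibration (first factorisation again); two-out-of-three gives $s_2\in\wc$, so $Y\stackrel{s_1}{\gfl}Z\stackrel{ps_2}{\gfl}Y\times Y$ is a path object in which $s_1$ is a trivial cofibration and $ps_2$ is a fibration, being a composite of fibrations. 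Cofibrancy of $X$ is what transports the homotopy: $\emptyset\to X$ is a cofibration and $s_2$ is a trivial fibration, so Axiom 3 supplies $K'\colon X\to Z$ with $s_2K'=K$, whence $(ps_2)K'=pK=(f,g)$.

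I do not anticipate a real obstacle here: this is Quillen's standard argument for model categories (see also \cite{Hovey}), transcribed into the left-weak setting. The one subtlety is the asymmetry of Axiom 4 — the second, weakened factorisation is used in exactly one spot, the ``$i$ is a homotopy cofibration'' half of (2), and its applicability rests precisely on $X\coprod X$ being cofibrant, which is why $X$ cofibrant is the standing hypothesis there; everywhere else the always-available first factorisation suffices, and the fibrancy of $Y$ in (2) and the cofibrancy of $X$ in (1) serve only to solve the lifting problem that carries the homotopy to the new cylinder or path object.
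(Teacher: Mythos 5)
Your proof is correct and follows the same route as the paper's: apply the first factorisation of Axiom 4 to $t$ (resp.\ $p$, $s$) and lift the homotopy through the resulting trivial cofibration using Axiom 3 and the fibrancy/cofibrancy hypothesis, and for the homotopy-cofibration half of (2) use Axiom 0.5 to get $X\coprod X$ cofibrant so the second factorisation of Axiom 4 applies to $i$. The paper's proof is terser (it sketches (2) and refers to the standard model-category argument for (1)), but the content is identical, and your closing remark on the order of operations when combining the two improvements in (2) matches the Remark that follows the lemma in the paper.
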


\begin{proof}
The proof of \emph{(1)} is the same as in the case of model categories.
The proof of \emph{(2)} is similar: Let $X'\stackrel{H}{\gfl}Y$ be a left homotopy from $f$ to $g$.
If $X$ is cofibrant, then $X\coprod X$ is also cofibrant by Axiom 0.5.
The morphism $i$ can thus be factored as a homotopy cofibration $j$ followed by a
weak fibration $p$. Then $Hp$ is a left homotopy from $f$ to $g$.
Assume $Y$ fibrant, and factor $t$ as a trivial cofibration $c$ followed by a trivial fibration $q$.
Since $Y$ is fibrant, the homotopy $H$ lifts through $c$ to a homotopy $H'$ from $f$ to $g$.
\end{proof} 

\begin{rk}\rm
If $X$ is cofibrant and $Y$ is fibrant in \emph{(2)}, then
$t$ can be assumed to be a trivial fibration and $i$ a homotopy cofibration at the same time.
One has to be careful and factor $t$ as a trivial cofibration $j$ followed by a trivial fibration $p$
first, and then factor $ji$ as a homotopy cofibration followed by a trivial fibration.
\end{rk}

Next proposition, though very easy, is the key to making things work in the setup
of left-weak model categories.
\begin{prop}\label{proposition: whtpcof implies htpwcof}
Let $A\stackrel{f}{\gfl}B$ be both a weak equivalence and a homotopy cofibration. Then:
\begin{enumerate}
 \item The lifting property $f\bsq\Fib$ holds;
 \item If moreover $A$ is cofibrant, the lifting property $f\hsq\Fib$ holds.
\end{enumerate}
\end{prop}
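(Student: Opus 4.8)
The plan is to unwind the definitions of the two relations $f\bsq\Fib$ and $f\hsq\Fib$ and in each case build the required lift out of the two lifting data that come for free with $f$ being a homotopy cofibration, namely $f\hsq(\wc\cap\Fib)$ and $f\bsq(\wc\cap\Fib)$. The key observation is that $f$ is a weak equivalence, so in any square with a fibration $p$ on the right we can \emph{first} factor $p$ through a trivial cofibration followed by a fibration (Axiom (4), first factorisation) and use two-out-of-three to manufacture a \emph{trivial} fibration sitting over $p$, against which $f$ does have genuine lifting properties up to homotopy. So the skeleton is: (i) factor $p$ as $A'\fib B'$ composed with a trivial cofibration $B\cof A'$ wait — more precisely, given $p\colon X\fib Y$, factor the identity of $X$ or rather factor $X\to Y$ is already a fibration; instead factor $f$ itself or factor the relevant object appropriately. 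Let me restate: the idea is to produce from the fibration $p$ a trivial fibration by a pullback or factorisation trick, then transport the lift back.

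Concretely, for part (1): given a commutative square with $f\colon A\to B$ on the left and a fibration $p\colon X\fib Y$ on the right, I would factor the top map $A\to X$ as a trivial cofibration $A\stackrel{\sim}{\cof}X'$ followed by a fibration $X'\fib X$; but better, since $f$ is a weak equivalence, consider the induced map and use that the class $\wc\cap\Cof$ has the lifting property against all fibrations (Lemma~\ref{lemma: first ppties}(c)). The genuinely useful move: pull back $p$ along $b\colon B\to Y$ to get $p'\colon P\fib B$ (Lemma~\ref{lemma: first ppties}(d)); then the square factors through $P$, and the top map $A\to P$ composed with $f\colon A\to B$... Since $f$ is a weak equivalence and we want a section-like lift, factor $f$ as $A\stackrel{j}{\htpcof}B''\stackrel{\sim}{\fib}B$ using the second factorisation of Axiom (4) (legitimate here only when $A$ is cofibrant, which is exactly the extra hypothesis in part (2), so this route must be organised so that part (1) avoids it). For part (1) without cofibrancy, instead use the first factorisation: $f = q\circ i$ with $i$ a weak cofibration and $q$ a fibration; by two-out-of-three $q$ is a weak equivalence, hence $q$ is a trivial fibration. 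Then $f\bsq(\wc\cap\Fib)$ applied to $q$ exhibits $f$, up to left homotopy on the relevant triangle, as a retract-like companion of the trivial cofibration $i$; and $i$, being in $\wc\cap\Cof\subseteq\prescript{\square}{}{\Fib}$, lifts strictly against $p$. Composing/adjusting the two lifts and tracking which triangle commutes strictly and which only up to homotopy gives $f\bsq\Fib$.

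For part (2), with $A$ cofibrant, the argument is the same in spirit but now we may use the second factorisation of Axiom (4): write $f = q\circ j$ with $A\stackrel{j}{\htpcof}B'$ a homotopy cofibration and $B'\stackrel{q}{\fib}B$ a weak fibration; two-out-of-three makes $q$ a trivial fibration. Given a square against a fibration $p$, first lift against $q$ using $j\hsq(\wc\cap\Fib)$ (this is where $A$ cofibrant is needed, to even run the factorisation, and to match the form of the homotopy-lifting statement), producing a map $B'\to X$ whose precomposition with $j$ equals the top map and whose postcomposition with $p$ agrees with $b q$; then precompose with a section-up-to-homotopy of $q$ — or rather, since $q$ is a trivial fibration and $A$ cofibrant, one shows $q$ has a right homotopy inverse, and splicing it in converts the $B'$-indexed lift to a $B$-indexed lift $\alpha$ with $p\alpha = b$ strictly and $\alpha f \sim^r$ (equivalently $\sim^l$, after the earlier lemmas on fibrant/cofibrant objects) the top map. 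That is precisely $f\hsq\Fib$.

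The main obstacle I anticipate is bookkeeping of \emph{which} triangle in each square is required to commute on the nose versus only up to homotopy, and ensuring the homotopies produced are of the correct handedness ($\sim^l$ vs $\sim^r$) to feed into $\bsq$ and $\hsq$ as defined; this is delicate because the factorisations in Axiom (4) are one-sided and not functorial, so one cannot symmetrise freely. A secondary subtlety is justifying the ``section up to homotopy of a trivial fibration out of a cofibrant object'' step — this should follow from Lemma~\ref{lemma: first ppties}(h) (cofibrant objects are projective relative to weak fibrations) applied to $A\to\ast$ or to $\mathrm{id}_B$ suitably lifted, together with Lemma~\ref{lemma: specific homotopies}, but it must be invoked carefully so as not to secretly require $B$ to be cofibrant.
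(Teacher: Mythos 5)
Your plan for part (1) is essentially the paper's: factor $f = p\circ i$ (Axiom (4), first factorisation) into a trivial cofibration $i\colon A\cof C$ followed by a fibration $p\colon C\fib B$, note that two-out-of-three forces $p$ to be a trivial fibration, apply $f\bsq p$ to the square with top $i$, left $f$, right $p$, bottom $1_B$ to obtain $u\colon B\to C$ with $uf=i$ and $pu\sim^l 1_B$, lift $i$ strictly against the given fibration $q$ to get $g\colon C\to X$ with $gi=a$ and $qg=bp$, and check $gu$ works using Lemma~\ref{lemma: first ppties}\,(i). That is correct and is the proof in the paper.

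Part (2) is where you go astray. You switch to the \emph{second} factorisation of Axiom (4), writing $f=q\circ j$ with $j\colon A\htpcof B'$ a homotopy cofibration and $q$ a trivial fibration, and propose to lift the given square (whose right-hand edge is an \emph{arbitrary} fibration $p$) through $j$ to produce a map $B'\to X$. But a homotopy cofibration only has the relations $\hsq$ and $\bsq$ against $\wc\cap\Fib$, not against all of $\Fib$, so no such lift is available from the definition. Worse, since $f$ and $q$ are weak equivalences, two-out-of-three makes $j$ a weak equivalence as well, so $j$ is itself a weak equivalence and a homotopy cofibration --- the lift you need for $j$ is exactly the conclusion of the proposition, and the argument becomes circular. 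The one-sidedness you flagged as a bookkeeping worry is actually the structural obstruction: the second factorisation does not help here. The fix is simpler than what you sketch: keep the \emph{same} first factorisation $f=p\circ i$ used in part (1), but now invoke the other half of the homotopy-cofibration definition, $f\hsq p$, on the same square to get $v\colon B\to C$ with $pv=1_B$ (strict) and $vf\sim^r i$; then with the same lift $g$ from before, $qgv=bpv=b$ strictly, and cofibrancy of $A$ feeds into Lemma~\ref{lemma: htp and composition}\,(b) to push $vf\sim^r i$ through $g$, giving $gvf\sim^r gi=a$. No section-up-to-homotopy of $q$, and no risk of secretly needing $B$ cofibrant, since the only cofibrancy used is that of $A$ in Lemma~\ref{lemma: htp and composition}\,(b).
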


\begin{proof}
Assume that $f$ is a weak equivalence and a homotopy cofibration. Factor $f$ as
a trivial cofibration $i$ followed by a fibration $p$:

\begin{center}
\begin{tikzpicture}[>=stealth]
\coordinate (A) at (0,0) ;
\coordinate (B) at (2.5,0) ;
\coordinate (C) at (1.25,-1) ;
\coordinate (U) at (0,0.2) ;
\coordinate (D) at (0,-0.3) ;
\coordinate (L) at (-0.2,0) ;
\coordinate (R) at (0.3,0) ;
\draw (A) node{$A$} (B) node{$B$} (C) node{$C$} ;
\draw[<-open triangle 60 reversed]  ($(B)+(L)$) --($(A)+(R)$) node[pos=0.4, above]{$\sim$} node[pos=0.7, above]{$f$} ;
\draw[>->] ($(A)+(D)+(R)$) --($(C)+(U)+(L)$) node[sloped, midway, above]{$\sim$} node[midway, below left]{$i$} ;
\draw[->>] ($(C)+(U)+(R)$) --($(B)+(D)+(L)$) node[midway, below right]{$p$} ;
\end{tikzpicture}
\end{center}

By the two-out-of-three property, $p$ is a weak equivalence, so that there are morphisms $u$, $v$,
as in the diagrams:

\begin{center}
\begin{tikzpicture}[>=stealth]
\coordinate (A) at (0,2) ;
\coordinate (Bl) at (0,0) ;
\coordinate (C) at (2,2) ;
\coordinate (Br) at (2,0) ;
\coordinate (U) at (0,0.3) ;
\coordinate (D) at (0,-0.3) ;
\coordinate (L) at (-0.3,0) ;
\coordinate (R) at (0.3,0) ;
\draw (A) node{$A$} (Bl) node{$B$} (C) node{$C$} (Br) node{$B$} ;
\draw[<-open triangle 60 reversed] ($(Bl)+(U)$) --($(A)+(D)$) node[midway, left]{$f$} node[sloped, midway, below]{$\sim$};
\draw[>->] ($(A)+(R)$) --($(C)+(L)$) node[midway, above]{$i$} node[midway, below]{$\sim$} ;
\draw[->>] ($(C)+(D)$) --($(Br)+(U)$) node[midway, right]{$\wr\;\;p$} ;
\draw[double distance=1.5pt] ($(Bl)+(R)$) --($(Br)+(L)$) ;
\draw[->,dotted] ($(Bl)+(U)+(R)$) --($(C)+(L)+(D)$) node[pos=.7, left]{$u$} node[midway, below right]{$\sim^l$} ;
\begin{scope}[xshift=5cm]
\coordinate (A) at (0,2) ;
\coordinate (Bl) at (0,0) ;
\coordinate (C) at (2,2) ;
\coordinate (Br) at (2,0) ;
\coordinate (U) at (0,0.3) ;
\coordinate (D) at (0,-0.3) ;
\coordinate (L) at (-0.3,0) ;
\coordinate (R) at (0.3,0) ;
\draw (A) node{$A$} (Bl) node{$B$} (C) node{$C$} (Br) node{$B$} ;
\draw[<-open triangle 60 reversed] ($(Bl)+(U)$) --($(A)+(D)$) node[midway, left]{$f$} node[sloped, midway, below]{$\sim$};
\draw[>->] ($(A)+(R)$) --($(C)+(L)$) node[pos=.2, above]{$i$} node[pos=.6, above]{$\sim$} ;
\draw[->>] ($(C)+(D)$) --($(Br)+(U)$) node[midway, right]{$\wr\;\;p$} ;
\draw[double distance=1.5pt] ($(Bl)+(R)$) --($(Br)+(L)$) ;
\draw[->,dotted] ($(Bl)+(U)+(R)$) --($(C)+(L)+(D)$) node[midway, below]{$v$} node[midway, above =8pt]{$\sim^r$} ;
\end{scope}
\end{tikzpicture}
\end{center}
such that $pu\sim^l 1$, $uf=i$, $pv=1$ and $vf\sim^r i$.
Let $q$ be any fibration and let $(a,b)$ be any morphism from $f$ to $q$.
Since $i$ is a trivial cofibration, there is a lift $g$:

\begin{center}
\begin{tikzpicture}[>=stealth]
\coordinate (Al) at (0,0) ;
\coordinate (Ar) at (2,0) ;
\coordinate (X) at (4,0) ;
\coordinate (C) at (0,-2) ;
\coordinate (B) at (2,-2) ;
\coordinate (Y) at (4,-2) ;
\coordinate (U) at (0,0.3) ;
\coordinate (D) at (0,-0.3) ;
\coordinate (L) at (-0.3,0) ;
\coordinate (R) at (0.3,0) ;
\draw (Al) node{$A$} (Ar) node{$A$} (X) node{$X$} (C) node{$C$} (B) node{$B$} (Y) node{$Y$} ;
\draw[double distance=1.5pt] ($(Al)+(R)$) --($(Ar)+(L)$) ;
\draw[->] ($(Ar)+(R)$) --($(X)+(L)$) node[midway, above]{$a$} ;
\draw[htpcof] ($(B)+(U)$) --($(Ar)+(D)$) node[midway, right]{$f$} ;
\draw[>->] ($(Al)+(D)$) --($(C)+(U)$) node[midway, left]{$i$} node[midway, right]{$\wr$} ;
\draw[->>] ($(X)+(D)$) --($(Y)+(U)$) node[midway, right]{$q$} ;
\draw[->] ($(C)+(R)$) --($(B)+(L)$) node[midway, below]{$p$} ;
\draw[->] ($(B)+(R)$) --($(Y)+(L)$) node[midway, below]{$b$} ;
\draw ($(C)+(U)+(R)$) edge[->, loosely dashed, out=30, in=190] node[near start, above]{$g$} ($(X)+(D)+(L)$) ;
\end{tikzpicture}
\end{center}

such that $qg=bp$ and $gi=a$.
We thus have: $guf = gi = a$, $qgv = bpv = b$ and, by Lemma~\ref{lemma: first ppties} (i), $qgu = bpu \sim^l b$.
If $A$ is cofibrant, then Lemma~\ref{lemma: htp and composition}\,(b) implies $gvf \sim^r gi = a$ (we note that the proof
of Lemma~\ref{lemma: htp and composition} does not make use of Proposition~\ref{proposition: whtpcof implies htpwcof}).

\begin{center}
\begin{tikzpicture}[>=stealth]
\coordinate (A) at (0,2) ;
\coordinate (Bl) at (0,0) ;
\coordinate (C) at (2,2) ;
\coordinate (Br) at (2,0) ;
\coordinate (U) at (0,0.3) ;
\coordinate (D) at (0,-0.3) ;
\coordinate (L) at (-0.3,0) ;
\coordinate (R) at (0.3,0) ;
\draw (A) node{$A$} (Bl) node{$B$} (C) node{$X$} (Br) node{$Y$} ;
\draw[<-] ($(Bl)+(U)$) --($(A)+(D)$) node[midway, left]{$f$} ;
\draw[->] ($(A)+(R)$) --($(C)+(L)$) node[midway, above]{$a$} ;
\draw[->>] ($(C)+(D)$) --($(Br)+(U)$) node[midway, right]{$q$} ;
\draw[->] ($(Bl)+(R)$) --($(Br)+(L)$) node[midway, below]{$b$} ;
\draw[->,dotted] ($(Bl)+(U)+(R)$) --($(C)+(D)+(L)$) node[midway, above]{$gu\;$} node[midway, below right]{$\sim^l$} ;
\begin{scope}[xshift=5cm]
\coordinate (A) at (0,2) ;
\coordinate (Bl) at (0,0) ;
\coordinate (C) at (2,2) ;
\coordinate (Br) at (2,0) ;
\coordinate (U) at (0,0.3) ;
\coordinate (D) at (0,-0.3) ;
\coordinate (L) at (-0.3,0) ;
\coordinate (R) at (0.3,0) ;
\draw (A) node{$A$} (Bl) node{$B$} (C) node{$X$} (Br) node{$Y$} ;
\draw[<-] ($(Bl)+(U)$) --($(A)+(D)$) node[midway, left]{$f$} ;
\draw[->] ($(A)+(R)$) --($(C)+(L)$) node[midway, above]{$a$} ;
\draw[->>] ($(C)+(D)$) --($(Br)+(U)$) node[midway, right]{$q$} ;
\draw[->] ($(Bl)+(R)$) --($(Br)+(L)$) node[midway, below]{$b$} ;
\draw[->,dotted] ($(Bl)+(U)+(R)$) --($(C)+(D)+(L)$) node[midway, below]{$\;gv$} node[pos=.4, above =8pt]{$\sim^r$} ;
\end{scope}
\end{tikzpicture}
\end{center}
\end{proof}

\subsection{The homotopy category}\label{ssection: homotopy}

In this section, we prove Theorem~\ref{theorem: Quillen}, due to Quillen~\cite{QuillenModel}, in
the setup of left weak model categories. We follow the proof in Hovey~\cite{Hovey}, with only a few minor modifications,
the main one being that we do not prove left homotopy to be an equivalence relation. We merely prove that
right homotopy is, and that left homotopy and right homotopy coincide on $\cat(B,X)$ if $B$ is cofibrant and $X$ is fibrant.

The whole of this section might have been summed up into one sentence:
``It is easily checked that the proof of the theorem of Quillen generalises to the setup of
left-weak model categories''. We note that the reason why all proofs are included is,
on the one hand so as to make explicit these minor modifications,
and on the other hand for the potentially interested reader who has very few background knowledge on model categories
(as is the case of the author).

Recall that $\hoc$ denotes the localisation $\cat[\wc^{-1}]$ of $\cat$ at the class of weak equivalences.

\begin{theo}[Quillen]\label{theorem: Quillen}
\begin{enumerate}
 \item If $B$ is cofibrant and $X$ is fibrant, then homotopy is an equivalence relation on $\cat(B,X)$.
 \item Homotopy is compatible with composition in $\cat_{cf}$.
 \item The inclusion $\cat_{cf}\hookrightarrow\cat$ induces an equivalence of categories
 $\cat_{cf}/\!\!\sim\,\stackrel{\simeq}{\gfl}\hoc$. In particular, the localisation $\hoc$ is a well-defined category.
\end{enumerate}
\end{theo}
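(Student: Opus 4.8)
The plan is to follow Hovey's treatment of Quillen's theorem, checking that each classical argument survives the weakening of the axioms. I would organize the proof into three parts matching the three claims.

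\textbf{Part (1): homotopy is an equivalence relation on $\cat(B,X)$ for $B$ cofibrant and $X$ fibrant.} First I would show right homotopy $\sim^r$ is reflexive, symmetric and transitive on $\cat(B,X)$ for $X$ fibrant: reflexivity uses the trivial path object $Y\stackrel{=}{\to}Y\stackrel{\Delta}{\to}Y\times Y$; symmetry uses the swap automorphism of $Y\times Y$; transitivity is the usual pullback-of-path-objects argument, which goes through because Axiom (0) guarantees that pullbacks of trivial fibrations along epimorphisms exist and are trivial fibrations (the map $Y'\to Y\times Y\to Y$ onto the second factor is a trivial fibration by Lemma~\ref{lemma: first ppties}(k) and the two-out-of-three property, and it is split epi, hence epi). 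I would then invoke Proposition~\ref{lemma: lhtp iff rhtp} (referenced in the earlier remark) to conclude that on $\cat(B,X)$, with $B$ cofibrant and $X$ fibrant, left and right homotopy coincide, so $\sim\;=\;\sim^l\cap\sim^r$ is just $\sim^r$, hence an equivalence relation. The key point is that we never need $\sim^l$ to be transitive in general.

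\textbf{Part (2): compatibility with composition on $\cat_{cf}$.} Here I would use Lemma~\ref{lemma: first ppties}(i)--(j): $f\sim^l g$ implies $hf\sim^l hg$ and $f\sim^r g$ implies $fh\sim^r gh$. For objects in $\cat_{cf}$ the remaining directions ($f\sim g \Rightarrow hf\sim hg$ postcomposition on the left, and precomposition) follow by combining these with Part (1) and Proposition~\ref{lemma: lhtp iff rhtp}, since on hom-sets between cofibrant-fibrant objects $\sim$, $\sim^l$ and $\sim^r$ all agree. So $\cat_{cf}/\!\!\sim$ is a well-defined category.

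\textbf{Part (3): the equivalence $\cat_{cf}/\!\!\sim\;\xrightarrow{\simeq}\hoc$.} The strategy is the standard one. First, using the factorisations of Axiom (4), every object has a cofibrant replacement (factor $\emptyset\to X$; note $\emptyset$ is cofibrant by Lemma~\ref{lemma: first ppties}(f)) and then a fibrant replacement (factor the result to $\ast$), yielding a functorial-up-to-homotopy assignment $X\mapsto X_{cf}\in\cat_{cf}$ together with a weak equivalence to/from $X$; I would lean on Proposition~\ref{proposition: whtpcof implies htpwcof} and the lifting properties to see these replacements are well-defined up to homotopy equivalence and that the construction is functorial on $\hoc$ (as flagged in the earlier remark about factorisations being functorial at the homotopy-category level). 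Then I would check: (a) weak equivalences between cofibrant-fibrant objects are homotopy equivalences (the classical ``Whitehead'' argument, using that such a $w$ factors as a trivial cofibration followed by a trivial fibration and applying Proposition~\ref{proposition: whtpcof implies htpwcof}); (b) the localisation functor $\cat\to\hoc$ kills homotopies (a homotopic pair becomes equal after inverting the cylinder/path weak equivalence); (c) hence the composite $\cat_{cf}\hookrightarrow\cat\to\hoc$ factors through $\cat_{cf}/\!\!\sim$, and the replacement functor gives an inverse up to natural isomorphism. Combining (a)--(c) with a universal-property check produces the claimed equivalence, and in particular exhibits $\hoc$ as (equivalent to, hence a genuine) category.

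\textbf{Main obstacle.} The delicate point is that Axiom (4) only provides the second factorisation (homotopy cofibration followed by trivial fibration) for morphisms with \emph{cofibrant} domain, and only in the weakened ``homotopy cofibration'' form rather than as an honest cofibration. So wherever the classical proof factors an arbitrary map as a cofibration followed by a trivial fibration, I must instead first replace the domain by a cofibrant object, then factor, and keep track of the extra homotopies this introduces --- this is exactly what Proposition~\ref{proposition: whtpcof implies htpwcof} is designed to absorb. Verifying that these detours close up (e.g. that the Whitehead argument in (a) still yields a two-sided homotopy inverse, and that the replacement functor is well-defined on morphisms up to homotopy) is the part requiring genuine care; everything else is a transcription of Hovey's argument.
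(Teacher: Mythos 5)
Your proposal matches the paper's approach faithfully: the paper itself announces that it follows Hovey's proof ``with only a few minor modifications, the main one being that we do not prove left homotopy to be an equivalence relation,'' and instead establishes that right homotopy is an equivalence relation when the codomain is fibrant (via the pullback-of-path-objects argument whose viability Axiom (0) guarantees) and that left and right homotopy coincide on $\cat(B,X)$ for $B$ cofibrant and $X$ fibrant (Lemma~\ref{lemma: lhtp iff rhtp} and Corollary~\ref{corollary: homotopies coincide}), exactly as you organize Part~(1). Your identification of the main obstacle --- that the weakened factorisation in Axiom~(4) forces detours through Proposition~\ref{proposition: whtpcof implies htpwcof} and through Ken Brown's lemma whenever the classical proof would invoke a cofibration-then-trivial-fibration factorisation --- is precisely where the paper does its genuine work (Lemmas~\ref{lemma: htp and composition}, \ref{lemma: Ken Brown}, Propositions~\ref{proposition: h ast bijective}, \ref{proposition: weq and htpeq}), so this is essentially the same proof.

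One small caution: the remark about factorisations being functorial at the level of the homotopy category refers specifically to the construction in Section~\ref{section: w-model str from rigids}, not to an arbitrary left-weak model category, so you should not lean on it for the general statement; in Part~(3) it suffices (as in Hovey) that the cofibrant/fibrant replacement is well-defined up to a homotopy that is unique up to homotopy, which follows from the bijectivity statements in Propositions~\ref{proposition: h ast bijective} and its dual, rather than from any genuine functoriality of the factorisations themselves.
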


The remaining of this section is devoted to proving Theorem~\ref{theorem: Quillen}.

\begin{lemma}\label{lemma: htp and composition}
Let $f,g\in\cat(B,X)$, $h\in\cat(A,B)$ and $k\in\cat(X,Y)$.
\begin{itemize}
 \item[(a)] If $X$ is fibrant and $A$ cofibrant, then $f\sim^l g$ implies $fh \sim^l gh$.
 \item[(b)] If $B$ is cofibrant then $f\sim^r g$ implies $kf \sim^r kg$.
\end{itemize}
\end{lemma}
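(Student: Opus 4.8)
The plan is to prove parts (b) and (a) separately; in both cases the only real work is to rigidify the relevant cylinder, resp.\ path, object by means of Lemma~\ref{lemma: specific homotopies}, after which a single lift --- via Axiom~3 for (b), via the weakened lifting property $\bsq$ for (a) --- finishes the argument. Note that Lemma~\ref{lemma: first ppties}~(i)--(j) already yields the two ``opposite'' compatibilities ($f\sim^l g$ implies $hf\sim^l hg$, and $f\sim^r g$ implies $fh\sim^r gh$) for free, so (a) and (b) are genuinely the subtle directions.

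For (b), I would start from a right homotopy $K\colon B\gfl X'$ from $f$ to $g$, along a path object $X\stackrel{s}{\gfl}X'\stackrel{(p_0,p_1)}{\gfl}X\times X$ with $p_0K=f$ and $p_1K=g$. Since $B$ is cofibrant, Lemma~\ref{lemma: specific homotopies}~(1) lets me assume that $s$ is a trivial cofibration and $(p_0,p_1)$ a fibration. Factoring the diagonal $\Delta_Y$ through the first factorisation of Axiom~4, I pick a path object $Y\stackrel{\sigma}{\gfl}Y'\stackrel{(\rho_0,\rho_1)}{\gfl}Y\times Y$ with $(\rho_0,\rho_1)$ a fibration. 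The square with left edge $s$, top edge $\sigma k$, right edge $(\rho_0,\rho_1)$ and bottom edge $(kp_0,kp_1)$ commutes, since both composites equal $\Delta_Y\circ k$; as $s$ is a trivial cofibration and $(\rho_0,\rho_1)$ a fibration, Axiom~3 produces a lift $\ell\colon X'\gfl Y'$ with $\rho_0\ell=kp_0$ and $\rho_1\ell=kp_1$. Then $\ell K\colon B\gfl Y'$ satisfies $\rho_0\ell K=kf$ and $\rho_1\ell K=kg$, hence is a right homotopy from $kf$ to $kg$. The cofibrancy of $B$ is exactly what upgrades $s$ to a trivial cofibration, which is what makes the lift exist.

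For (a) I would argue dually, starting from a left homotopy $H\colon B'\gfl X$ from $f$ to $g$, along a cylinder object $B\coprod B\stackrel{[i_0\,i_1]}{\gfl}B'\stackrel{t}{\gfl}B$ with $Hi_0=f$ and $Hi_1=g$. Since $X$ is fibrant, Lemma~\ref{lemma: specific homotopies}~(2) lets me assume $t$ is a trivial fibration. Because $A$ is cofibrant, $A\coprod A$ is cofibrant (Axiom~0.5 together with the stability of $\Cof$ under composition), so by the second factorisation of Axiom~4 I can factor the fold map $\nabla_A$ as a homotopy cofibration $j\colon A\coprod A\gfl A'$ followed by a weak fibration $q\colon A'\gfl A$, which gives a cylinder object for $A$. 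The square with left edge $j$, top edge $[i_0\,i_1](h\coprod h)$, right edge $t$ and bottom edge $hq$ commutes, since both composites equal $h\circ\nabla_A$; as $j$ is a homotopy cofibration and $t\in\wc\cap\Fib$, the relation $j\bsq(\wc\cap\Fib)$ yields a lift $m\colon A'\gfl B'$ with $mj=[i_0\,i_1](h\coprod h)$ on the nose (and $tm\sim^l hq$, which I discard). Then $Hm\colon A'\gfl X$ satisfies $Hmj=H[i_0\,i_1](h\coprod h)=[f\,g](h\coprod h)=[fh\,gh]$, so $Hm$ is a left homotopy from $fh$ to $gh$ along the cylinder $A\coprod A\stackrel{j}{\gfl}A'\stackrel{q}{\gfl}A$.

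The one genuinely non-formal step, and the place where the left-weak framework shows up, is this last lift in (a): Axiom~4 only factors $\nabla_A$ through a \emph{homotopy} cofibration, not an honest one, so $j$ need not have the strict left lifting property against trivial fibrations. What rescues the argument is that the weakened property $\bsq$ still gives the equality $mj=[i_0\,i_1](h\coprod h)$ \emph{exactly} --- precisely what is needed for $Hm$ to be an honest left homotopy --- while the homotopy is absorbed into the other, irrelevant triangle. Everything else is routine diagram chasing: commutativity of the two squares, the bookkeeping identity $H[i_0\,i_1]=[f\,g]$, and the fact that $q$ is a weak equivalence. As a sanity check I would note that the argument uses only Axioms~0.5, 2, 3 and 4 together with Lemmas~\ref{lemma: first ppties} and \ref{lemma: specific homotopies}, and in particular not Proposition~\ref{proposition: whtpcof implies htpwcof}, in agreement with the remark made in that proposition's proof.
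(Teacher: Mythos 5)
Your proof is correct and follows essentially the same route as the paper: for (a) you rigidify the cylinder for $B$ via Lemma~\ref{lemma: specific homotopies}, factor $\nabla_A$ using the second (weak) factorisation, and extract the left homotopy from the $\bsq$-lift against the trivial fibration $t$, which is precisely the argument the paper gives; for (b) you spell out the standard argument via Axiom~3, which the paper invokes without detail. Your closing remark about avoiding Proposition~\ref{proposition: whtpcof implies htpwcof} also matches the parenthetical caution in the paper's proof of that proposition.
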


\begin{proof}
We only prove (a) since the usual proof of (b) applies as such to our setup.
Assume $X$ fibrant, $A$ cofibrant and $f\sim^l g$. By Lemma~\ref{lemma: specific homotopies},
there is a cylinder object $B\coprod B \fl B' \stackrel{\sim}{\fib} B$ and a homotopy
$B'\stackrel{H}{\gfl}X$ from $f$ to $g$.
Since $A$ is cofibrant, so is $A\coprod A$ (by Axiom 0.5) and we can factor $A\coprod A \stackrel{\nabla}{\gfl}A$
as a homotopy cofibration followed by a trivial fibration. We thus have commutative diagrams:
\begin{center}
\begin{tikzpicture}
\coordinate (U) at (0,0.3) ;
\coordinate (D) at (0,-0.3) ;
\coordinate (L) at (-0.3,0) ;
\coordinate (R) at (0.3,0) ;
\coordinate (aa) at (0,2) ;
\coordinate (a') at (0.9,1.3) ;
\coordinate (a) at (2,2) ;
\coordinate (bb) at (0,0) ;
\coordinate (b') at (0.9,-0.7) ;
\coordinate (b) at (2,0) ;
\coordinate (x) at (0.9,-2) ;
\draw (aa) node{$A\sqcup A$}
(a') node{$A'$}
(a) node{$A$}
(bb) node{$B\sqcup B$}
(b') node{$B'$}
(b) node{$B$}
(x) node{$X$} ;
\draw[->] ($(aa)+(R)+(R)$) --($(a)+(L)$) node[midway, above]{$\nabla$} ;
\draw[->] ($(aa)+(D)$) --($(bb)+(U)$) node[midway, left]{$h\sqcup h$} ;
\draw[->] ($(bb)+(R)+(R)$) --($(b)+(L)$) node[pos=.6, above]{$\nabla$} ;
\draw[->] ($(a)+(D)$) --($(b)+(U)$) node[midway, right]{$h$} ;
\draw[htpcof, thick] ($(a')+(-.1,.1)$) --($(aa)+(D)+(R)$) ;
\draw[->] ($(bb)+(.2,-.2)$) --($(b')+(-.2,.2)$) ;
\draw[->>] ($(a')+(.2,.2)$) --($(a)+(D)+(L)$) node[sloped, pos=0.4, above]{$\sim$} ;
\draw[->>] ($(b')+(.2,.2)$) --($(b)+(D)+(L)$) node[sloped, pos=0.4, above]{$\sim$} ;
\draw[->] ($(b')+(D)$) --($(x)+(U)$) node[midway, right]{$H$} ;
\draw ($(bb)+(D)$) edge[->, out=-90, in=160] node[midway, left]{$[f\,g]$} ($(x)+(-.2,.2)$) ;
\draw[->, dotted] ($(a')+(D)$) --($(b')+(U)$) node[pos=.3, left]{$\al$} ;
\begin{scope}[xshift=5cm]
\coordinate (U) at (0,0.3) ;
\coordinate (D) at (0,-0.3) ;
\coordinate (L) at (-0.3,0) ;
\coordinate (R) at (0.3,0) ;
\coordinate (aa) at (0,2) ;
\coordinate (a') at (0,0) ;
\coordinate (b') at (2,2) ;
\coordinate (b) at (2,0) ;
\draw (aa) node{$A\sqcup A$} (a') node{$A'$} (b') node{$B'$} (b) node{$B$} ;
\draw[->] ($(aa)+(R)+(R)$) --($(b')+(L)$) ;
\draw[htpcof] ($(a')+(U)$) --($(aa)+(D)$) ;
\draw[->] ($(a')+(R)$) --($(b)+(L)$) ;
\draw[->>] ($(b')+(D)$) --($(b)+(U)$) node[midway, right]{$\wr$} ;
\draw[->, dotted] ($(a')+(U)+(R)$) --($(b')+(L)+(D)$) node[midway, left]{$\al$} node[midway, below right]{$\sim^l$} ;
\end{scope}
\end{tikzpicture}
\end{center}
Thanks to the lifting property of homotopy cofibrations, there is a morphism $\al$ such that, in the
diagram on the right-hand side, the upper triangle commutes and the two compositions in the lower triangle
are left homotopic. As can be seen from the diagram on the left-hand side,
the composition $H\al$ is then a homotopy from $fh$ to $gh$.
\end{proof}

\begin{prop}
Let $X,B$ be objects of $\cat$. If $X$ is fibrant then right homotopy is an equivalence relation
on $\cat(B,X)$.
\end{prop}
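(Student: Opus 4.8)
The plan is to check reflexivity, symmetry and transitivity separately, following the classical model-category argument while keeping track of the fact that the only pullbacks at our disposal are those granted by Axiom~(0). Reflexivity and symmetry are formal and use neither fibrancy of $X$ nor any pullback: for reflexivity, $X\stackrel{1_X}{\gfl}X\stackrel{\Delta}{\gfl}X\times X$ is a path object for $X$ (the first morphism is a weak equivalence by Axiom~(2)) and any $f\in\cat(B,X)$ is itself a right homotopy from $f$ to $f$; for symmetry, if $f\sim^r g$ is witnessed by a path object $X\stackrel{s}{\gfl}X'\stackrel{(p_0,p_1)}{\gfl}X\times X$ and a homotopy $K\colon B\to X'$, then composing $(p_0,p_1)$ with the swap automorphism of $X\times X$ yields the path object $X\stackrel{s}{\gfl}X'\stackrel{(p_1,p_0)}{\gfl}X\times X$, through which the same $K$ exhibits $g\sim^r f$.

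The content is in transitivity, and this is where fibrancy of $X$ is used. Given $f\sim^r g$ and $g\sim^r h$, I would first invoke Lemma~\ref{lemma: specific homotopies}~(1) to realise these homotopies through path objects $X\stackrel{s}{\gfl}X'\stackrel{p=(p_0,p_1)}{\gfl}X\times X$ and $X\stackrel{s'}{\gfl}X''\stackrel{q=(q_0,q_1)}{\gfl}X\times X$ in which $p$ and $q$ are fibrations, say with homotopies $K\colon B\to X'$ ($p_0K=f$, $p_1K=g$) and $K'\colon B\to X''$ ($q_0K'=g$, $q_1K'=h$). Since $X$ is fibrant, Lemma~\ref{lemma: first ppties}~(k) makes the two projections $X\times X\to X$ fibrations, so $p_1=\pi_1p$ and $q_0=\pi_0q$ are fibrations; and since $p_1s=1_X=q_0s'$ while $s$ and $s'$ are weak equivalences (being first legs of path objects), two-out-of-three upgrades $p_1$ and $q_0$ to trivial fibrations, while $p_1$ is in addition a split epimorphism via the section $s$. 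By Axiom~(0) the pullback $P:=X'\times_X X''$ of $q_0$ along $p_1$ therefore exists, with projections $r_1\colon P\to X'$, $r_2\colon P\to X''$ satisfying $p_1r_1=q_0r_2$, and $r_1$ is again a trivial fibration. I would then glue the two path objects along $P$: the universal property gives $\sigma\colon X\to P$ with $r_1\sigma=s$, $r_2\sigma=s'$, and with $\rho:=(p_0r_1,q_1r_2)\colon P\to X\times X$ one has $\rho\sigma=(p_0s,q_1s')=\Delta$; as $r_1$ and $r_1\sigma=s$ are weak equivalences, two-out-of-three makes $\sigma$ a weak equivalence, so $X\stackrel{\sigma}{\gfl}P\stackrel{\rho}{\gfl}X\times X$ is a path object for $X$. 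Finally $p_1K=g=q_0K'$ produces, again by the pullback property, a morphism $L\colon B\to P$ with $r_1L=K$, $r_2L=K'$, whence $\rho L=(p_0K,q_1K')=(f,h)$; so $L$ is a right homotopy from $f$ to $h$.

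The main obstacle is precisely what Axiom~(0) is designed to circumvent: the glued path object is built as a pullback, and such pullbacks need not exist in a left-weak model category in general. All the care goes into verifying that the particular pullback used --- of $q_0$ along $p_1$ --- is covered by Axiom~(0), that is, that $p_1$ is a trivial fibration and an epimorphism; and this is exactly what fibrancy of $X$ provides, through Lemma~\ref{lemma: first ppties}~(k). Everything else is the routine diagram chase of the classical proof.
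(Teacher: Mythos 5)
Your proof is correct and follows essentially the same route as the paper: reduce to path objects with fibration second leg via Lemma~\ref{lemma: specific homotopies}, use fibrancy of $X$ and Lemma~\ref{lemma: first ppties} to show the end-point projections are trivial fibrations (and split epis), invoke Axiom~(0) to form the glued pullback $P$, and then verify that $X\to P\to X\times X$ is a path object admitting the desired homotopy. If anything you are slightly more explicit than the paper in spelling out why the hypotheses of Axiom~(0) are met, which is exactly the point where fibrancy is consumed.
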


\begin{proof}
The proof is the same as in the case of model categories, thanks to the second part of Axiom 0.
Right homotopy is reflexive and symmetric even if $X$ is not fibrant. Assume $X$ fibrant.
Let us show that right homotopy is transitive on $\cat(B,X)$.
The main idea is dual to the following: If $H$, resp. $H'$, is a left homotopy
from $f$ to $g$, resp. from $g$ to $h$, one can glue (here glueing means push-out) the associated cylinder objects,
as in the picture below. This yields a new cylinder object.
Since $H$ and $H'$ coincide on the parts of the cylinders which are glued together, they define
a morphism on this cylinder object. We thus obtain a left homotopy from $f$ to $h$.
\begin{center}
\begin{tikzpicture}
\draw (3,0) circle (.7 and .3) ;
\draw (2.3,-1) arc (-180:0:.7 and .3) ;
\draw[dashed] (2.3,-1) arc (180:0:.7 and .3) ;
\draw (2.3,-2) arc (-180:0:.7 and .3) ;
\draw[dashed] (2.3,-2) arc (180:0:.7 and .3) ;
\draw (2.3,0) --(2.3,-2) ;
\draw (3.7,0) --(3.7,-2) ;
\draw (4,0) edge[->, out=30, in=120] node[pos=.35, above]{$h$} (7,-.8) ;
\draw (4.3,-.5) edge[->,out=15, in=150] node[pos=.4, fill=white]{$H'$} (7,-.9) ;
\draw (4,-1) edge[->] node[pos=.35, fill=white]{$g$} (7,-1) ;
\draw (4.3,-1.5) edge[->,out=-15, in=210] node[pos=.4, fill=white]{$H$} (7,-1.1) ;
\draw (4,-2) edge[->,out=-30, in=240] node[pos=.35, below]{$f$} (7,-1.2) ;
\draw (7.3,-1) node{$X$} ;
\draw (0,-.5) node{$B$} ;
\draw (0,-1.5) node{$B$} ;
\draw (.3,-.4) edge[->, out=30, in=180, above] node{$j_1$} (2.1,0) ;
\draw (.3,-.6) edge[->] node[fill=white]{$j_0$} (2.1,-.9) ;
\draw (.3,-1.4) edge[->] node[fill=white]{$i_1$} (2.1,-1.1) ;
\draw (.3,-1.6) edge[->, out=-30, in=180, below] node{$i_0$} (2.1,-2) ;
\end{tikzpicture}
\end{center}

Given a commutative diagram $(\ast)$,
let us show that $f$ and $g$ are right homotopic.

\begin{center}
\begin{tikzpicture}
\coordinate (x) at (0,0) ;
\coordinate (x') at (-2,0) ;
\coordinate (b) at (-4,0) ;
\coordinate (x'') at (2,0) ;
\coordinate (B) at (4,0) ;
\coordinate (xx) at (0,-2) ;
\coordinate (U) at (0,0.3) ;
\coordinate (D) at (0,-0.3) ;
\coordinate (L) at (-0.3,0) ;
\coordinate (R) at (0.3,0) ;
\draw (x) node{$X$} (x') node{$X'$} (b) node{$B$} (x'') node{$X''$}
(B) node{$B$} (xx) node{$X\times X$} ;
\draw[->] ($(b)+(R)$) --($(x')+(L)$) node[midway, above]{$K$} ;
\draw[->] ($(x)+(L)$) --($(x')+(R)$) node[midway, above]{$s$} node[midway, below]{$\sim$} ;
\draw[->] ($(x)+(R)$) --($(x'')+(L)$) node[midway, above]{$t$} node[midway, below]{$\sim$} ;
\draw[->] ($(B)+(L)$) --($(x'')+(R)$) node[midway, above]{$K'$} ;
\draw[->] ($(x)+(D)$) --($(xx)+(U)$) node[midway, right]{$\Delta$} ;
\draw ($(x')+(D)$) edge[->>, out=-80, in=150] node[midway, left]{$(p_0,p_1)$} ($(xx)+(-.6,.2)$) ;
\draw ($(x'')+(D)$) edge[->>, out=-100, in=30] node[midway, right]{$(q_0,q_1)$} ($(xx)+(.6,.2)$) ;
\draw ($(b)+(D)$) edge[->, out=-80, in=180] node[pos=.3, below left]{$(f,g)$} ($(xx)+(-.6,0)$) ;
\draw ($(B)+(D)$) edge[->, out=-100, in=0] node[pos=.3, below right]{$(g,h)$} ($(xx)+(.6,0)$) ;
\draw (-6,-1) node{$(\ast)$} ;
\end{tikzpicture}
\end{center}

We first note that $p_1$ is a trivial fibration:
Let $\pi_1$ be the second projection $X\times X\fl X$. We have
$p_1 s = \pi_1 (p_0,p_1) s = \pi_1 \Delta = 1$ so that, by the two-out-of-three property, $p_1$ is
a weak equivalence. Moreover, $(p_0,p_1)$ is a fibration, so that, in order to prove
that $p_1$ is a fibration, it is enough to prove that $\pi_1$ is a fibration.
The following diagram:
\[
\xymatrix{
X\times X \dr \bas & X \bas \\ X \dr & \ast
}
\]
is a pull-back diagram. By assumption $X$ is fibrant, so that
Lemma~\ref{lemma: first ppties}\,(d) applies and $\pi_1$ is
a fibration.

We now glue the paths objects $X', X''$ and the homotopies $K,K'$ together.
Let $(Y,a,b)$ be a pull-back of $q_0$ along $p_1$. Such a pull-back exists
by Axiom 0 since $p_1$ is a trivial fibrations and $q_0$ is an epimorphism.
In the diagam on the left-hand side:
\begin{center}
\begin{tikzpicture}[scale=.8]
\draw (-1,1) node{$X$} (0,0) node{$Y$} (2,0) node{$X''$} (0,-2) node{$X'$} (2,-2) node{$X$} ;
\draw[->, dotted] (-.8,.8) --(-.2,.2) node[pos=.3, right]{$u$} ;
\draw (-.7,1) edge[->, out=0, in=150] node[near start, above]{$t$} node[sloped, midway, above]{$\sim$} (1.8,0.2) ;
\draw (-1,.7) edge[->, out=-90, in=120] node[near start, left]{$s$} node[sloped, midway, below]{$\sim$} (-.2,-1.8) ;
\draw[->>] (.3,0) --(1.7,0) node[midway, below]{$\sim$} node[pos=.4, above]{$b$} ;
\draw[->>] (0,-.3) --(0,-1.7) node[midway, right]{$\wr$} node[pos=.4, left]{$a$} ;
\draw[->>] (2,-.3) --(2,-1.7) node[midway, left]{$\wr$} node[midway, right]{$q_0$} ;
\draw[->>] (.3,-2) --(1.7,-2) node[midway, above]{$\sim$} node[midway, below]{$p_1$} ;
\begin{scope}[xshift=5cm]
\draw (-1,1) node{$B$} (0,0) node{$Y$} (2,0) node{$X''$} (0,-2) node{$X'$} (2,-2) node{$X$} ;
\draw[->, dotted] (-.8,.8) --(-.2,.2) node[pos=.3, right]{$k$} ;
\draw (-.7,1) edge[->, out=0, in=150] node[near start, above]{$K'$} node[sloped, midway, above]{$\sim$} (1.8,0.2) ;
\draw (-1,.7) edge[->, out=-90, in=120] node[near start, left]{$K$} node[sloped, midway, below]{$\sim$} (-.2,-1.8) ;
\draw[->] (.3,0) --(1.7,0) node[pos=.4, above]{$b$} ;
\draw[->] (0,-.3) --(0,-1.7) node[pos=.4, left]{$a$} ;
\draw[->] (2,-.3) --(2,-1.7) node[midway, right]{$q_0$} ;
\draw[->] (.3,-2) --(1.7,-2) node[midway, below]{$p_1$} ;
\end{scope}
\end{tikzpicture}
\end{center}
we have $q_0 t = 1 = p_1 s$ so that there is a morphism $u$ with $au=s$ and $bu =t$.
By Axiom (0), $b$ is a trivial fibration.
By the two-out-of-three property, $u$ is a weak-equivalence, so that
the factorisation $X\stackrel{u}{\gfl}Y\gfl X\times X$, where the second morphism is $(p_0a, q_1b)$, is a path object
for $X$. In the diagram on the right-hand side, we have
$p_1K=g=q_0K'$ and there is a morphism $k$ with $ak= K$ and $bk=K'$.
The morphism $k$ is a right homotopy from $f$ to $h$: We have
$p_0ak = p_0K = f$ and $q_1bk = q_1K' = h$.
\end{proof}

We recall that $\cat_c$ is the full subcategory of $\cat$ whose objects
are the cofibrant objects.

\begin{lemma}[Ken Brown's lemma]\label{lemma: Ken Brown}
Let $F$ be a contravariant functor from $\cat_c$ to any category $\dc$, and let $\wc'$
be a class of morphisms in $\dc$, containing the identities, which satisfies:
for any composable $u,v$ in $\dc$, if $u\in\wc'$,
then: $uv\in\wc'$ is equivalent to $v\in\wc'$.
Assume that $F$ takes trivial homotopy cofibrations (between cofibrant objects) to $W'$.
Then $F$ takes all weak equivalences (between cofibrant objects) to $W'$.
\end{lemma}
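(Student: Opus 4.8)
The plan is to adapt Ken Brown's classical factorisation trick to our weakened setting. The statement to prove is that a contravariant functor $F\colon\cat_c\fl\dc$ carrying trivial homotopy cofibrations (between cofibrant objects) into $\wc'$ in fact carries \emph{all} weak equivalences between cofibrant objects into $\wc'$. The standard idea is: given a weak equivalence $f\colon A\fl B$ with $A,B$ cofibrant, factor the morphism $(f,1_B)\colon A\coprod B\fl B$ and exhibit $f$ (up to precomposition with a section that $F$ sends into $\wc'$) as a trivial homotopy cofibration, so that the two-out-of-three-type hypothesis on $\wc'$ forces $Ff\in\wc'$.

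First I would form the coproduct $A\coprod B$, which is cofibrant by Axiom 0.5 (as $B$ is cofibrant and $A\fl A\coprod B$ is a cofibration, so $A\coprod B$ is cofibrant using Axiom 0.5 together with, e.g., composing with $A$ cofibrant). Then I would apply the second factorisation of Axiom (4) to the morphism $\left[f\;1_B\right]\colon A\coprod B\fl B$, whose domain is cofibrant: this gives $A\coprod B\overset{j}{\htpcof}C\overset{q}{\fib}B$ with $j$ a homotopy cofibration and $q$ a trivial fibration. Precomposing $j$ with the two canonical inclusions $A\fl A\coprod B$ and $B\fl A\coprod B$ (both cofibrations by Axiom 0.5) and using Lemma~\ref{lemma: composition htpcof}, I get homotopy cofibrations $i_A\colon A\fl C$ and $i_B\colon B\fl C$. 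Now $q i_A = f$ is a weak equivalence and $q$ is a weak equivalence, so $i_A$ is a weak equivalence by two-out-of-three (Axiom 1); similarly $q i_B = 1_B$ forces $i_B$ to be a weak equivalence. Hence $i_A$ and $i_B$ are \emph{trivial} homotopy cofibrations between cofibrant objects, so $F i_A, F i_B \in \wc'$ by hypothesis.

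Next I would use the hypothesis on $\wc'$ twice. Since $F$ is contravariant, $Fq \cdot F i_A = F(i_A q) $... more carefully: $q i_A = f$ gives $F f = F i_A \cdot F q$ and $q i_B = 1_B$ gives $F i_B \cdot F q = 1$; since $F i_B \in \wc'$ and $F i_B \cdot F q = 1 \in \wc'$, the equivalence in the hypothesis (with $u = F i_B$, $v = Fq$) yields $Fq \in \wc'$. Then, since $F i_A \in \wc'$ and $F f = F i_A \cdot F q$ with $F q \in \wc'$, applying the hypothesis once more (with $u = F i_A$, $v = Fq$, so $uv = Ff$) gives $F f \in \wc'$, as desired. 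The only points needing care — and the main obstacle — are the cofibrancy bookkeeping for $A\coprod B$ and $C$ (so that the second factorisation of Axiom 4 and the hypothesis on $F$ actually apply: $C$ is cofibrant because $A\coprod B$ is cofibrant and $j$ is a homotopy cofibration, by Axiom 0.5), and being scrupulous about the variances when invoking the cancellation property of $\wc'$, since $F$ is contravariant and the roles of ``$u$ invertible-ish, then $uv\in\wc'\iff v\in\wc'$'' must be matched to the correct composites. Everything else is a routine diagram chase using results already established.
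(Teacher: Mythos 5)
Your proposal is correct and follows essentially the same route as the paper's own proof: form $A\coprod B$, factor $[f\;1]\colon A\coprod B\fl B$ via the second half of Axiom (4) into a homotopy cofibration followed by a trivial fibration, precompose with the two coproduct inclusions to obtain two trivial homotopy cofibrations between cofibrant objects, and then cancel in $\wc'$ twice using $q\,i_B=1_B$ and $q\,i_A=f$. Your write-up is slightly more explicit than the paper's about the contravariance bookkeeping in the last two cancellations, but the argument is the same.
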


\begin{proof}
Let $A,B$ be cofibrant objects in $\cat$, and let $A\stackrel{f}{\fl}B$ be a weak equivalence.
By Axiom 0, the morphisms $A\stackrel{i}{\fl}A\coprod B$ and  $B\stackrel{j}{\fl}A\coprod B$
are cofibrations, and $A\coprod B$ is cofibrant.
We can thus factor $A\coprod B \stackrel{[f\,1]}{\gfl}B$
as a homotopy cofibration $q$ followed by a weak fibration $p$:
\begin{center}
\begin{tikzpicture}
\coordinate (U) at (0,0.3) ;
\coordinate (D) at (0,-0.3) ;
\coordinate (L) at (-0.3,0) ;
\coordinate (R) at (0.3,0) ;
\coordinate (a) at (0,0) ;
\coordinate (ab) at (2,0) ;
\coordinate (b) at (4,0) ;
\coordinate (c) at (3,-1.5) ;
\draw (a) node{$A$} (ab) node{$A\sqcup B$} (b) node{$B$} (c) node{$C$} ;
\draw[>->] ($(a)+(R)$) --($(ab)+(L)+(L)$) node[midway, above]{$i$} ;
\draw[->] ($(ab)+(R)+(R)$) --($(b)+(L)$) node[midway, above]{$[f\;1]$} ;
\draw[<-open triangle 60 reversed] ($(c)+(-.2,.2)$) --($(ab)+(.2,-.3)$) node[midway, left]{$q$} ;
\draw[->] ($(c)+(.2,.2)$) --($(b)+(-.2,-.2)$) node[sloped, midway, above]{$\sim$} node[near start, right]{$p$} ;
\draw ($(c)+(L)$) edge[<-open triangle 60 reversed, out=180, in=-45] node[pos=.6, below left]{$qi$} ($(a)+(R)+(D)$) ;
\draw ($(a)+(U)+(R)$) edge[->, out=45, in=135] node[sloped, midway, above]{$\sim$} node[near start, above]{$f$} ($(b)+(L)+(U)$) ;
\begin{scope}[xshift=5cm]
\coordinate (U) at (0,0.3) ;
\coordinate (D) at (0,-0.3) ;
\coordinate (L) at (-0.3,0) ;
\coordinate (R) at (0.3,0) ;
\coordinate (a) at (0,0) ;
\coordinate (ab) at (2,0) ;
\coordinate (b) at (4,0) ;
\coordinate (c) at (3,-1.5) ;
\draw (a) node{$B$} (ab) node{$A\sqcup B$} (b) node{$B$} (c) node{$C$} ;
\draw[>->] ($(a)+(R)$) --($(ab)+(L)+(L)$) node[midway, above]{$j$} ;
\draw[->] ($(ab)+(R)+(R)$) --($(b)+(L)$) node[midway, above]{$[f\;1]$} ;
\draw[<-open triangle 60 reversed] ($(c)+(-.2,.2)$) --($(ab)+(.2,-.3)$) node[midway, left]{$q$} ;
\draw[->] ($(c)+(.2,.2)$) --($(b)+(-.2,-.2)$) node[sloped, midway, above]{$\sim$} node[near start, right]{$p$} ;
\draw ($(c)+(L)$) edge[<-, out=180, in=-45] node[pos=.6, below left]{$qj$} ($(a)+(R)+(D)$) ;
\draw ($(a)+(U)+(R)$) edge[->, out=45, in=135] node[midway, above]{$1$} ($(b)+(L)+(U)$) ;
\end{scope}
\end{tikzpicture}
\end{center}
We note that $C$ is cofibrant by Axiom 0.5.
By Lemma~\ref{lemma: composition htpcof}, $qi$ is a homotopy cofibration, and by the two-out-of-three
property, $qi$ is a weak equivalence, so that $F(qi)$ is in $\wc'$. The same argument shows that $F(qj)$
is in $\wc'$. We thus have $Fp\in\wc'$ and $Ff\in\wc'$.
\end{proof}

The dual version of Ken Brown's lemma will also be used. We state it without proof
since the proof is similar to the one above and exactly the same as for model categories.
We recall that $\cat_f$ is the full subcategory of $\cat$ whose objects are the fibrant objects.

\begin{lemma}[Ken Brown's lemma]\label{lemma: Ken Brown fibrant}
Let $F$ be a covariant functor from $\cat_f$ to any category $\dc$, and let $\wc'$
be a class of morphisms in $\dc$, containing the identities, which satisfies:
for any composable $u,v$ in $\dc$, if $u\in\wc'$,
then: $uv\in\wc'$ is equivalent to $v\in\wc'$.
Assume that $F$ takes trivial fibrations (between fibrant objects) to $W'$.
Then $F$ takes all weak equivalences (between fibrant objects) to $W'$.
\end{lemma}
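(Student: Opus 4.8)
The plan is to dualise the proof of Lemma~\ref{lemma: Ken Brown} given above: the coproduct $A\coprod B$ with its canonical injections is replaced by the product $A\times B$ with its two projections, and the ``homotopy cofibration followed by weak fibration'' factorisation is replaced by the ``weak cofibration followed by fibration'' factorisation supplied by Axiom~(4). The only place where the fibrancy hypotheses on $A$ and $B$ will really be used is to guarantee that all objects and maps produced in the construction stay inside $\cat_f$, so that the hypothesis on $F$ can be applied.

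Concretely, I would start from fibrant objects $A,B$ and a weak equivalence $f\colon A\to B$, and form the map $(1_A,f)\colon A\to A\times B$. Since $A$ is fibrant, Lemma~\ref{lemma: first ppties}(k) shows $\pi_A\colon A\times B\to A$ is a fibration, hence $A\times B\to\ast$ is a composite of fibrations and $A\times B$ is fibrant by Axiom~(2); similarly $\pi_B\colon A\times B\to B$ is a fibration. Next, using the first factorisation of Axiom~(4), I would write $(1_A,f)$ as a weak cofibration $i\colon A\to C$ followed by a fibration $p\colon C\to A\times B$; then $C$ is again fibrant (composite of fibrations $C\to A\times B\to\ast$). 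Setting $p_A:=\pi_A p$ and $p_B:=\pi_B p$, both are fibrations between fibrant objects, and $p_A i=\pi_A(1_A,f)=1_A$ while $p_B i=\pi_B(1_A,f)=f$. Since $i$ is a weak equivalence and $1_A$, $f$ are weak equivalences, the two-out-of-three property (Axiom~(1)) forces $p_A$ and $p_B$ to be weak equivalences, i.e.\ trivial fibrations between fibrant objects.

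Finally I would push this diagram through $F$. By hypothesis $F(p_A),F(p_B)\in\wc'$. From $F(p_A)F(i)=F(1_A)=1\in\wc'$ and the assumed property of $\wc'$ (applied with $u=F(p_A)$) we get $F(i)\in\wc'$; then applying the same property with $u=F(p_B)$ and $v=F(i)$ yields $F(f)=F(p_B)F(i)\in\wc'$, which is the claim. I do not expect a genuine obstacle here: the argument is formally dual to Lemma~\ref{lemma: Ken Brown} and, as for ordinary model categories, the only point requiring a little care is checking that $C$, $p_A$ and $p_B$ remain within the fibrant objects so that the hypothesis on $F$ is legitimately invoked.
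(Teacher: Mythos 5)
Your proof is correct and is precisely the argument the paper has in mind: the paper omits the proof, remarking that it is ``similar to the one above and exactly the same as for model categories,'' and your dualization, via the map $(1_A,f)\colon A\to A\times B$, the first factorisation of Axiom~(4), Lemma~\ref{lemma: first ppties}(k) and the two-out-of-three property, is the standard one. It is worth noting explicitly that the dual version is in fact smoother than Lemma~\ref{lemma: Ken Brown}: the first factorisation of Axiom~(4) is available for all morphisms (no constraint on the domain), so you use ordinary fibrations rather than homotopy cofibrations, and the fibrancy of $A\times B$ and of $C$ follows from closure of $\Fib$ under composition together with Lemma~\ref{lemma: first ppties}(k), without needing a dual of Axiom~0.5.
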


\begin{prop}\label{proposition: h ast bijective}
Let $X$ be fibrant, $A,B$ be cofibrant and let $A\stackrel{h}{\gfl}B$ be a weak equivalence.
Then the induced map $\cat(B,X)/\!\sim^r\, \stackrel{h^\ast}{\gfl}\cat(A,X)/\!\sim^r$ is bijective.
\end{prop}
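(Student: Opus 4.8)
The plan is to deduce the statement from Ken Brown's Lemma (Lemma~\ref{lemma: Ken Brown}). First I would check that $F:=\cat(-,X)/\!\sim^r$ is a well-defined contravariant functor from $\cat_c$ to the category of sets, sending $h$ to $h^\ast$: since $X$ is fibrant, right homotopy is an equivalence relation on every $\cat(B,X)$ (as established above), so each $F(B)$ is a genuine set, and precomposition preserves right homotopy by Lemma~\ref{lemma: first ppties}\,(j), so $h^\ast=F(h)$ is well defined and functorial in $h$. Then I would apply Lemma~\ref{lemma: Ken Brown} with $\wc'$ the class of bijections of sets: it contains all identities, and whenever $u$ is a bijection, $uv$ is a bijection if and only if $v$ is. Thus it suffices to prove that $F$ takes every trivial homotopy cofibration between cofibrant objects to a bijection; the conclusion of Lemma~\ref{lemma: Ken Brown} then gives that $F$ takes \emph{every} weak equivalence between cofibrant objects --- in particular $h$ --- to a bijection, which is exactly the assertion.

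So let $A\stackrel{h}{\gfl}B$ be simultaneously a weak equivalence and a homotopy cofibration, with $A$ and $B$ cofibrant. Since $A$ is cofibrant, Proposition~\ref{proposition: whtpcof implies htpwcof}\,(2) provides the lifting property $h\hsq\Fib$, which I would use twice. For surjectivity of $h^\ast$: the map $X\fl\ast$ is a fibration because $X$ is fibrant, so given $a\in\cat(A,X)$, applying $h\hsq\Fib$ to the commutative square with left edge $h$, top edge $a$, right edge $X\fl\ast$ and bottom edge $B\fl\ast$ yields $\al\in\cat(B,X)$ with $\al h\sim^r a$, i.e. $h^\ast[\al]=[a]$.

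For injectivity, I would start from $f,g\in\cat(B,X)$ with $fh\sim^r gh$, witnessed by a path object $X\stackrel{s}{\gfl}X_0\stackrel{q_0}{\gfl}X\times X$ and a morphism $K_0\colon A\fl X_0$ with $q_0K_0=(fh,gh)$. The preparatory step, and the only subtle one, is to replace this path object by a \emph{good} one: using the first factorisation of Axiom 4, write $q_0=q\,c$ with $c$ a trivial cofibration and $q=(p_0,p_1)$ a fibration; then $X\stackrel{cs}{\gfl}X_0'\stackrel{q}{\gfl}X\times X$ is still a path object for $X$ ($cs$ being a composite of weak equivalences), and $K:=cK_0$ satisfies $qK=(fh,gh)=(f,g)\circ h$. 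Applying $h\hsq\Fib$ to the commutative square with left edge $h$, top edge $K$, right edge the fibration $q$ and bottom edge $(f,g)$ produces $L\in\cat(B,X_0')$ with $qL=(f,g)$, that is $p_0L=f$ and $p_1L=g$; hence $L$ is a right homotopy from $f$ to $g$, so $[f]=[g]$ in $\cat(B,X)/\!\sim^r$. This shows $h^\ast$ injective, completing the trivial-homotopy-cofibration case.

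The main obstacle is precisely this improvement of the path object before invoking $h\hsq\Fib$: a homotopy cofibration is only guaranteed to lift (up to right homotopy) against genuine fibrations, so the generic path object witnessing $fh\sim^r gh$ must first be rectified so that $X_0'\fl X\times X$ becomes a fibration. Once that is done, the remainder is the classical Ken Brown's Lemma argument carried over verbatim to the left-weak setting, and I would expect no further difficulty.
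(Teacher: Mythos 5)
Your proof is correct and takes essentially the same route as the paper: reduce via Ken Brown's lemma (Lemma~\ref{lemma: Ken Brown}) to the case of a trivial homotopy cofibration $h$, then deduce both surjectivity and injectivity of $h^\ast$ from the lifting property against fibrations supplied by Proposition~\ref{proposition: whtpcof implies htpwcof}. The only cosmetic differences are that the paper also invokes $h\bsq\Fib$ to get an \emph{exact} lift $bh=a$ in the surjectivity step (your $\hsq$-argument, giving $\al h\sim^r a$, works just as well), and that your explicit rectification of the path object simply re-derives Lemma~\ref{lemma: specific homotopies}\,(1), which the paper cites directly.
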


\begin{proof}
The map $h^\ast$ is well-defined since right homotopy is compatible with pre-compositions.
We first prove the statement when $h$ has the left lifting properties $\hsq$ and $\bsq$
with respect to all fibrations. The general case is then shown to be a consequence
of this first case, thanks to Ken Brown's lemma (Lemma~\ref{lemma: Ken Brown}).

Let us first assume that $h\hsq\Fib$ and $h\bsq\Fib$. The map $h^\ast$
is easily seen to be surjective: Let $A\stackrel{a}{\fl}X$ be a morphism in $\cat$. Since $X$ is fibrant,
there is a lift $B\stackrel{b}{\fl}X$ with $bh=a$ (and the composition $B\stackrel{b}{\fl} X \fl \ast$
is left homotopic to $B\fl \ast$).
The map $h^\ast$ is injective: Let $f,g$ be two morphisms from $B$ to $X$ and assume
that $fh$ and $gh$ are right homotopic. Then, there is a cylinder object
$X\stackrel{\sim}{\gfl}X' \stackrel{p}{\fib} X\times X$ and a right homotopy
$A\stackrel{K}{\gfl}X'$ from $fh$ to $gh$. There is a commutative square:
\begin{center}
\begin{tikzpicture}
\coordinate (a) at (0,2) ;
\coordinate (b) at (0,0) ;
\coordinate (x') at (2,2) ;
\coordinate (xx) at (2,0) ;
\coordinate (U) at (0,0.3) ;
\coordinate (D) at (0,-0.3) ;
\coordinate (L) at (-0.3,0) ;
\coordinate (R) at (0.3,0) ;
\draw (a) node{$A$} (b) node{$B$} (x') node{$X'$} (xx) node{$X\times X$} ;
\draw[->] ($(a)+(D)$) --($(b)+(U)$) node[midway, left]{$h$} ;
\draw[->] ($(a)+(R)$) --($(x')+(L)$) node[midway, above]{$K$} ;
\draw[->>] ($(x')+(D)$) --($(xx)+(U)$) node[midway, right]{$p$} ;
\draw[->] ($(b)+(R)$) --($(xx)+(L)+(L)$) node[midway, below]{$(f,g)$} ;
\draw[->,dotted] ($(b)+(U)+(R)$) --($(x')+(D)+(L)$) node[midway, below]{$\;\overline{K}$} node[pos=.4, above =8pt]{$\sim^r$} ;
\end{tikzpicture}
\end{center}
By assumption on $h$, there is a morphism $\overline{K}$ with $p\overline{K} =(f,g)$
(and $\overline{K}h \sim^r K$). This morphism $\overline{K}$ is a right homotopy from $f$ to $g$.

By Proposition~\ref{proposition: whtpcof implies htpwcof}, the statement thus holds when $h$
is a trivial homotopy cofibration .
In order to conclude, one can apply Ken Brown's lemma (Lemma~\ref{lemma: Ken Brown})
to the functor $\cat(-,X)/\!\sim^r$, from $\cat_c$ to the category of sets,
with $\wc'$ the class of bijections.
\end{proof}

\begin{lemma}\label{lemma: lhtp iff rhtp}
 Let $f,g : B\fl X$ be two morphisms.
\begin{itemize}
 \item[(i)] Assume that $X$ is fibrant and that $f\sim^r g$. Then $g\sim^l f$ for any choice
of a cylinder object $B\coprod B \htpcof B' \stackrel{\sim}{\fl} B$ (if such a cylinder object exists).
 \item[(ii)] Assume that $B$ is cofibrant and that $f\sim^l g$. Then $g\sim^r f$ for any choice of a path object
$X \stackrel{\sim}{\fl} X' \fib X\times X$.
\end{itemize}
\end{lemma}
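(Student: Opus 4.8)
The plan is to prove both parts by the standard ``transfer a homotopy across a lift'' argument, taking care that in a left-weak model category an honest lift is replaced by one of the weak lifts $\hsq$ or $\bsq$, and that in each part we pick the variant whose \emph{exact} half supplies the data we need. Two routine identities are used throughout: $p_0s=p_1s=1$ because a path object factors the diagonal, and $qi_0=qi_1=1$ because a cylinder object factors the fold map $\nabla$.

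For (i), fix a path object $X\stackrel{s}{\gfl}X'\stackrel{(p_0,p_1)}{\gfl}X\times X$ witnessing $f\sim^r g$, with right homotopy $K\colon B\gfl X'$ so that $p_0K=f$ and $p_1K=g$; by Lemma~\ref{lemma: specific homotopies}\,(1) we may assume $(p_0,p_1)$ is a fibration. Since $X$ is fibrant, the first projection $X\times X\gfl X$ is a base change of $X\gfl\ast$ and hence a fibration by Lemma~\ref{lemma: first ppties}\,(d); therefore $p_0$, being a composite of fibrations, lies in $\Fib$, and $p_0s=1$ makes $p_0$ a weak equivalence by the two-out-of-three property, so $p_0\in\wc\cap\Fib$. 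Consider the commutative square with top $[K\,sf]\colon B\coprod B\gfl X'$, left $[i_0\,i_1]$, bottom $fq\colon B'\gfl X$ and right $p_0$ (it commutes because $p_0[K\,sf]=[f\,f]=f\nabla=fq[i_0\,i_1]$). As $[i_0\,i_1]$ is a homotopy cofibration we have $[i_0\,i_1]\bsq(\wc\cap\Fib)$, so there is $\ell\colon B'\gfl X'$ with $\ell[i_0\,i_1]=[K\,sf]$ (its other property, $p_0\ell\sim^l fq$, is irrelevant). Then $\ell i_0=K$ and $\ell i_1=sf$, so $p_1\ell\colon B'\gfl X$ satisfies $(p_1\ell)i_0=p_1K=g$ and $(p_1\ell)i_1=p_1(sf)=f$; that is, $p_1\ell$ is a left homotopy from $g$ to $f$ carried by the given cylinder object.

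For (ii), using that $B$ is cofibrant and starting from $f\sim^l g$, Lemma~\ref{lemma: specific homotopies}\,(2) provides a cylinder object $B\coprod B\stackrel{[i_0\,i_1]}{\htpcof}B'\stackrel{q}{\gfl}B$ with $[i_0\,i_1]$ a homotopy cofibration, together with a left homotopy $H\colon B'\gfl X$, $Hi_0=f$, $Hi_1=g$. By Lemma~\ref{lemma: i0 htpcof}, $i_0\colon B\gfl B'$ is a weak equivalence and a homotopy cofibration, and as its domain $B$ is cofibrant, Proposition~\ref{proposition: whtpcof implies htpwcof}\,(2) yields $i_0\hsq\Fib$. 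The given path object $X\stackrel{s}{\gfl}X'\stackrel{(p_0,p_1)}{\fib}X\times X$ has $(p_0,p_1)$ a fibration, so lift the commutative square with top $sf\colon B\gfl X'$, left $i_0$, bottom $(H,fq)\colon B'\gfl X\times X$ and right $(p_0,p_1)$ (it commutes since $(p_0,p_1)(sf)=(f,f)$ and $(H,fq)i_0=(Hi_0,fqi_0)=(f,f)$), obtaining $\lambda\colon B'\gfl X'$ with $(p_0,p_1)\lambda=(H,fq)$, i.e.\ $p_0\lambda=H$ and $p_1\lambda=fq$ (the condition $\lambda i_0\sim^r sf$ is unused). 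Then $\lambda i_1\colon B\gfl X'$ satisfies $p_0(\lambda i_1)=Hi_1=g$ and $p_1(\lambda i_1)=(fq)i_1=f$, so $\lambda i_1$ is a right homotopy from $g$ to $f$ carried by the given path object.

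The delicate point --- essentially the only one --- is the choice of weak lift. In (i) one must use $\bsq$ rather than $\hsq$, so that $\ell$ agrees with $[K\,sf]$ on the nose and $(p_1\ell)i_0$, $(p_1\ell)i_1$ can be read off; this lift is available exactly because $[i_0\,i_1]$ is a homotopy cofibration and $p_0\in\wc\cap\Fib$, which is where fibrancy of $X$ is spent. In (ii) one must use $\hsq$ instead, so that $(p_0,p_1)\lambda$ equals $(H,fq)$ on the nose; this needs $i_0\hsq\Fib$, which by Proposition~\ref{proposition: whtpcof implies htpwcof} requires both that $i_0$ is a trivial homotopy cofibration and that $B$ is cofibrant. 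Note that neither construction uses the second factorisation of Axiom~(4): in (i) the cylinder object is assumed given (hence the clause ``if such a cylinder object exists''), and in (ii) path objects of the stated shape always exist by the first factorisation.
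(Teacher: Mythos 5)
Your proof is correct and follows essentially the same route as the paper: in (i) you use the $\bsq$ half of the homotopy-cofibration lifting property of $[i_0\,i_1]$ against $p_0\in\wc\cap\Fib$ (where fibrancy of $X$ supplies that $p_0$ is a fibration via Lemma~\ref{lemma: first ppties}\,(d)), and in (ii) you use $i_0\hsq\Fib$ from Lemma~\ref{lemma: i0 htpcof} and Proposition~\ref{proposition: whtpcof implies htpwcof}, post-composing with $p_1$ or pre-composing with $i_1$ to read off the new homotopy. Your only (welcome) divergence is that you cite Lemma~\ref{lemma: specific homotopies}\,(1) explicitly to arrange $(p_0,p_1)$ to be a fibration in part (i), a step the paper uses silently.
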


\begin{proof}
(i) Assume that $X$ is fibrant and that there is a homotopy:
\[
\xymatrix{
X \dr^{\sim}_s \bdr_{\Delta} & X' \base^{^{^{\!(p_0,p_1)}}} & B \gau_K \bg^{(f,g)} \\
& X\times X &
}
\]
and let $B\coprod B \htpcof B' \stackrel{\sim}{\fl} B$ be a cylinder object for $B$.
By Lemma~\ref{lemma: first ppties}\,(d), the first projection $X\times X\fl X$ is a fibration,
so that the morphism $X'\stackrel{p_0}{\gfl}X$ is a fibration.
Since $p_0 s = 1$, it is also a weak equivalence. We can thus apply the lifting property of homotopy
cofibrations to the commutative square:
\begin{center}
\begin{tikzpicture}
\coordinate (U) at (0,0.3) ;
\coordinate (D) at (0,-0.3) ;
\coordinate (L) at (-0.3,0) ;
\coordinate (R) at (0.3,0) ;
\coordinate (bb) at (0,2) ;
\coordinate (b') at (0,0) ;
\coordinate (x') at (2,2) ;
\coordinate (x) at (2,0) ;
\draw (bb) node{$B\sqcup B$} (x') node{$X'$} (b') node{$B'$} (x) node{$X$} ;
\draw[->] ($(bb)+(R)+(R)$) --($(x')+(L)$) node[midway, above]{$[K\;sf]$} ;
\draw[htpcof] ($(b')+(U)$) --($(bb)+(D)$) node[midway, left]{$[i_0\;i_1]$} ;
\draw[->] ($(b')+(R)$) --($(x)+(L)$) node[midway, below]{$ft$} ;
\draw[->>] ($(x')+(D)$) --($(x)+(U)$) node[midway, right]{$\wr\;\;p_0$} ;
\draw[->, dotted] ($(b')+(U)+(R)$) --($(x')+(L)+(D)$) node[midway, left]{$\widetilde{K}$} node[midway, below right]{$\sim^l$} ;
\end{tikzpicture}
\end{center}
in order to construct a morphism $B'\stackrel{\widetilde{K}}{\gfl}X'$ such that $\widetilde{K}[i_0\;i_1] = [K\;sf]$.
The composition $p_1\widetilde{K}$ is then a left homotopy from $g$ to $f$, as the following
equalities show:
$p_1\widetilde{K}[i_0\;i_1] = p_1[K\;sf] = [g\;f]$.

(ii) Assume that $B$ is cofibrant and that $f$ is left homotopic to $g$.
By Lemma~\ref{lemma: specific homotopies}, there is a commutative diagram:
\begin{center}
\begin{tikzpicture}
\coordinate (bb) at (0,2) ;
\coordinate (x) at (-2,0) ;
\coordinate (b') at (0,0) ;
\coordinate (b) at (2,0) ;
\coordinate (U) at (0,0.3) ;
\coordinate (D) at (0,-0.3) ;
\coordinate (L) at (-0.3,0) ;
\coordinate (R) at (0.3,0) ;
\draw (bb) node{$B\sqcup B$} (x) node{$X$} (b') node{$B'$} (b) node{$B$} ;
\draw[->] ($(bb)+(D)+(L)$) --($(x)+(U)+(R)$) node[pos=.6, above left]{$[f\;g]$} ;
\draw[htpcof] ($(b')+(U)$) --($(bb)+(D)$) node[pos=.35, right]{$[i_0\;i_1]$} ;
\draw[->] ($(bb)+(D)+(R)$) --($(b)+(U)+(L)$) node[midway, above right]{$\nabla$} ;
\draw[->] ($(b')+(L)$) --($(x)+(R)$) node[midway, below]{$H$} ;
\draw[->] ($(b')+(R)$) --($(b)+(L)$) node[midway, above]{$\sim$} node[midway, below]{$s$} ;
\end{tikzpicture}
\end{center}
By Lemma~\ref{lemma: i0 htpcof}, the morphism $i_0$ is both a weak equivalence and a homotopy cofibration.
By Proposition~\ref{proposition: whtpcof implies htpwcof}, we have: $i_0\hsq\Fib$.
Let $X \stackrel{t\;\sim}{\gfl}X'\stackrel{(p_0,p_1)}{\fib}X\times X$ be a path object for $X$.
The following square commutes
\begin{center}
\begin{tikzpicture}
\coordinate (U) at (0,0.3) ;
\coordinate (D) at (0,-0.3) ;
\coordinate (L) at (-0.3,0) ;
\coordinate (R) at (0.3,0) ;
\coordinate (b) at (0,2) ;
\coordinate (b') at (0,0) ;
\coordinate (x') at (2,2) ;
\coordinate (xx) at (2,0) ;
\draw (b) node{$B$} (x') node{$X'$} (b') node{$B'$} (xx) node{$X\times X$} ;
\draw[->] ($(b)+(R)$) --($(x')+(L)$) node[midway, above]{$tf$} ;
\draw[htpcof] ($(b')+(U)$) --($(b)+(D)$) node[midway, left]{$i_0$} node[midway, right]{$\wr$} ;
\draw[->] ($(b')+(R)$) --($(xx)+(L)+(L)$) node[midway, below]{$(H,fs)$} ;
\draw[->>] ($(x')+(D)$) --($(xx)+(U)$) node[midway, right]{$(p_0,p_1)$} ;
\draw[->, dotted] ($(b')+(U)+(R)$) --($(x')+(L)+(D)$) node[midway, right]{$\widetilde{H}$} node[midway, above left]{$\sim^r$} ;
\end{tikzpicture}
\end{center}
so that there is a morphism $\widetilde{H}$ with $(p_0,p_1)\widetilde{H} = (H, fs)$ and
$\widetilde{H}i_0 \sim^r tf$.
The equalities below show that $\widetilde{H}i_1$ is a right homotopy from $g$ to $f$:
$(p_0,p_1)\widetilde{H}i_1 = (H,fs)i_1 = (g,f)$.
\end{proof}

\begin{cor}\label{corollary: homotopies coincide}
Let $X$ be fibrant and $B$ be cofibrant. Then left homotopy and right homotopy coincide
on $\cat(B,X)$. In particular left homotopy is an equivalence relation on $\cat(B,X)$.
\end{cor}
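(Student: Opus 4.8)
The plan is to deduce the corollary directly from Lemma~\ref{lemma: lhtp iff rhtp}, together with two facts established earlier in this section: right homotopy is reflexive and symmetric for an arbitrary codomain, and right homotopy is an equivalence relation on $\cat(B,X)$ whenever $X$ is fibrant. I would prove the two implications of $f\sim^l g \Leftrightarrow f\sim^r g$ by applying one part of Lemma~\ref{lemma: lhtp iff rhtp} and then correcting the direction of the homotopy using the symmetry of $\sim^r$; this way one never needs to argue separately that $\sim^l$ is symmetric. Once the two relations are shown to agree on $\cat(B,X)$, the ``in particular'' clause is immediate: $\sim^r$ is an equivalence relation there, and two morphisms of $\cat(B,X)$ are homotopic precisely when they are (left, equivalently right) homotopic.

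For $f\sim^l g \Rightarrow f\sim^r g$, I would first build a path object for $X$ by factoring the diagonal $X\stackrel{\Delta}{\gfl}X\times X$ as a weak cofibration followed by a fibration, using the first factorisation of Axiom 4. Then Lemma~\ref{lemma: lhtp iff rhtp}\,(ii) applies, since $B$ is cofibrant, and yields $g\sim^r f$; symmetry of right homotopy then gives $f\sim^r g$.

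For the converse $f\sim^r g \Rightarrow f\sim^l g$, the only subtlety is providing the input required by Lemma~\ref{lemma: lhtp iff rhtp}\,(i), namely a cylinder object for $B$ whose structure morphism $B\coprod B\to B'$ is a homotopy cofibration. Since $B$ is cofibrant, $B\coprod B$ is cofibrant by Axiom 0.5, so the second factorisation of Axiom 4 applies to $B\coprod B\stackrel{\nabla}{\gfl}B$ and produces a cylinder object $B\coprod B\htpcof B'\stackrel{\sim}{\gfl}B$, the weak fibration being in particular a weak equivalence. Using symmetry of $\sim^r$ to obtain $g\sim^r f$ and then Lemma~\ref{lemma: lhtp iff rhtp}\,(i), with $X$ fibrant and this cylinder object, gives $f\sim^l g$, and the corollary follows.

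I do not expect a genuine obstacle: the argument is short. The one point that needs care, in contrast with the classical situation, is that the auxiliary objects fed into Lemma~\ref{lemma: lhtp iff rhtp} --- a path object for $X$ and a cylinder object for $B$ with homotopy-cofibration structure morphism --- must be shown to exist, which is exactly why Axiom 4 and the cofibrancy of $B$ (hence of $B\coprod B$) are invoked explicitly rather than taken for granted.
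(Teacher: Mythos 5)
Your proposal is correct and follows essentially the same route as the paper: both deduce the corollary from Lemma~\ref{lemma: lhtp iff rhtp}, with the only substantive point being that the cylinder object $B\coprod B\htpcof B'\stackrel{\sim}{\fl}B$ required by part~(i) exists because $B$ (hence $B\coprod B$) is cofibrant, so the second factorisation of Axiom~4 applies. The extra detail you supply --- producing a path object via the first factorisation, and using the symmetry of $\sim^r$ to fix the direction of the homotopy --- is implicit in the paper's one-line argument, and is a reasonable thing to spell out.
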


\begin{proof}
If $B$ is cofibrant, then there always exists a cylinder object of the form
$B\coprod B \htpcof B' \stackrel{\sim}{\fl} B$ by Axiom (4).
\end{proof}

\begin{prop}
Let $X,Y$ be fibrant, $B$ be cofibrant, and $h:X\stackrel{\sim}{\fl}Y$ be a weak equivalence.
Then the induced map $\cat(B,X)/\!\sim\; \stackrel{h_\ast}{\gfl}\cat(B,Y)/\!\sim$ is bijective.
\end{prop}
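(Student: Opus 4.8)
The plan is to follow the classical Quillen argument, reducing the general case to that of a trivial fibration by means of the fibrant version of Ken Brown's lemma (Lemma~\ref{lemma: Ken Brown fibrant}). First I would check that $h_\ast$ is well defined: if $f\sim g$ in $\cat(B,X)$ then, $B$ being cofibrant, Lemma~\ref{lemma: htp and composition}\,(b) gives $hf\sim^r hg$, and since $X$ and $Y$ are fibrant and $B$ is cofibrant, Corollary~\ref{corollary: homotopies coincide} identifies $\sim$ with $\sim^r$ on both $\cat(B,X)$ and $\cat(B,Y)$, so $hf\sim hg$. The same remarks show that $X\mapsto\cat(B,X)/\!\sim$ is a well-defined covariant functor $F$ from $\cat_f$ to the category of sets, acting on morphisms by post-composition.

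The heart of the proof is the case in which $h$ is a trivial fibration $p\colon X\fib Y$. For surjectivity of $p_\ast$: since $B$ is cofibrant the morphism $\emptyset\to B$ is a cofibration, hence by Axiom 3 has the left lifting property with respect to $p\in\Fib\cap\wc$; lifting an arbitrary $b\colon B\to Y$ against $\emptyset\to X$ produces $a\colon B\to X$ with $pa=b$. For injectivity, suppose $pf\sim pg$; by definition of $\sim$ we have $pf\sim^l pg$. Since $Y$ is fibrant and $B$ is cofibrant, Lemma~\ref{lemma: specific homotopies}\,(2) lets us choose a cylinder object $B\coprod B\stackrel{[i_0\,i_1]}{\htpcof}B'\stackrel{\sim}{\gfl}B$ in which $[i_0\,i_1]$ is a homotopy cofibration, together with a left homotopy $H\colon B'\to Y$ from $pf$ to $pg$, so that $H[i_0\,i_1]=[pf\,pg]=p[f\,g]$. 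This is exactly a commutative square with top edge $[f\,g]\colon B\coprod B\to X$, left edge $[i_0\,i_1]$, right edge $p$ and bottom edge $H$. Because $[i_0\,i_1]$ is a homotopy cofibration and $p\in\wc\cap\Fib$, the relation $[i_0\,i_1]\bsq p$ produces a morphism $\al\colon B'\to X$ with $\al[i_0\,i_1]=[f\,g]$ (the accompanying homotopy $p\al\sim^l H$ is not needed). Then $\al i_0=f$ and $\al i_1=g$, so $\al$ is a left homotopy $f\sim^l g$; by Corollary~\ref{corollary: homotopies coincide}, $f\sim g$. Hence $p_\ast$ is a bijection.

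To finish, I would apply Lemma~\ref{lemma: Ken Brown fibrant} to the functor $F=\cat(B,-)/\!\sim$ from $\cat_f$ to the category of sets, with $\wc'$ the class of bijections; this $\wc'$ contains all identities and clearly satisfies the required condition (if $u$ is a bijection then $uv$ is a bijection iff $v$ is). The previous paragraph shows $F$ sends trivial fibrations between fibrant objects to bijections, so Ken Brown's lemma gives that $F$ sends every weak equivalence between fibrant objects to a bijection. Applied to $h\colon X\stackrel{\sim}{\fl}Y$, this says precisely that $h_\ast\colon\cat(B,X)/\!\sim\;\to\cat(B,Y)/\!\sim$ is bijective.

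I expect the only real difficulty to lie in the injectivity step for a trivial fibration: one has to use the $\bsq$-lifting property that is built into the definition of a homotopy cofibration --- rather than an honest lift --- and then appeal to Corollary~\ref{corollary: homotopies coincide} to upgrade the resulting left homotopy to a genuine homotopy. The bookkeeping for well-definedness, the surjectivity argument, and the Ken Brown reduction are routine and run exactly as in the model-category case.
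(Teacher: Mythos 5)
Your argument is correct and follows the same route as the paper: reduce to a trivial fibration via the fibrant version of Ken Brown's lemma, obtain surjectivity from projectivity of the cofibrant object $B$ relative to trivial fibrations, and obtain injectivity by choosing (via Lemma~\ref{lemma: specific homotopies}\,(2)) a cylinder object whose structure map is a homotopy cofibration and then lifting against the trivial fibration using the $\bsq$-property. The only difference is cosmetic: you spell out the well-definedness and the final appeal to Corollary~\ref{corollary: homotopies coincide}, which the paper leaves implicit.
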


\begin{proof}
We note that the map $h_\ast$ is well-defined.
By Ken Brown's lemma, it is enough to consider the case where $h$ is a trivial fibration.
We thus assume that $h$ is a trivial fibration.
The map $h_\ast$ is surjective since $B$ is cofibrant and $h$ is a trivial fibration.
The map $h_\ast$ is injective: Let $f,g$ be two morphisms in $\cat(B,X)$.
Assume that $hf\sim hg$. Then, by Lemma~\ref{lemma: specific homotopies}\,(2),
there is a commutative diagram as on the left-hand side of:
\begin{center}
\begin{tikzpicture}
\coordinate (bb) at (0,2) ;
\coordinate (y) at (-2,0) ;
\coordinate (b') at (0,0) ;
\coordinate (b) at (2,0) ;
\coordinate (U) at (0,0.3) ;
\coordinate (D) at (0,-0.3) ;
\coordinate (L) at (-0.3,0) ;
\coordinate (R) at (0.3,0) ;
\draw (bb) node{$B\sqcup B$} (y) node{$Y$} (b') node{$B'$} (b) node{$B$} ;
\draw[->] ($(bb)+(D)+(L)$) --($(y)+(U)+(R)$) node[pos=.6, above left]{$[hf\;hg]$} ;
\draw[htpcof] ($(b')+(U)$) --($(bb)+(D)$) node[pos=.35, right]{$[i_0\;i_1]$} ;
\draw[->] ($(bb)+(D)+(R)$) --($(b)+(U)+(L)$) node[midway, above right]{$\nabla$} ;
\draw[->] ($(b')+(L)$) --($(y)+(R)$) node[midway, below]{$H$} ;
\draw[->] ($(b')+(R)$) --($(b)+(L)$) node[midway, above]{$\sim$} node[midway, below]{$s$} ;
\begin{scope}[xshift=5cm]
\coordinate (U) at (0,0.3) ;
\coordinate (D) at (0,-0.3) ;
\coordinate (L) at (-0.3,0) ;
\coordinate (R) at (0.3,0) ;
\coordinate (bb) at (0,2) ;
\coordinate (b') at (0,0) ;
\coordinate (x) at (2,2) ;
\coordinate (y) at (2,0) ;
\draw (bb) node{$B\sqcup B$} (x) node{$X$} (b') node{$B'$} (y) node{$Y.$} ;
\draw[->] ($(bb)+(R)+(R)$) --($(x)+(L)$) node[midway, above]{$[f\;g]$} ;
\draw[htpcof] ($(b')+(U)$) --($(bb)+(D)$) node[midway, left]{$[i_0\;i_1]$} ;
\draw[->] ($(b')+(R)$) --($(y)+(L)$) node[midway, below]{$H$} ;
\draw[->>] ($(x)+(D)$) --($(y)+(U)$) node[midway, right]{$\wr\;\;h$} ;
\draw[->, dotted] ($(b')+(U)+(R)$) --($(x)+(L)+(D)$) node[midway, left]{$\overline{H}$} node[midway, below right]{$\sim^l$} ;
\end{scope}
\end{tikzpicture}
\end{center}
The square on the right-hand side commutes so that there is a morphism $\overline{H}$ with
$\overline{H}[i_0\;i_1] = [f\;g]$ and $h\overline{H} \sim^l H$. This shows that $f$ is left homotopic to $g$.
\end{proof}

\begin{lemma}\label{lemma: htp to weq implies weq}
 Let $B$ be cofibrant, $X$ be fibrant and $B\stackrel{f}{\fl}X$ be a morphism.
If $f$ is homotopic to a weak equivalence, then $f$ is a weak equivalence.
\end{lemma}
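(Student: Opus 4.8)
The plan is to extract from the given homotopy a cylinder object carrying it, and then to apply the two-out-of-three property (Axiom (1)) twice.

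First I would unwind the hypothesis. Pick a weak equivalence $B\stackrel{g}{\gfl}X$ with $f\sim g$; by the definition of $\sim$ this gives in particular $f\sim^l g$, so there is a cylinder object $B\coprod B\stackrel{[i_0\,i_1]}{\gfl}B'\stackrel{w}{\gfl}B$ for $B$ — whence $w$ is a weak equivalence and $w[i_0\,i_1]=\nabla$ — together with a left homotopy $B'\stackrel{H}{\gfl}X$ satisfying $Hi_0=f$ and $Hi_1=g$. (Using Axiom (4) and Lemma~\ref{lemma: i0 htpcof} one could further arrange that $[i_0\,i_1]$ be a homotopy cofibration and that each of $i_0,i_1$ already be a weak equivalence, but this refinement is not needed below; alternatively, since $B$ is cofibrant and $X$ is fibrant, Corollary~\ref{corollary: homotopies coincide} lets one start from a right homotopy and run the dual argument with a path object.)

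Next I would observe that $wi_0=wi_1=1_B$, since composing $w[i_0\,i_1]=\nabla$ with the two canonical inclusions $B\fl B\coprod B$ yields exactly these identities. As $w$ and $1_B$ are weak equivalences, the two-out-of-three property forces both $i_0$ and $i_1$ to be weak equivalences. Now apply two-out-of-three to $H\circ i_1=g$: since $g$ and $i_1$ are weak equivalences, so is $H$. Finally apply two-out-of-three to $H\circ i_0=f$: since $H$ and $i_0$ are weak equivalences, so is $f$, which is what we wanted.

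I do not expect a genuine obstacle: the argument is a purely formal, two-step use of two-out-of-three. The only point demanding any care is the first one, namely that a left homotopy witnessing $f\sim g$ really does furnish a cylinder object whose structure map $w$ is a weak equivalence — which is built into the definition of a left homotopy. It is worth noting that, once $f\sim^l g$ is in hand, neither the cofibrancy of $B$ nor the fibrancy of $X$ is used; these hypotheses only serve to make ``homotopic'' a well-behaved relation in the contexts where the lemma is invoked.
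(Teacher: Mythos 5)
Your proof is correct and follows essentially the same route as the paper: extract a cylinder object from the left homotopy, deduce that $i_0$ and $i_1$ are weak equivalences from $si_0=si_1=1_B$ via two-out-of-three, and then apply two-out-of-three twice more to $Hi_1=g$ and $Hi_0=f$. Your closing observation that the cofibrancy and fibrancy hypotheses are not actually used once a left homotopy is in hand is also accurate and matches the paper's proof.
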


\begin{proof}
Let $B\coprod B \stackrel{[i_0\;i_1]}{\gfl} B' \stackrel{s}{\gfl} B$ be a cylinder object for $B$
(so that $s$ is a weak equivalence),
and let $B'\stackrel{H}{\gfl}X$ be a left homotopy from $f$ to some weak equivalence $w$.
Then $si_0=1 = si_1$ so that $i_0$ and $i_1$ are weak equivalences by the two-out-of-three property.
Since $Hi_1 = w$, the morphism $H$ is a weak equivalence, and the equality $Hi_0 =f$ implies that
$f$ is a weak equivalence.
\end{proof}

We recall that a morphism $f$ is a homotopy equivalence if there is some $g$ such that
both $fg$ and $gf$ are homotopic to the respective identities.

\begin{prop}\label{proposition: weq and htpeq}
Let $X,Y$ be cofibrant and fibrant, and let $X\stackrel{f}{\fl}Y$ be a morphism.
Then $f$ is a weak equivalence if and only if it is a homotopy equivalence.
\end{prop}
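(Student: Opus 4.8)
The plan is to prove the two implications separately. The implication ``weak equivalence $\Rightarrow$ homotopy equivalence'' will be a short consequence of the bijection statements already proved; so let $f\colon X\fl Y$ be a weak equivalence with $X,Y$ cofibrant and fibrant. First I would apply the (unlabelled) Proposition immediately preceding Lemma~\ref{lemma: htp to weq implies weq}, with $h=f$ and $B=Y$, to see that $f_\ast\colon\cat(Y,X)/\!\sim\,\fl\cat(Y,Y)/\!\sim$ is bijective; surjectivity produces $g\colon Y\fl X$ with $fg\sim 1_Y$. By Lemma~\ref{lemma: htp to weq implies weq}, $fg$ is a weak equivalence, hence so is $g$ by two-out-of-three. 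Applying the same Proposition to $h=g$, $B=X$ yields $g'\colon X\fl Y$ with $gg'\sim 1_X$, and since homotopy is compatible with composition in $\cat_{cf}$ (Theorem~\ref{theorem: Quillen}), $g'\sim(fg)g'=f(gg')\sim f$, whence $gf\sim gg'\sim 1_X$; together with $fg\sim 1_Y$ this exhibits $f$ as a homotopy equivalence. (One could instead observe that this direction is formal from Theorem~\ref{theorem: Quillen}~(3): $f$ becomes invertible in $\hoc$, and a fully faithful functor reflects isomorphisms.)

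For the reverse implication, let $f\colon X\fl Y$ be a homotopy equivalence with $X,Y$ cofibrant and fibrant. The plan is to reduce to a fibration: I would factor $f=pi$ by Axiom~(4), with $i\colon X\fl W$ a trivial cofibration and $p\colon W\fl Y$ a fibration. Then $W$ is cofibrant (compose $\emptyset\fl X\fl W$) and fibrant (compose $W\fl Y\fl\ast$), using closure of $\Cof$ and $\Fib$ under composition. By the forward implication $i$ is a homotopy equivalence, and hence so is $p$: if $i',f'$ are homotopy inverses of $i,f$, a direct check using compatibility of homotopy with composition in $\cat_{cf}$ shows $if'$ is a homotopy inverse of $p$. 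Since $f=pi$, it then suffices to prove that the fibration $p$ is a weak equivalence.

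For that last step I would argue as follows. Pick a homotopy inverse $s\colon Y\fl W$ of $p$, so $ps\sim 1_Y$ and $sp\sim 1_W$, and a cylinder object $Y\coprod Y\htpcof Y'\stackrel{\sim}{\fl}Y$ for $Y$ (Axiom~(4)); by Lemma~\ref{lemma: specific homotopies}~(2) it may be chosen so that its structure map is a trivial fibration, whence $Y'$ is cofibrant and fibrant, and by Lemma~\ref{lemma: i0 htpcof} the inclusion $i_0\colon Y\fl Y'$ is a weak equivalence \emph{and} a homotopy cofibration. Using $ps\sim 1_Y$ and Corollary~\ref{corollary: homotopies coincide}, there is a left homotopy $H\colon Y'\fl Y$ with $Hi_0=ps$ and $Hi_1=1_Y$; the square with edges $s,i_0,H,p$ commutes, and $i_0\hsq\Fib$ by Proposition~\ref{proposition: whtpcof implies htpwcof}~(2), so I obtain $\tilde s\colon Y'\fl W$ with $p\tilde s=H$ and $\tilde s i_0\sim^r s$. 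Setting $s':=\tilde s i_1$ gives an \emph{honest} section: $ps'=Hi_1=1_Y$. Since $i_0\sim i_1$ in $\cat(Y,Y')$ --- both are sections of the weak equivalence $Y'\fl Y$ between cofibrant-fibrant objects, hence identified after passing to $/\!\sim$ by the Proposition preceding Lemma~\ref{lemma: htp to weq implies weq} --- one gets $s'=\tilde s i_1\sim\tilde s i_0\sim s$, so $s'p\sim sp\sim 1_W$ and $s'p$ is a weak equivalence by Lemma~\ref{lemma: htp to weq implies weq}. Finally $ps'=1_Y$ exhibits $p$ as a retract of $s'p$ in the category of morphisms of $\cat$ (with retraction legs $(1_W,s')\colon p\to s'p$ and $(1_W,p)\colon s'p\to p$), so $p\in\wc$ by Axiom~(2), and $f=pi$ is a composite of weak equivalences.

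I expect the main obstacle to be precisely this last step. Only two-out-of-three is available, not two-out-of-six, so from $ps\sim 1$ and $sp\sim 1$ one cannot directly conclude that $p$ is a weak equivalence --- equivalently, one must not assume that a map which becomes invertible in $\hoc$ is already a weak equivalence. The device that replaces the missing axiom is to upgrade the homotopy section $s$ to a strict section $s'$ of $p$, which is possible because the trivial homotopy cofibration $i_0$ enjoys the lifting property $\hsq\Fib$ (Proposition~\ref{proposition: whtpcof implies htpwcof}); once $ps'=1_Y$ holds on the nose, $p$ is literally a retract of the weak equivalence $s'p$. A secondary, purely bookkeeping matter is that all the auxiliary objects constructed ($W$ and $Y'$) must be verified to be cofibrant \emph{and} fibrant before the forward implication, Corollary~\ref{corollary: homotopies coincide}, Lemma~\ref{lemma: htp to weq implies weq} and Lemma~\ref{lemma: specific homotopies}~(2) can be invoked on them; this is exactly where the closure of $\Cof$ and $\Fib$ under composition (Axiom~(2)) and the refinement in Lemma~\ref{lemma: specific homotopies}~(2) are used.
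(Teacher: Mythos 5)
Your proof is correct and takes essentially the same approach as the paper's: the forward direction via bijectivity of the induced maps on homotopy classes, and the reverse direction by factoring $f$ into a trivial cofibration followed by a fibration $p$, using the lifting property $\hsq$ from Proposition~\ref{proposition: whtpcof implies htpwcof} to upgrade the homotopy section of $p$ to a strict section, and then exhibiting $p$ as a retract of a weak equivalence. The only cosmetic differences are that you use the pushforward bijection twice (where the paper uses $f^\ast$ of Proposition~\ref{proposition: h ast bijective}, once for surjectivity and once for injectivity), you deduce $i_0\sim i_1$ from that bijection rather than tracking $\sim^l,\sim^r$ composites as the paper does, and your invocation of Lemma~\ref{lemma: specific homotopies}~(2) for the cylinder object is superfluous since the second factorisation of Axiom~(4) (applicable as $Y\coprod Y$ is cofibrant) already produces a homotopy cofibration followed by a trivial fibration directly.
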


\begin{proof}
Assume first that $f$ is a weak equivalence. By Proposition~\ref{proposition: h ast bijective}
and Corollary~\ref{corollary: homotopies coincide} the map $\cat(Y,X)/\!\sim\;\stackrel{f^\ast}{\gfl}\cat(X,X)/\!\sim$
is surjective. In particular, there is a morphism $Y\stackrel{g}{\fl}X$ such that $gf\sim 1$.
This implies, by Lemma~\ref{lemma: htp and composition}, $fgf\sim f$.
By Proposition~\ref{proposition: h ast bijective} and Corollary~\ref{corollary: homotopies coincide},
the map $\cat(Y,Y)/\!\sim\;\stackrel{f^\ast}{\gfl}\cat(X,Y)/\!\sim$ is injective
so that $fg\sim 1$ and $g$ is homotopy inverse to $f$.

Conversely, assume that $f$ is a homotopy equivalence.
Factor $f$ as a trivial cofibration $g : X \stackrel{\sim}{\cof} Z$ followed by a fibration $p : Z \fib Y$.
Let $f'$ be a homotopy inverse for $f$. By the first part of the proof, the
weak equivalence $g$ has some homotopy inverse $g'$.
We have $p\sim pgg' \sim fg'$, so that $gf'p \sim 1$ and, by Lemma~\ref{lemma: htp to weq implies weq},
$gf'p$ is a weak equivalence. If $p$ were a retract of $gf'p$, then it would be a weak equivalence and so would $f$ be.
In the diagram:
\[
\xymatrix{
Z \dreg \bas_p & Z \dreg \bas_{gf'p} & Z \bas^p \\
Y \dr^{gf'} & Z \dr^{fg'} & Y,
}
\]
only the left-hand square commutes. The right-hand square only commutes up to homotopy and the
composition of the bottom row is only homotopic to the identity. The idea is thus to replace
$gf'$ by some homotopic morphism for which the equalities will hold on the nose.
Let $Y'\stackrel{\widetilde{H}}{\gfl}Y$ be a left homotopy from $ff'$ to 1,
where $Y\coprod Y \stackrel{[i_0\;i_1]}{\htpcof} Y' \stackrel{\sim}{\gfl}Y$ is a cylinder object for $Y$.
Since the square:
\begin{center}
\begin{tikzpicture}
\coordinate (U) at (0,0.3) ;
\coordinate (D) at (0,-0.3) ;
\coordinate (L) at (-0.3,0) ;
\coordinate (R) at (0.3,0) ;
\coordinate (y) at (0,2) ;
\coordinate (y') at (0,0) ;
\coordinate (z) at (2,2) ;
\coordinate (Y) at (2,0) ;
\draw (y) node{$Y$} (z) node{$Z$} (y') node{$Y'$} (Y) node{$Y$} ;
\draw[->] ($(y)+(R)$) --($(z)+(L)$) node[midway, above]{$gf'$} ;
\draw[htpcof] ($(y')+(U)$) --($(y)+(D)$) node[midway, left]{$i_0$} node[midway, right]{$\wr$} ;
\draw[->] ($(y')+(R)$) --($(Y)+(L)$) node[midway, below]{$\widetilde{H}$} ;
\draw[->>] ($(z)+(D)$) --($(Y)+(U)$) node[midway, right]{$p$} ;
\draw[->, dotted] ($(y')+(U)+(R)$) --($(z)+(L)+(D)$) node[midway, right]{$H$} node[midway, above left]{$\sim^r$} ;
\end{tikzpicture}
\end{center}
commutes, Proposition~\ref{proposition: whtpcof implies htpwcof} gives a morphism $H$ such that $pH=\widetilde{H}$ and
$Hi_0\sim^r gf'$. We have $Hi_1 \sim^l Hi_0 \sim^r gf'$, and
$Hi_1 p \sim gf'fg' \sim 1$. By Lemma~\ref{lemma: htp to weq implies weq}, $Hi_1p$
is a weak equivalence. Moreover, the diagram:
\[
\xymatrix{
Z \dreg \bas_p & Z \dreg \bas_{Hi_1p} & Z \bas^p \\
Y \dr_{Hi_1} & Z \dr_{p} & Y,
}
\]
commutes and we have $pHi_1 = \widetilde{H}i_1 = 1$. Therefore, $p$ is a retract of
a weak equivalence: It is a weak equivalence, and so is $f$.
\end{proof}

\subsection{Cofibrantly generated weak model structures}\label{ssection: cof gene}

When trying to find a model category structure, one usually knows what the weak equivalences are,
and what the cofibrant (or fibrant) objects should be. It seems quite difficult in general to
define some classes of fibrations and cofibrations. The usual trick is the following one:
In a model category, the fibrations are exactly those morphisms that have the right lifting property
with respect to all trivial cofibrations. Similarly, the trivial fibrations are those morphisms that have
the right lifting property with respect to all cofibrations. It turns out that it is often enough,
in order for a morphism to be a fibration (resp. trivial fibration), that it has the right lifting property with
respect to some subset $J$ (resp. $I$) of the trivial cofibrations (resp. cofibrations) only.
Lifting property with respect to all trivial cofibrations (resp. cofibrations) is then automatic.
Model category structures are thus often defined via such sets $I$, $J$. Morphisms in $I$ are
called generating cofibrations and morphisms in $J$ generating trivial cofibrations.

Given two sets $I$ and $J$, one can define a fibration to be a morphism in $J^\square$.
Morally the class of trivial fibrations should be $I^\square$, so that one defines
the class of cofibrations to be $\prescript{\square}{}{(I^\square)}$. A few properties
have to be satisfied in order to ensure these definitions to give rise to a model category structure
and the existence of factorisations then comes from the so-called small object argument. A model category
structure defined in that way is called \emph{cofibrantly generated}.

In the setup of Section~\ref{section: w-model str from rigids}, the strategy is similar to the one described above.
The main difference is that the small object argument cannot apply in a category without limits and colimits. The factorisations
are thus given explicitely, and this is the reason for introducing a weaker version of the factorisation axiom.

\begin{defi}\rm
A left-weak model category structure is called \emph{cofibratly generated} if there are two sets
of morphisms $I$ and $J$ such that:
$\Fib = J^\square$, $\wc\cap\Fib = I^\square$ and $\Cof = \prescript{\square}{}{(I^\square)}$.
Morphisms in $I$ (resp. $J$) are then called \emph{generating cofibrations} (resp. \emph{generating trivial cofibrations}).
\end{defi}

\begin{rk}
\begin{itemize}
 \item[(a)] This definition is a bit improper, since the sets $I$ and $J$ are not required to permit the small object argument.
 \item[(b)] Unlike the case of model category structures, the condition on cofibrations has to be added since it does not
follow from the axioms. 
\end{itemize}

\end{rk}

The following proposition is a much weaker version of a theorem of Dan Kan (see e.g. \cite[Theorem 2.1.19]{Hovey}).

\begin{prop}\label{proposition: cof gene}
Let $\cat$ be a category with finite products and coproducts.
Let $\wc$ be a subcategory of $\cat$, and let $I$ and $J$ be two sets of morphisms in $\cat$.
Let $\Fib$, $w\Fib$, $\Cof$ and $w\,\Cof$ be defined as follows:
$\Fib = J^\square$, $w\Fib = I^\square$, $\Cof = \prescript{\square}{}{w\Fib}$
and $w\,\Cof = \prescript{\square}{}{\Fib}$.
Then the classes $\wc$, $\Fib$, $\Cof$ define a left-weak model category structure on $\cat$,
which is cofibrantly generated by $I$ and $J$,
if and only if the following properties are satisfied:
\begin{enumerate}
 \item There exist pull-backs of morphisms in $w\Fib$ along epimorphisms;
 \item The class $\wc$ is closed under retracts and satisfies the two-out-of-three property;
 \item $w\,\Cof \subseteq \Cof\cap\wc$;
 \item $w\Fib = \Fib\cap\wc$;
 \item Any morphism can be factored out as a morphism in $w\,\Cof$ followed
 by a morphism in $\Fib$.
 \item For any object $A$ such that the canonical morphism $\emptyset\fl A$ belongs to
 $\Cof$, any morphism with domain $A$ can be factored out as a morphism in $\prescript{\hsq}{}{w\Fib}\cap\prescript{\bsq}{}{w\Fib}$
 followed by a morphism in $w\Fib$.
\end{enumerate}
\end{prop}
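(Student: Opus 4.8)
The plan is to verify that properties (1)--(6) are together equivalent to "the classes $\wc,\Fib,\Cof$ (with $\Fib=J^\square$, $\Cof=\prescript{\square}{}{w\Fib}$) define a cofibrantly generated left-weak model category structure". One direction ("only if") is essentially bookkeeping: assuming the structure is a cofibrantly generated left-weak model category, property (1) is Axiom (0), property (2) is Axiom (2) together with Axiom (1), property (4) is Lemma~\ref{lemma: first ppties}(c) combined with the defining equality $\wc\cap\Fib=I^\square$, property (3) follows from Axiom (3) (cofibrations lift against trivial fibrations, and $\wc\cap\Cof\subseteq\Cof$) together with the two-out-of-three property, property (5) is the first factorisation of Axiom (4) (using $w\Cof\subseteq\Cof\cap\wc\subseteq\prescript{\square}{}{\Fib}$ to identify trivial cofibrations), and property (6) is the second factorisation of Axiom (4), observing that a homotopy cofibration out of a cofibrant object is exactly a morphism in $\prescript{\hsq}{}{w\Fib}\cap\prescript{\bsq}{}{w\Fib}$ once we know $\wc\cap\Fib=w\Fib$.

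The substantive direction is "if". Assume (1)--(6). The first task is to check Axioms (0)--(2). Axiom (0) has two halves: existence of finite (co)products is a hypothesis, and "pull-backs of trivial fibrations along epimorphisms exist and are trivial fibrations" is property (1) together with property (4) (which gives $w\Fib=\Fib\cap\wc$, so a trivial fibration is exactly a morphism in $w\Fib$), plus Lemma~\ref{lemma: first ppties}(a) applied to the left-$\square$ characterisation of $w\Fib=I^\square$ to see the pull-back again lies in $I^\square$. Axiom (1) is the two-out-of-three half of property (2). Axiom (2): $\wc$ is closed under retracts by property (2); $\Fib=J^\square$ and $\Cof=\prescript{\square}{}{w\Fib}$ contain identities and are closed under composition and retracts by general nonsense about lifting-property classes (retract-closure of $\ac^\square$ and $\prescript{\square}{}{\ac}$ is standard and uses nothing beyond the definition). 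Axiom (3): the inclusion $\wc\cap\Cof\subseteq\prescript{\square}{}{\Fib}$ is property (3) (which gives $w\Cof=\prescript{\square}{}{\Fib}\subseteq\Cof\cap\wc$) read backwards together with property (4); and $\Cof\subseteq\prescript{\square}{}{(\Fib\cap\wc)}=\prescript{\square}{}{w\Fib}$ is the definition of $\Cof$. Axiom (4): the first factorisation is property (5), where one must check the "weak cofibration" produced is genuinely in $\Cof\cap\wc$ — it is in $w\Cof=\prescript{\square}{}{\Fib}$ by construction, and property (3) upgrades this to $\Cof\cap\wc$. For the second factorisation one needs, for $A$ cofibrant, that property (6) produces a homotopy cofibration followed by a weak fibration: a morphism in $w\Fib$ is a weak fibration by property (4), and a morphism in $\prescript{\hsq}{}{w\Fib}\cap\prescript{\bsq}{}{w\Fib}=\prescript{\hsq}{}{(\wc\cap\Fib)}\cap\prescript{\bsq}{}{(\wc\cap\Fib)}$ is exactly a homotopy cofibration by definition.

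Then I would check Axiom (0.5), which (per the remark after Definition~\ref{Definition: weak-model}) does not follow formally and must be argued by hand. The second half — if $B$ is cofibrant then $A\fl A\coprod B$ and $A\fl B\coprod A$ are cofibrations — is proved by a direct lifting argument: given a square from $A\fl A\coprod B$ to a trivial fibration $p\in w\Fib$, the left edge restricted along $\emptyset\fl B$ together with cofibrancy of $B$ (i.e. $\emptyset\fl B$ lifts against $p$) supplies the component on $B$, and on $A$ one uses the given top map, assembling a lift by the universal property of the coproduct; one checks both triangles commute componentwise. The first half — stability of cofibrancy under homotopy cofibrations out of a cofibrant object — is the more delicate point and is the step I expect to be the main obstacle, since it is where the weakened factorisation bites. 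The idea is: given $A$ cofibrant and $A\htpcof B$, factor $\emptyset\fl B$ via property (6) applied to... no — rather factor $A\fl B$ itself is already a homotopy cofibration, and one wants $\emptyset\fl B\in\Cof=\prescript{\square}{}{w\Fib}$; compose $\emptyset\fl A\fl B$ and show the composite lifts against every $p\in w\Fib$, using that $\emptyset\fl A$ lifts against $p$ (cofibrancy of $A$) and that $A\htpcof B$ has the property $\bsq\,(\wc\cap\Fib)$, which yields a lift on $B$ agreeing with the chosen one on $A$ up to left homotopy; the mismatch is then corrected by a homotopy-lifting argument against the fibration $p$, exactly as in the proof of Proposition~\ref{proposition: whtpcof implies htpwcof}. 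This is the one place where I anticipate needing to reproduce a genuine (short) argument rather than cite a prior lemma, and care is needed because homotopy is only known to be well-behaved between cofibrant and fibrant objects at this stage. Finally, having verified Axioms (0)--(4) and (0.5), the structure is a left-weak model category, and it is cofibrantly generated by $I,J$ by the very definitions $\Fib=J^\square$, $\wc\cap\Fib=\Fib\cap\wc=w\Fib=I^\square$ (using property (4)), $\Cof=\prescript{\square}{}{(I^\square)}$.
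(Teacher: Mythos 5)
Your overall strategy — verify the left-weak model axioms from conditions (1)--(6) and conversely — matches the paper's, and most of the bookkeeping is fine. But there is a genuine gap in your verification of Axiom (3).

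You claim the inclusion $\wc\cap\Cof\subseteq\prescript{\square}{}{\Fib}$ is ``property (3) read backwards together with property (4).'' This is not correct: property (3) asserts only $w\Cof=\prescript{\square}{}{\Fib}\subseteq\Cof\cap\wc$, which is the \emph{opposite} containment, and a subset inclusion cannot be ``read backwards.'' The equality $w\Cof=\Cof\cap\wc$ (noted in the remark following the proposition) is a \emph{consequence} of conditions (1)--(6), not a hypothesis, and the missing containment is exactly the content of Axiom (3). The paper derives it by a retract argument: given $f\in\wc\cap\Cof$, factor $f=p\circ i$ with $i\in w\Cof$ and $p\in\Fib$ (condition (5)); by (3), $i\in\wc$, so by two-out-of-three $p\in\Fib\cap\wc=w\Fib$ (condition (4)); since $f\in\Cof=\prescript{\square}{}{w\Fib}$, $f$ lifts against $p$, whence $f$ is a retract of $i\in\prescript{\square}{}{\Fib}$, and left-lifting classes are closed under retracts. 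Without this argument your verification of Axiom (3) fails.

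On Axiom (0.5) you have the right general idea, though the paper's route is cleaner in two places. For the coproduct inclusions, the paper simply observes that $A\fl A\coprod B$ is the push-out of the cofibration $\emptyset\fl B$, and left-lifting classes are stable under push-out — shorter than assembling a lift componentwise. For preservation of cofibrancy under homotopy cofibrations, you describe a $\bsq$-then-correct argument ``as in Proposition~\ref{proposition: whtpcof implies htpwcof}''; the paper instead uses the $\hsq$ half of the definition directly (given $p\in w\Fib$ and $a:B\fl Y$, lift $ai$ through $p$ using cofibrancy of $A$, then use $i\hsq(\wc\cap\Fib)$ to produce $\alpha$ with $p\alpha=a$ on the nose), which makes $B$ projective against trivial fibrations immediately, with no correction step. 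Your version may be made to work but is more delicate than necessary and risks needing homotopy facts that are not yet available at that stage; the $\hsq$ route avoids this.
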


\begin{rk}
\begin{itemize}
 \item Note that, in order to define left homotopy and right homotopy, one only has to choose a subcategory of weak equivalences
so that condition (6) makes sense.
 \item If conditions (1) to (6) are satisfied, then we also have $w\,\Cof = \Cof\cap\wc$.
\end{itemize}
\end{rk}

\begin{proof}
Let $\cat$ have finite products and coproducts.
For any left-weak model category structure $(\wc,\Fib,\Cof)$ on $\cat$, the following properties are satisfied:
(1), (2), $\prescript{\square}{}{\Fib} = \Cof\cap\wc$, $\Fib\cap\wc\subseteq\Cof^\square$ and (5).
If moreover the left-weak model category structure is cofibrantly generated, then (4), and thus also (6), holds.
Let us check that conditions (1) through (6) imply that the classes $\wc,\Fib,\Cof$ defined in the statement of the theorem
define a left-weak model structure which is cofibrantly generated by $I$ and $J$.

\noindent \emph{Axioms} (0) and (1) are required to hold by assumption (note that $w\Fib$ is stable under pullbacks).

\noindent \emph{Axiom} (0.5): Since cofibrations are defined by some left lifting property,
they are stable under taking push-outs. The square
\[
\xymatrix{
\emptyset \dr \bas & B \bas \\
A \dr & A\sqcup B
}
\]
being a push-out, the inclusion $A\fl A\coprod B$ is a cofibration if $A$ is cofibrant.
Let $A$ be cofibrant and let $A\htpcof B$ be a homotopy cofibration. For any
trivial fibration $X \fl Y$ and any morphism $B\fl Y$, there is a diagram:
\begin{center}
\begin{tikzpicture}
\coordinate (U) at (0,0.3) ;
\coordinate (D) at (0,-0.3) ;
\coordinate (L) at (-0.3,0) ;
\coordinate (R) at (0.3,0) ; 
\coordinate (a) at (0,2) ;
\coordinate (b) at (0,0) ;
\coordinate (x) at (2,2) ;
\coordinate (y) at (2,0) ;
\draw (a) node{$A$} (x) node{$X$} (b) node{$B$} (y) node{$Y.$} ;
\draw[->, dashed] ($(a)+(R)$) --($(x)+(L)$) node[midway, above]{$b$} ;
\draw[htpcof] ($(b)+(U)$) --($(a)+(D)$) node[midway, left]{$i$} ;
\draw[->] ($(b)+(R)$) --($(y)+(L)$) node[midway, below]{$a$} ;
\draw[->>] ($(x)+(D)$) --($(y)+(U)$) node[midway, right]{$p$} node[midway, left]{$\wr$};
\draw[->, dotted] ($(b)+(U)+(R)$) --($(x)+(L)+(D)$) node[midway, right]{$\al$} node[midway, above left]{$\sim^r$} ;
\end{tikzpicture}
\end{center}
Indeed, $A$ is cofibrant so that there is a morphism $b$ such that $pb = ai$,
and $i$ is a homotopy cofibration, so that there is some $\al$ such that
$p\al = a$ and $\al i \sim^r b$. In particular, $B$ is projective relative to all trivial fibrations.
By assumption, $w\Fib \subseteq \Fib\cap\wc$, so that $B$ is cofibrant.

\noindent \emph{Axiom} (2): By assumption, the class $\wc$ is stable
under compositions and under retracts. This is also true for $\Cof$ and $\Fib$ since
any left-$\square$ or right-$\square$ is stable under compositions and retracts.

\noindent \emph{Axiom} (3):
We have $\Cof \subseteq \prescript{\square}{}{(\Fib\cap\wc)}$ since $J^\square\cap\wc\subseteq I^\square$.
Let $f$ be a morphism in $\wc\cap\Cof$ and consider a factorisation of $f$ as a morphism $i$ in $w\,\Cof$ followed
by a morphism $p$ in $\Fib$. We have $w\,\Cof\subseteq\wc$ so that, by the two-out-of-three property, the morphism $p$ is in $\wc$.
Since $Fib\cap\wc\subseteq w\Fib$ and $f\in\Cof$, $f$ has the left lifting property with respect to $p$. It follows that
$f$ is a retract of $i$, so that $f$ belongs to $\prescript{\square}{}{\Fib}$.

\noindent \emph{Axiom} (4) follows from conditions (5) and (6) since $w\,\Cof\subseteq \Cof\cap\wc$ and $w\Fib \subseteq \Fib\cap\wc$.
\end{proof}

\section{Left-weak model category structures associated with rigid subcategories}\label{section: w-model str from rigids}

In this section, we show that there is a left-weak model category structure
on any triangulated category with a given contravariantly finite, rigid subcategory.

\subsection{Statement of main result}

In this section, $\cat$ is a triangulated category with a
contravariantly finite, rigid subcategory $\tc$.
Without loss of generality, $\tc$ is also assumed strictly full, and
stable under taking direct summands.

We refer to the beginning of \cite{Happel} or to the first chapter of~\cite{HJR}
for soft introductions to the basics of triangulated categories.

\begin{rk}
Most of the proof of Theorem~\ref{theorem: main} would hold
for $\tc$ extension-closed (instead of rigid), by replacing the use of Lemma~\ref{lemma: ideal}
by the use of triangulated Wakamatsu's lemma (\cite[Lemma 2.1]{JorgensenWaka}).
However, in order to hold in such a generality, the main statements would have to be deeply modified.
Indeed, it is a key property here that the ideal $(\shift\tc)$ is contained in $(\tc^\perp)$
(see Lemma~\ref{lemma: J-cof} for instance).
\end{rk}

If $\dc$ is a subcategory of $\cat$, we write $(\dc)$
for the ideal of morphisms factoring through some object in $\dc$,
and $\dc^\perp$ for the full subcategory of $\cat$ whose objects $X$ satisfy
$\cat(-,X)|_\dc = 0$. A morphism $D\stackrel{\al}{\fl} X$ is called a right $\dc$-approximation
if $D\in\dc$ and any morphism $D'\fl X$ with $D'\in\dc$ factors through $\al$.
There is the dual notion of a left-approximation.
A subcategory $\dc$ is called rigid if
$\shift\dc\subseteq\dc^\perp$. It is called contravariantly finite if, for any object $X\in\cat$,
there is a right $\dc$-approximation $D\fl X$.
Since the subcategories $(\shift\tc)^\perp$ and $\shift(\tc^\perp)$
coincide, the brackets will be omitted.
We write $\wc$ for the class of all those morphisms $X\stackrel{f}{\gfl} Y$
such that, for any (equivalently: some) triangle
$Z \stackrel{u}{\gfl} X \stackrel{f}{\gfl} Y \stackrel{v}{\gfl} \shift Z$,
the morphisms $u$ and $v$ are in $(\tperp)$.

\begin{theo}\label{theorem: main}
Let $\cat$ be a triangulated category with a
contravariantly finite, rigid subcategory $\tc$.
Then there is a left-weak model category structure on $\cat$
(as in Definition~\ref{Definition: weak-model})
with weak equivalences $\wc$, null-homotopic morphisms
$(\tc^\perp)$ and fibrant-cofibrant objects $\tc\ast\shift\tc$.
Moreover, if $\tc$ is skeletally small, then this left-weak model category structure
is cofibrantly generated, in the weak sense of Section~\ref{ssection: cof gene}.
\end{theo}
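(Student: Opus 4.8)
The strategy is to verify the six conditions of Proposition~\ref{proposition: cof gene} for the explicit choices $I=\bigcup\cat(R,A)$ (union over representatives of $R\in\tc$, $A\in\tc\ast\shift\tc$) and $J=\{0\fl\shift\tc\}$ (i.e. the morphisms $0\fl\shift T$ for $T\in\tc$), together with $\wc$ as defined above, and to identify separately the fibrant objects (all of them), the cofibrant objects ($\tc\ast\shift\tc$), and the homotopy relation (difference factors through $\add\shift\tc$). First I would establish the basic dictionary between the triangulated structure and the candidate classes: a morphism $X\fl Y$ lies in $\wc$ iff $\cat(T,-)$ sends it to an isomorphism for all $T\in\tc$ (this is the analogue of Lemma~\ref{lemma: key lemma} and the observation $(\shift\tc)\subseteq(\tc^\perp)$, which the paper flags as crucial); that $J^\square$ consists of those $p$ such that every $\shift T\fl\operatorname{cone}(p)$ vanishes, i.e. $\cat(\shift T,-)$-related conditions, giving $\Fib$; and that $I^\square$ cuts out the trivial fibrations. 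The key structural input is the Iyama--Yoshino-type description (Theorem~\ref{theorem: IY}), which tells us that $\cat(T,-)$ restricted to $\tc\ast\shift\tc$ realizes $\operatorname{mod}\tc$ with kernel the ideal $(\shift\tc)$; I would lean on this, and on the existence of right $\tc$-approximations, throughout.

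\textbf{Conditions (1)--(4).} Condition~(2), that $\wc$ is closed under retracts and has two-out-of-three, follows from the characterization of $\wc$ via the exact functors $\cat(T,-)$: isomorphisms have two-out-of-three and are retract-closed, and one checks the triangulated reformulation is equivalent. Condition~(1), existence of pullbacks of trivial fibrations along epimorphisms, is where the weak (co)limit structure of a triangulated category is used: I would build the pullback from a homotopy-cartesian square (the standard octahedral/homotopy-pullback construction) and check that when $p\in w\Fib$ and the other leg is epi, the resulting square is genuinely cartesian and the pulled-back morphism still lies in $I^\square$ — this uses that $I^\square$ is a lifting class and that epimorphisms let one rigidify the weak pullback into an honest one. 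For~(3), $w\,\Cof=\prescript{\square}{}{\Fib}\subseteq\Cof\cap\wc$: the inclusion into $\Cof=\prescript{\square}{}{w\Fib}$ is formal once $w\Fib\subseteq\Fib$, and membership in $\wc$ comes from a retract/two-out-of-three argument after factoring. Condition~(4), $w\Fib=\Fib\cap\wc$, is the heart of the matching between the two classes of lifting data: one inclusion uses $J\subseteq w\,\Cof$ (so $w\Fib\subseteq\Fib$) plus the $\cat(T,-)$-characterization of $\wc$; the reverse uses that a fibration which is a weak equivalence has the right lifting property against all of $I$, which I would prove by reducing, via Theorem~\ref{theorem: IY}, to a lifting problem in $\operatorname{mod}\tc$ where it becomes a splitting/surjectivity statement.

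\textbf{The two factorisations — Conditions (5) and (6).} These are where ``almost'' enters and where the explicit triangulated constructions replace the small object argument. For~(5), given $f\colon X\fl Y$, I would take a right $\tc$-approximation, form the relevant triangle, and cone off so as to produce $X\stackrel{i}{\fl}X'\stackrel{p}{\fl}Y$ with $i\in w\,\Cof$ (equivalently $i$ has left lifting against $\Fib$, checked via the triangle's connecting maps landing in $(\tc^\perp)$) and $p\in\Fib=J^\square$; this is essentially Lemma~\ref{lemma: key lemma} repackaged. For~(6), with $A$ cofibrant (so $A\in\tc\ast\shift\tc$), given $g\colon A\fl B$, I would factor through an object $B'\in\tc\ast\shift\tc$ built from a $\tc$-approximation of $B$, with $A\fl B'$ a homotopy cofibration and $B'\fl B$ in $w\Fib$; verifying the homotopy-cofibration property means checking both $\hsq$ and $\bsq$ against trivial fibrations, which I would do by translating to $\operatorname{mod}\tc$ via Theorem~\ref{theorem: IY} and using that there the lifting exists on the nose up to the ideal $(\shift\tc)$, i.e. up to homotopy. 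The identification of cofibrant objects with $\tc\ast\shift\tc$ then follows: any object receives a trivial cofibration (in fact a weak equivalence that is a homotopy cofibration) from an object of $\tc\ast\shift\tc$ by~(5)--(6) and Lemma~\ref{lemma: i0 htpcof}-type arguments, and conversely objects of $\tc\ast\shift\tc$ are cofibrant because $\emptyset=0\fl A$ is built by iterated cofibrations from $J$-cells. That all objects are fibrant is immediate once one checks $X\fl\ast=0$ lies in $J^\square$, i.e. every map $\shift T\fl 0$-cone lifts — which is the condition that $\cat(\shift T,\shift T)$-type obstructions vanish, automatic here. Finally, the homotopy relation on maps between fibrant-cofibrant objects is computed by choosing the cylinder object coming from the canonical triangle, and Corollary~\ref{corollary: homotopies coincide} plus a direct computation shows $f\simeq_{\mathrm{htp}}g$ iff $f-g$ factors through $\add\shift\tc$, matching the ideal $(\shift\tc)=(\tc^\perp)\cap(\tc\ast\shift\tc\text{-maps})$.

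\textbf{Main obstacle.} I expect the genuine difficulty to be Condition~(6): producing the homotopy cofibration factorisation explicitly and checking the \emph{two} weakened lifting properties $\hsq$ and $\bsq$, since a triangulated category has no actual pushouts and one must manufacture the needed lifts out of weak cokernels plus the approximation property, controlling everything modulo $(\shift\tc)$. Closely related, and the reason the axioms had to be weakened in the first place, is that the ``cofibration followed by trivial fibration'' factorisation genuinely need not exist here, so care is required to ensure that Proposition~\ref{proposition: cof gene} is applied only with the weakened Condition~(6) and that all downstream results from Section~\ref{ssection: homotopy} that we invoke (especially Proposition~\ref{proposition: whtpcof implies htpwcof}) are used only in the form that survives the weakening. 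The remaining conditions~(1)--(5) I expect to be routine given the triangulated toolkit and Theorems~\ref{theorem: IY} and the approximation hypothesis.
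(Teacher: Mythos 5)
Your overall strategy is the paper's: check the six conditions of Proposition~\ref{proposition: cof gene} for these $I$ and $J$, then separately identify fibrant objects, cofibrant objects and the homotopy relation. Your proposal to characterise $\wc$ as those $f$ for which $\cat(T,f)$ is an isomorphism for every $T\in\tc$ is correct (it is equivalent to the triangle-based definition via Lemma~\ref{lemma: ideal}) and gives a cleaner route to two-out-of-three and retract-closure than the paper's octahedral Lemma~\ref{lemma: wc'}; that part is a genuine simplification. Two smaller points: for condition (1) the paper uses that every epimorphism in a triangulated category is split, making the pullback a trivial direct-sum computation rather than a homotopy-cartesian argument; and for condition (5) you conflate the factorisation with the cofibrant replacement of Lemma~\ref{lemma: key lemma}. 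In the paper, condition (5) is the elementary factorisation $X\to X\oplus\shift T\to Y$ where $\shift T\to Y$ is a right $\shift\tc$-approximation (Corollary~\ref{corollary: first factorisation}); Buan--Marsh's $QY\to Y$ enters only inside the proof of condition (6) (Lemma~\ref{lemma: second factorisation}).

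The genuine gap is your plan to prove conditions (4) and (6) by ``translating to $\operatorname{mod}\tc$ via Theorem~\ref{theorem: IY}.'' The functor $\cat(T,-)$ is neither faithful nor full on $\cat$: it kills precisely the ideal $(\tc^\perp)$, and the Iyama--Yoshino equivalence only identifies $\ccf/(\shift\tc)$ with $\operatorname{mod}\tc$. A lift constructed in $\operatorname{mod}\tc$ therefore only yields a morphism $c$ in $\cat$ whose defect $fc-b$ lies in $(\tc^\perp)$, not a strict lift. Promoting this to an actual lift is exactly the nontrivial content of Lemmas~\ref{lemma: I-inj} and~\ref{lemma: second factorisation}: one needs (a) that for cofibrant domain the ideal $(\tc^\perp)$ agrees with $(\shift\tc)$ (Lemma~\ref{lemma: htp}), (b) the lifting property $J^\square$ to absorb the $(\shift\tc)$-defect along $f$, and (c) rigidity of $\tc$ to kill the composition of the correction with $g$. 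Your sketch does not surface any of these steps, and without them the argument stalls at a lift that commutes only modulo $(\tc^\perp)$. The same issue occurs for (6), where the two weakened lifting properties $\hsq$ and $\bsq$ each require one of the two triangles to commute \emph{on the nose}; the paper produces these exact commutativities directly from the $I^\square$ property of $p$ and the structure of the factorisation, not by passage to $\operatorname{mod}\tc$. So the plan as written would fail at precisely the two conditions you correctly identify as the crux; the repair essentially forces you back to the paper's direct triangulated computations.
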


More precisely:
\begin{itemize}
 \item The fibrations are all those morphisms whose cones
belong to the ideal $(\shift\tc^\perp)$.
 \item The weak cofibrations are all morphisms isomorphic to some
$X\stackrel{\left[^0_1\right]}{\gfl} X\oplus\shift\tc$, where
$X\in\cat$ and $T\in\tc$.
 \item All objects are fibrant.
 \item The full subcategory of cofibrant objects is $\ccf$.
 \item Two morphisms are right-homotopic if and only if their difference belongs to the ideal $(\tc^\perp)$.
 \item A factorisation of a morphism $f$ into a weak cofibration followed by a fibration
is given by $X\stackrel{\left[^1_0\right]}{\gfl} X\oplus\shift T
\stackrel{[f\,\al]}{\gfl} Y$, where $\al$ is a (minimal) right $\shift\tc$-approximation
of $Y$. This factorisation is only functorial in $\cat/(\tc^\perp)$.
 \item A factorisation into a homotopy cofibration followed by a trivial fibration is constructed in Lemma~\ref{lemma: second factorisation}.
 \item For any $X\in\cat$, there is a right $\ccf$-approximation $QX\fl X$ (see Lemma~\ref{lemma: key lemma}, due to Buan--Marsh).
This approximation is a trivial fibration, and can thus be viewed as a cofibrant replacement.
\end{itemize}

\begin{cor}\label{corollary: main}
 The inclusion of $\ccf$ into $\cat$ induces an equivalence of categories:
\[\tc\ast\shift\tc / (\shift\tc) \simeq \cat[\wc^{-1}].\]
\end{cor}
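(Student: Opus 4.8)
The plan is to read the statement off from Theorem~\ref{theorem: main} together with Theorem~\ref{theorem: Quillen}, the only point requiring a genuine (but short) argument being that, on the subcategory $\tc\ast\shift\tc$, factoring through an object of $\tc^\perp$ is the same as factoring through an object of $\shift\tc$. So first I would invoke Theorem~\ref{theorem: main}: the triple $(\wc,\Cof,\Fib)$ is a left-weak model category structure on $\cat$ in which every object is fibrant and the cofibrant objects are exactly those of $\ccf$, so that $\cat_{cf}=\tc\ast\shift\tc$. Theorem~\ref{theorem: Quillen}\,(3) then applies verbatim and shows that the inclusion $\cat_{cf}\hookrightarrow\cat$ induces an equivalence of categories
\[
\cat_{cf}/\!\sim\;\stackrel{\simeq}{\gfl}\hoc,
\]
where $\hoc=\cat[\wc^{-1}]$ by definition; in particular this localisation is a well-defined category.

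It remains to identify the source $\cat_{cf}/\!\sim$ with $\tc\ast\shift\tc/(\shift\tc)$, i.e.\ to show that for $A,B\in\tc\ast\shift\tc$ the homotopy relation on $\cat(A,B)$ is the congruence $f\sim g\iff f-g\in(\shift\tc)$. By Corollary~\ref{corollary: homotopies coincide}, left and right homotopy coincide on $\cat(A,B)$ since $A$ is cofibrant and $B$ is fibrant, and by Theorem~\ref{theorem: main} right homotopy is exactly $f\sim g\iff f-g\in(\tc^\perp)$. Thus I am reduced to checking that a morphism $f\colon A\fl B$ with $A\in\tc\ast\shift\tc$ factors through an object of $\tc^\perp$ if and only if it factors through an object of $\shift\tc$ (only the presentation of $A$ as a cone will be used). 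The implication ``$\Leftarrow$'' is immediate: $\tc$ is rigid, so $\shift\tc\subseteq\tc^\perp$ and hence $(\shift\tc)\subseteq(\tc^\perp)$. For ``$\Rightarrow$'', write $f=\beta\alpha$ with $\alpha\colon A\fl N$, $\beta\colon N\fl B$ and $N\in\tc^\perp$, and pick a triangle $T_1\fl T_0\stackrel{\pi}{\fl}A\stackrel{\sigma}{\fl}\shift T_1$ with $T_0,T_1\in\tc$ realising $A$ as the cone of a morphism in $\tc$. Since $T_0\in\tc$ and $N\in\tc^\perp$ we have $\cat(T_0,N)=0$, so $\alpha\pi=0$ and therefore $\alpha=\gamma\sigma$ for some $\gamma\colon\shift T_1\fl N$; then $f=\beta\gamma\sigma$ factors through $\shift T_1\in\shift\tc$. (If the equality of the ideals $(\tc^\perp)$ and $(\shift\tc)$ on $\tc\ast\shift\tc$ has already been recorded in the course of proving Theorem~\ref{theorem: main}, one simply cites it here.)

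Putting these together, $\cat_{cf}/\!\sim\;=\tc\ast\shift\tc/(\shift\tc)$ and the inclusion of $\ccf$ into $\cat$ induces the desired equivalence $\tc\ast\shift\tc/(\shift\tc)\simeq\cat[\wc^{-1}]$; in particular the localisation of $\cat$ at $\wc$ exists. I do not expect any real obstacle: all the substance is carried by Theorem~\ref{theorem: main} and Theorem~\ref{theorem: Quillen}, and the ideal identification of the previous paragraph is an easy consequence of rigidity, made concrete by the cone triangle of $A$.
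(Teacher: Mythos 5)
Your argument is correct and is essentially the paper's own proof: identify $\cat_{cf}$ with $\tc\ast\shift\tc$ via Corollary~\ref{corollary: fibrant} and Proposition~\ref{proposition: cofibrant}, apply Theorem~\ref{theorem: Quillen}, and identify homotopy with the ideal $(\shift\tc)$. The only stylistic difference is that you re-derive the equality of the ideals $(\tc^\perp)$ and $(\shift\tc)$ on morphisms with domain in $\ccf$ directly from the cone triangle of a cofibrant object, whereas the paper cites Lemma~\ref{lemma: htp}, whose final sentence records exactly that fact.
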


\subsection{Proof of Theorem~\ref{theorem: main}}\label{section: proof}

Let $I$ be the class of all morphisms of the form
$T\fl Y$, where $T\in\tc$ and $Y\in \tc\ast\shift\tc$.
Let $J$ be the class of all morphisms of the form
$0 \fl \shift T$, where $T\in\tc$.

\begin{rk}
It is well-known since \cite{BMRRT} that if $\tc$ is contravariantly finite and rigid,
then $\tc\ast(\tc^\perp) = \cat$. Indeed, if $X$ is any object in $\cat$,
the cone of a right $\tc$-approximation of $X$ belongs to $\tc^\perp$ (this also holds more generally
for any contravariantly finite, extension-closed $\tc$ by \cite[Lemma 2.1]{JorgensenWaka}).
We will use this result several times in this section without mentionning it explicitely.
\end{rk}

\begin{lemma}\label{lemma: ideal}\cite[Lemma 2.3]{BMloc1}
 Let $f$ be a morphism in $\cat$. Then the morphism
$\cat(-,f)|_\tc$ is zero if and only if $f$ belongs to the ideal $\ideal$.
\end{lemma}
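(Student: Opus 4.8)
The plan is to prove the two implications separately; the ``if'' direction is purely formal, while the ``only if'' direction will use contravariant finiteness of $\tc$ together with rigidity.

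For the ``if'' direction, suppose $f\colon X\fl Y$ factors as $X\stackrel{h}{\fl}Z\stackrel{g}{\fl}Y$ with $Z\in\tperp$. Given $T\in\tc$ and $a\in\cat(T,X)$, the composite $ha$ lies in $\cat(T,Z)=0$, hence $fa=g(ha)=0$. Thus $\cat(T,f)=0$ for every $T\in\tc$, which is exactly the assertion that the natural transformation $\cat(-,f)|_\tc$ vanishes.

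For the converse, assume $\cat(-,f)|_\tc=0$. Since $\tc$ is contravariantly finite, I would choose a right $\tc$-approximation $t\colon T_0\fl X$ and complete it to a triangle
\[T_0\stackrel{t}{\gfl}X\stackrel{p}{\gfl}Z\stackrel{w}{\gfl}\shift T_0.\]
First I would check that $Z\in\tperp$: applying $\cat(T,-)$ to this triangle for an arbitrary $T\in\tc$, the map $t_\ast\colon\cat(T,T_0)\fl\cat(T,X)$ is surjective because $t$ is a right $\tc$-approximation, and $\cat(T,\shift T_0)=0$ because $\tc$ is rigid (so $\shift T_0\in\tperp$); exactness of the resulting long exact sequence then forces $\cat(T,Z)=0$. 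Next, since $T_0\in\tc$ and $\cat(-,f)|_\tc=0$, we get $ft=0$, so applying $\cat(-,Y)$ to the triangle and using exactness at $\cat(X,Y)$ produces a morphism $g\colon Z\fl Y$ with $f=gp$. As $Z\in\tperp$, this exhibits $f$ as an element of the ideal $\ideal$.

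I do not expect any serious obstacle here: the only step that is not entirely formal is the fact that the cone of a right $\tc$-approximation lies in $\tperp$, which is precisely where rigidity of $\tc$ is used (for merely extension-closed $\tc$ one would instead invoke the triangulated Wakamatsu lemma, as indicated in the remark preceding the statement). Everything else is a short diagram chase with the long exact sequences attached to the triangle.
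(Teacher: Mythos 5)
Your proof is correct and follows essentially the same route as the paper's, which simply cites the fact that every object belongs to $\tc\ast\tperp$ (equivalently, that the cone of a right $\tc$-approximation lies in $\tperp$): you have merely unpacked that one-line hint into the two short diagram chases it encodes, and you correctly locate where contravariant finiteness and rigidity enter.
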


\begin{proof}
The proof follows from the fact that every object of $\cat$, and in particular the domain of $f$,
belongs to $\tc\ast \tc^\perp$.
\end{proof}

\begin{lemma}\label{lemma: wc'}
Let $\rc,\sc$ be contravariantly finite, extension-closed, full subcategories of $\cat$.
Let $\wc'$ be the class of morphisms $w$ in $\cat$ such that, in any triangle
\[Z \stackrel{r}{\gfl} X \stackrel{w}{\gfl} Y \stackrel{s}{\gfl} \shift Z \]
the morphism $r$ belongs to $\rideal$ and $s$ to $\sideal$.
Let $f$, $g$ be two composable morphisms in $\cat$.
\begin{enumerate}
 \item If $f$ and $g$ belong to $\wc'$, then $gf$ belongs to $\wc'$.
 \item Assume that $\rc \subseteq \sc$. If $f$ and $gf$ belong to $\wc'$, then
$g$ belongs to $\wc'$.
 \item Assume that $\sc\subseteq\rc$. If $g$ and $gf$ belong to $\wc'$, then
$f$ belongs to $\wc'$.
\end{enumerate}
\end{lemma}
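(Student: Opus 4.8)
The plan is to prove all three statements by constructing octahedral diagrams relating the three triangles involved and then tracking where the connecting morphisms factor. Throughout, recall that for a subcategory $\dc$, a morphism lies in $(\dc)$ iff it factors through an object of $\dc$, and that $\rc,\sc$ being contravariantly finite and extension-closed means that for every $X$ the cone of a right $\rc$-approximation lies in $\rc^\perp$ (and similarly for $\sc$), by \cite[Lemma 2.1]{JorgensenWaka}; equivalently $\rc\ast\rc^\perp=\cat=\sc\ast\sc^\perp$. The key observation I would isolate first is a characterization of $\wc'$ purely in terms of approximations: a morphism $w\colon X\fl Y$ lies in $\wc'$ iff, after composing a right $\rc$-approximation $r_X\colon R_X\fl X$ on the source and building the cone appropriately, the induced maps on $\rc$- and $\sc$-approximation data are controlled. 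But more economically, I would just note the ideal-theoretic reformulation: in a triangle $Z\xrightarrow{r}X\xrightarrow{w}Y\xrightarrow{s}\shift Z$, $r\in(\rc)$ iff $\cat(R,-)$ applied to $r$ is zero for all $R\in\rc$... — actually the cleaner fact (analogue of Lemma~\ref{lemma: ideal}, which says $\cat(-,f)|_\tc=0\iff f\in(\tc^\perp)$) is dual, so I would prove and use: $r\in(\rc)$ iff $r$ becomes zero after applying $\cat(-,\rc^\perp)$, i.e. $\cat(r,X')=0$ for all $X'\in\rc^\perp$. This is what makes the long-exact-sequence bookkeeping go through.

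\textbf{Proof of (1).} Given $f\colon X\fl Y$ and $g\colon Y\fl V$ in $\wc'$, complete them to triangles $Z_f\xrightarrow{r_f}X\xrightarrow{f}Y\xrightarrow{s_f}\shift Z_f$ and $Z_g\xrightarrow{r_g}Y\xrightarrow{g}V\xrightarrow{s_g}\shift Z_g$, and complete $gf$ to a triangle $Z\xrightarrow{r}X\xrightarrow{gf}V\xrightarrow{s}\shift Z$. By the octahedral axiom there is a triangle $Z_f\fl Z\fl Z_g\xrightarrow{+1}$ fitting into a commutative diagram in which $r=r_f\circ(Z\fl Z_f)$ — wait, more precisely the map $Z\fl Z_f$... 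I would set this up carefully so that $r$ factors as $Z\fl X$ through a composite whose ``inner'' part is $r_f$, and dually $s$ factors through $s_g$. Concretely: in the octahedron on $X\xrightarrow{f}Y\xrightarrow{g}V$, one obtains the triangle $Z_f\xrightarrow{a}Z\xrightarrow{b}Z_g\xrightarrow{(\shift r_f)s_g}\shift Z_f$, with $r a = r_f$ ... no: with $r = $ (the map $Z\xrightarrow{}X$) and there is a factorization $Z_f\xrightarrow{a}Z$ with $r\circ a = r_f$ and $b$ fitting $\shift r_g\circ s \sim s_g\circ(\text{something})$. Since $r_f\in(\rc)$, it factors $Z_f\fl R\fl X$ with $R\in\rc$; since $a$ is an iso after... hmm, $a$ need not be split. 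The correct move: use the long exact sequence $\cat(X',\rc^\perp)$... Let me instead argue: to show $r\in(\rc)$ it suffices (by the dual of Lemma~\ref{lemma: ideal}) to show $\cat(r,C)=0$ for every $C\in\rc^\perp$. Apply $\cat(-,C)$ to the octahedral triangle $Z_f\xrightarrow{a}Z\xrightarrow{b}Z_g\xrightarrow{+1}$ and to the map $r$; since $r_f=r a\in(\rc)$ we get $\cat(a,C)\cdot\cat(r,C)=0$, and since $s_g\in(\sc)$ with $\sc\supseteq$... — here (1) does not assume any containment, so I must handle $Z_g$ directly: the connecting map $Z_g\xrightarrow{} \shift Z_f$ equals $\shift(r_f)\circ s_g$ up to sign, which lies in $\shift(\rc)$ since $r_f$ does, hence $\cat(Z_g,C)\fl\cat(Z_f,C)$-type arguments let me conclude $\cat(a,C)$ is injective on the relevant piece. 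Dually for $s$ using $s_f,s_g\in(\sc)$. I would write this out as two symmetric long-exact-sequence chases.

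\textbf{Proof of (2) and (3).} These are the ``cancellation'' statements and are where the hypotheses $\rc\subseteq\sc$ (resp. $\sc\subseteq\rc$) enter. For (2): given $f,gf\in\wc'$, complete $g\colon Y\fl V$ to a triangle $Z_g\xrightarrow{r_g}Y\xrightarrow{g}V\xrightarrow{s_g}\shift Z_g$, and use the same octahedron as above, now reading off $r_g$ and $s_g$ in terms of the known data $r_f,s_f,r,s$. The octahedral triangle is $Z_f\xrightarrow{a}Z\xrightarrow{b}Z_g\xrightarrow{c}\shift Z_f$. From $r\in(\rc)$ and $r_f\in(\rc)$ I get that $b\colon Z\fl Z_g$ becomes, after suitable approximation, controlled; the point is $r_g$ factors through $Y$ and I want $r_g\in(\rc)$, $s_g\in(\sc)$. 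For $s_g$: the connecting map of the octahedron gives $s_g$ related to $s$ and $b$; since $s\in(\sc)$ and (using $\rc\subseteq\sc$) also $c=\shift(r_f)\circ s_g$-type maps land in $\shift(\sc)$ because $r_f\in(\rc)\subseteq(\sc)$, a long exact sequence in $\cat(-,C)$, $C\in\sc^\perp$, forces $s_g\in(\sc)$. For $r_g\in(\rc)$: I use the long exact sequence and the fact that $r,r_f\in(\rc)$ together with the portion of the octahedron expressing $b$; the containment $\rc\subseteq\sc$ is again what lets a stray map be absorbed. Statement (3) is proved by the dual argument, swapping the roles of source/target connecting maps and using $\sc\subseteq\rc$ instead.

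\textbf{Main obstacle.} The genuine difficulty is the bookkeeping in the octahedron: the connecting morphisms of the three triangles are intertwined, and one of the four maps $r_g$ or $s_g$ (resp. $r_f$ or $s_f$) does \emph{not} follow from an ideal membership of a single known map but only after absorbing an extra term — and it is exactly there that the inclusion hypothesis ($\rc\subseteq\sc$ in (2), $\sc\subseteq\rc$ in (3)) is needed, because it lets $(\rc)\subseteq(\sc)$ (resp. $(\sc)\subseteq(\rc)$) swallow that term. I expect the bulk of the written proof to be two careful long-exact-sequence diagram chases (one for the ``$r$-side'', one for the ``$s$-side''), invoking the dual of Lemma~\ref{lemma: ideal} to translate ``factors through $\rc$'' into ``vanishes against $\rc^\perp$''; the rest is formal. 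Statement (1) is the warm-up with no inclusion hypothesis and hence slightly more delicate in that the middle term $Z_g$ must be handled via $c=\pm\shift(r_f)s_g\in\shift(\rc)$ directly.
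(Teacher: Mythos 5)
You have misread the statement: $\rideal$ abbreviates $(\rc^\perp)$ and $\sideal$ abbreviates $(\sc^\perp)$, so the hypothesis on a triangle $Z\stackrel{r}{\fl}X\stackrel{w}{\fl}Y\stackrel{s}{\fl}\shift Z$ is that $r$ factors through $\rc^\perp$ and $s$ through $\sc^\perp$ --- not that they factor through $\rc$ and $\sc$. Your entire proposal is built on the opposite reading (e.g.\ ``since $r_f\in(\rc)$, it factors $Z_f\fl R\fl X$ with $R\in\rc$'', and the repeated phrase ``lands in $\shift(\rc)$''). Consequently the auxiliary fact you propose to isolate and use --- a ``dual'' of Lemma~\ref{lemma: ideal} asserting $r\in(\rc)$ iff $\cat(r,X')=0$ for all $X'\in\rc^\perp$ --- is simultaneously the wrong tool for this lemma and not something the hypotheses deliver: it would require covariant finiteness of $\rc$, which is not assumed. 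The correct tool is Lemma~\ref{lemma: ideal} applied verbatim with $\rc$ or $\sc$ in place of $\tc$: a morphism $u$ lies in $(\rc^\perp)$ iff $\cat(-,u)|_{\rc}=0$, valid because $\rc$ and $\sc$ are contravariantly finite and extension-closed, so that $\cat=\rc\ast\rc^\perp=\sc\ast\sc^\perp$.

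Once this is corrected, your strategic outline (an octahedron on $X\stackrel{f}{\fl}Y\stackrel{g}{\fl}\cdot$ for each of the three parts, a chase on the two unknown connecting maps, with the inclusion hypothesis absorbing one stray term in parts (2) and (3)) is essentially the paper's. But you never close the chase even in part (1): you flag your own uncertainty about how the octahedral maps fit several times and end by deferring to ``two careful long-exact-sequence chases'' that are left unwritten, so the argument as submitted is a sketch with an incorrect foundation rather than a proof. To repair it: take the octahedron linking the three triangles, and for each unknown connecting map test it against morphisms $R\fl(\cdot)$ with $R\in\rc$ (resp.\ $S\fl(\cdot)$ with $S\in\sc$); the known memberships in $(\rc^\perp)$ and $(\sc^\perp)$ kill those test morphisms after composition, and in parts (2) and (3) the containment $\rc\subseteq\sc$ or $\sc\subseteq\rc$ is exactly what lets a test morphism against one ideal be absorbed by the other.
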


\begin{proof}
Let $f,g$ be composable morphisms in $\cat$.

Let us first assume that $f$ and $g$ belong to $\wc'$.
The ocaedron axiom gives a commutative diagram whose rows and columns are triangles:
\[
\xymatrix{
& & C \bas^{u\in\rideal} & \\
A \dr^{a\in\rideal} \bas & X\baseg \dr^f & Y \bas^g \dr^{b\in\sideal} & \shift A \bas \\
C \dr^\eta & X \dr^{g\circ f} & Z \dr^\eps \bas^{v\in\sideal} & \shift C \\
& & \shift B &
}
\]
We use lemma~\ref{lemma: ideal} in order to prove that $\eta$ belongs to $\rideal$
and that $\eps$ belongs to $\sideal$.
For all $R\in\rc$, $S\in\sc$ and all morphisms $h: R\fl Z$ and $k: S\fl C$, we have:
$vh = 0$ so that there is some $w: R\fl Y$ with $gw = h$. Moreover,
$w$ factors through $f$ since $bw = 0$. This shows that $h$ factors through $g\circ f$,
and thus $\eps h = 0$.
On the other hand, $gf\eta k = 0$ so that $f\eta k$ factors through $u$. This implies
$f\eta k =0$ so that $\eta k$ factors through $a$. We thus have $\eta f =0$.

We next assume that $\rc\subseteq\sc$ and that $f$ and $g\circ f$ are in $\wc'$.
Applying the octeadron axiom gives a commutative diagram of triangles:
\[
\xymatrix{
& & C \bas_u & \\
& X \baseg \dr^f & Y \bas_g \dr^{\sideal} & \shift A \bas \\
B \dr^{\rideal} & X \dr^{g\circ f} & Z \bas_v \dr^{\sideal} & \shift B \bas \\
& & \shift C \dreg & \shift C 
}
\]
The lower-right square being commutative, the morphism $v$ belongs to the ideal $\sideal$.
We use lemma~\ref{lemma: ideal} in order to prove that $u$ belongs to $\rideal$.
For any $R$ in $\rc$, and any morphism $h$ from $R$ to $C$, the composition $uh$ factors
through $f$ since the cone of $f$ belongs to $\sideal$ and $\rc\subseteq\sc$. There is some $w$ from $S$
to $X$ such that $fw = uh$. We then have $gfw = guh = 0$, so that $w$ factors through $\rideal$.
This implies $w=0$ and thus $uh = 0$.

Finally, let us assume that $\sc\subseteq\rc$ and that $g$ and $g\circ f$ belong to $\wc'$.
Let us thus consider the following commutative diagram
of triangles:
\[
\xymatrix{
A \baseg \dr & B \bas^{\rideal} \dr & C \bas^{\rideal} & \\
A \dr^a & X \bas_{g\circ f} \dr^f & Y \dr^b \bas_g & \shift A \\
& Z \dreg \bas_{\sideal} & Z & \\
& \shift B & &
}
\]
Commutativity of the top-left square shows that $a$ belongs to $\rideal$.
Let us apply Lemma~\ref{lemma: ideal} to the morphism $b$:
For any $S$ in $\sc$ and any morphism $h$ from $S$ to $Y$, the composition
$gh$ factors through $g\circ f$. There is some $c$ from $S$ to $X$ such that
$gh = gfc$. We have $g(h-fc) = 0$, which implies $h = fc$ (since $\sc\subseteq\rc$) and thus $bh = 0$.
\end{proof}

\begin{cor}\label{corollary: 2-out-of-3}
The set $\wc$ satisfies the 2-out-of-3 property.
\end{cor}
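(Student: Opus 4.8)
The plan is to deduce this immediately from Lemma~\ref{lemma: wc'} by specialising $\rc=\sc=\tc$. For that choice the class $\wc'$ appearing there — morphisms $w$ such that in every triangle $Z\to X\stackrel{w}{\to}Y\to\shift Z$ one has $r\in(\rc^\perp)$ and $s\in(\sc^\perp)$ — is literally the class $\wc$ of weak equivalences, since $(\rc^\perp)=(\sc^\perp)=(\tc^\perp)$. Moreover the inclusions $\rc\subseteq\sc$ and $\sc\subseteq\rc$ both hold trivially, so parts (1), (2) and (3) of Lemma~\ref{lemma: wc'} are all available. They say, for composable $f,g$: if $f,g\in\wc$ then $gf\in\wc$; if $f,gf\in\wc$ then $g\in\wc$; and if $g,gf\in\wc$ then $f\in\wc$ — which is precisely the two-out-of-three property demanded in Axiom~(1) of Definition~\ref{Definition: weak-model}.

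The one point that needs a word of justification is that we may apply Lemma~\ref{lemma: wc'} with $\rc=\sc=\tc$, since that lemma is stated for \emph{extension-closed} subcategories, whereas $\tc$ is only assumed rigid. In fact a rigid subcategory closed under finite direct sums is automatically extension-closed: given a triangle $T'\to E\to T''\stackrel{w}{\to}\shift T'$ with $T',T''\in\tc$, the connecting morphism $w$ lies in $\cat(T'',\shift T')$, which vanishes by rigidity, so the triangle splits and $E\cong T'\oplus T''\in\tc$. Alternatively — and this avoids even this remark — one can observe that the only property of $\rc$ and $\sc$ used in the proof of Lemma~\ref{lemma: wc'} is the conclusion of Lemma~\ref{lemma: ideal}, and Lemma~\ref{lemma: ideal} is valid for our $\tc$: being contravariantly finite and rigid, $\tc$ satisfies $\cat=\tc\ast\tc^\perp$, so a morphism $f$ lies in $(\tc^\perp)$ if and only if $\cat(-,f)|_\tc=0$. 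With $\rc=\sc=\tc$, the invocations of Lemma~\ref{lemma: ideal} for $\rc$ and for $\sc$ in the proof of Lemma~\ref{lemma: wc'} are exactly Lemma~\ref{lemma: ideal} as stated.

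There is essentially no obstacle here: the statement is pure bookkeeping on top of Lemma~\ref{lemma: wc'}, and the only subtlety is the rigid-versus-extension-closed discrepancy discussed above, which is harmless for the specialisation $\rc=\sc=\tc$.
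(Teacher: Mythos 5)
Your proof is correct and takes essentially the same approach as the paper: the corollary is obtained by specialising Lemma~\ref{lemma: wc'} to $\rc=\sc=\tc$, and the paper gives no further argument. Your observation that a rigid additive subcategory closed under direct summands is automatically extension-closed (since the connecting morphism of any extension of objects of $\tc$ vanishes by rigidity) is a genuine subtlety that the paper leaves silent, and your resolution of it is correct.
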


\begin{rk}\label{rk: 2-out-of-6}
The set $\wc$ satisfies the more general 2-out-of-6 property.
\end{rk}

\begin{proof}
Let $f,g,h$ be composable morphisms in $\cat$. Assume that both $gf$ and
$hg$ belong to $\wc$. Let us prove that $f,g,h$ (and thus $fgh$) belong to $\wc$.
Since $gf$ and $hg$ belong to $\wc$, the octahedron axiom (applied twice) implies that $g$ belongs to $\wc$.
The conclusion thus follows from Corollary~\ref{corollary: 2-out-of-3}.
\end{proof}

\begin{lemma}\label{lemma: retracts}
The set $\wc$ is stable under retracts.
\end{lemma}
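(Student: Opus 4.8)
The plan is to reduce the whole statement to a bijectivity criterion for the functors $\cat(T,-)$, $T\in\tc$, using Lemma~\ref{lemma: ideal}. First I would record the following reformulation of membership in $\wc$: for any triangle $Z\stackrel{u}{\gfl}X\stackrel{f}{\gfl}Y\stackrel{v}{\gfl}\shift Z$, one has $f\in\wc$ if and only if $\cat(T,f)\colon\cat(T,X)\to\cat(T,Y)$ is bijective for every $T\in\tc$. This follows by combining Lemma~\ref{lemma: ideal} with the long exact sequence obtained by applying the cohomological functor $\cat(T,-)$ to the triangle: indeed $u\in(\tc^\perp)$ is equivalent, by Lemma~\ref{lemma: ideal}, to $\cat(T,u)=0$ for all $T\in\tc$, which by exactness of $\cat(T,Z)\to\cat(T,X)\to\cat(T,Y)$ says precisely that $\cat(T,f)$ is injective; and likewise $v\in(\tc^\perp)$ is equivalent to $\cat(T,v)=0$ for all $T\in\tc$, which by exactness of $\cat(T,X)\to\cat(T,Y)\to\cat(T,\shift Z)$ says precisely that $\cat(T,f)$ is surjective.

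Then I would translate the retract hypothesis through $\cat(T,-)$. If $f\colon X\to Y$ is a retract of $g\colon X'\to Y'$ in the category of morphisms of $\cat$ and $g\in\wc$, then applying the additive functor $\cat(T,-)$ to the retract diagram produces a retract diagram of morphisms of abelian groups, so $\cat(T,f)$ is a retract of $\cat(T,g)$ for every $T\in\tc$. By the first step $\cat(T,g)$ is bijective for every $T\in\tc$, and a retract of an isomorphism is an isomorphism in any category, so $\cat(T,f)$ is bijective for every $T\in\tc$; the first step then gives $f\in\wc$.

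There is essentially no obstacle here; the only point needing (routine) care is reading off injectivity and surjectivity of $\cat(T,f)$ from the vanishing of $\cat(T,u)$ and $\cat(T,v)$ in the long exact sequence. One could instead try to realise a triangle on $f$ as a direct summand of a triangle on $g$, but this would invoke the usual subtleties about splitting idempotents in triangulated categories, whereas the functorial argument above avoids them entirely.
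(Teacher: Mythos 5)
Your proof is correct and takes a genuinely different, arguably cleaner, route from the paper's.

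The paper's proof completes both $f$ and the ambient morphism $w$ to triangles, extends the retract diagram to those triangles via (TR3), and concludes that the cone morphism of $f$ is a retract of the cone morphism of $w$, then uses that the ideal $(\tc^\perp)$ is closed under retracts. This works but, as you anticipate, relies on the standard subtlety that the morphism of triangles produced by (TR3) need not give an identity on the cone when you compose the section with the retraction; one gets an automorphism of the cone which must then be absorbed to obtain a genuine retract. The paper's proof is silent on this point.

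Your approach instead reformulates membership in $\wc$ functorially: combining Lemma~\ref{lemma: ideal} with the long exact sequence of the cohomological functor $\cat(T,-)$, one checks that $f\in\wc$ if and only if $\cat(T,f)$ is bijective for every $T\in\tc$ (vanishing of $\cat(T,u)$ in the long exact sequence is exactly injectivity of $\cat(T,f)$, and vanishing of $\cat(T,v)$ is exactly surjectivity). This criterion is manifestly closed under retracts, since an additive functor preserves retract diagrams and a retract of an isomorphism is an isomorphism. The reformulation is in the spirit of Buan--Marsh's original argument that $\cat(T,-)$ inverts the morphisms in $\sc$, and it bypasses the triangulated-category bookkeeping entirely. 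It also has the pleasant side effect of making the two-out-of-three and even two-out-of-six properties of $\wc$ immediate, whereas the paper proves those separately by octahedra in Lemma~\ref{lemma: wc'}.
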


\begin{proof}
Let $f$ be a retract of a morphism $w\in\wc$. Complete $f$ to a triangle
$(f,g,h)$ and $w$ to a triangle $(w,x,y)$. Then $x$ belongs to $\ideal$ and $g$ is a retract of $x$.
The result follows since $\ideal$ is stable under retracts.
\end{proof}

\begin{lemma}\label{lemma: J-inj}
The set $J^\square$ consists of all those morphisms whose cone
factors through $\shift\tperp$.
\end{lemma}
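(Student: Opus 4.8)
The plan is to unwind both sides of the claimed equality through the cohomological functors $\cat(\shift T,-)$ for $T\in\tc$, and to identify them.

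First I would describe $J^\square$ concretely. Since $0$ is a zero object of $\cat$, a commutative square from a generator $j=(0\fl\shift T)\in J$ to a morphism $f\colon X\fl Y$ is nothing but the choice of a morphism $a\colon\shift T\fl Y$ (the two composites around the square are automatically the zero morphism $0\fl Y$), and a lift is nothing but a morphism $b\colon\shift T\fl X$ with $fb=a$. Hence $f\in J^\square$ if and only if the map $\cat(\shift T,f)\colon\cat(\shift T,X)\fl\cat(\shift T,Y)$ is surjective for every $T\in\tc$.

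Next I would fix a triangle $Z\stackrel{u}{\gfl}X\stackrel{f}{\gfl}Y\stackrel{v}{\gfl}\shift Z$ completing $f$, so that $\shift Z$ is the cone of $f$ and $v$ is the cone morphism; such a triangle is unique up to isomorphism and the ideal $(\shift\tperp)$ is invariant under isomorphism, so the choice is immaterial (this accounts for the implicit ``some/any'' in the statement). Applying the homological functor $\cat(\shift T,-)$ to this triangle gives an exact sequence $\cat(\shift T,X)\stackrel{f_\ast}{\gfl}\cat(\shift T,Y)\stackrel{v_\ast}{\gfl}\cat(\shift T,\shift Z)$, so $f_\ast$ is surjective precisely when $v_\ast=0$, i.e.\ when $\cat(\shift T,v)=0$. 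Combined with the previous paragraph, $f\in J^\square$ if and only if $\cat(\shift T,v)=0$ for all $T\in\tc$.

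Finally I would translate the condition ``$\cat(\shift T,v)=0$ for all $T\in\tc$'' into ``$v\in(\shift\tperp)$''. Since $\shift$ is an autoequivalence, this condition is equivalent to $\cat(T,\susm v)=0$ for all $T\in\tc$, that is, $\cat(-,\susm v)|_\tc=0$; by Lemma~\ref{lemma: ideal} this holds if and only if $\susm v\in\ideal$. Because a morphism factors through an object $W$ exactly when its desuspension factors through $\susm W$, we have $\susm v\in\ideal$ iff $v\in(\shift\tperp)$ (here I use that $(\shift\tperp)$ is precisely $\shift$ applied to the ideal $\ideal$). Chaining the three equivalences proves that $J^\square$ is exactly the class of morphisms whose cone (morphism) lies in $(\shift\tperp)$. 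The steps are all routine — the zero-object identification and the long exact sequence bookkeeping are standard — and the only point needing a little care is the last one, namely checking that $(\shift\tperp)$ coincides with $\shift(\ideal)$ so that Lemma~\ref{lemma: ideal} applies after desuspension; I do not anticipate a genuine obstacle, as the lemma is essentially Lemma~\ref{lemma: ideal} read through the suspension functor.
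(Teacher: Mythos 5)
Your argument is correct and follows essentially the same route as the paper's (very terse) proof: both reduce membership of $f$ in $J^\square$ to the vanishing of the composite $\cat(\shift T,-)$ applied to the cone morphism, and then translate that vanishing into factorisation through $\shift\tperp$; the paper invokes the analogue of Lemma~\ref{lemma: ideal} with $\tc$ replaced by $\shift\tc$, while you desuspend and apply Lemma~\ref{lemma: ideal} itself, which is the same idea. Your version is a bit more explicit in unwinding the lifting property and the long exact sequence, but there is no substantive difference.
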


\begin{proof}
The proof is similar to that of lemma~\ref{lemma: ideal}.
Let $f$ be a morphism in $\cat$, whose cone we denote by $g$.
Then $f$ belongs to $J^\square$ if and only if the morphism $\cat(-,g)|_{\shift\tc}$
is zero, if and only if
$g$ belongs to $(\shift\tc^\perp)$.
\end{proof}

\begin{cor}\label{corollary: fibrant}
Every object is fibrant.
\end{cor}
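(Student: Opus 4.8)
The plan is to unwind the definition of ``fibrant'' and reduce at once to Lemma~\ref{lemma: J-inj}. Since $\cat$ is triangulated, hence additive, its terminal object $\ast$ is the zero object $0$, and for every $X$ the canonical morphism $X\to\ast$ is just the (zero) morphism $X\to 0$. So all I need to check is that $X\to 0$ lies in $\Fib=J^\square$.

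To do this I would complete $X\to 0$ to a triangle $X\to 0\to\shift X\to\shift X$, which exhibits the cone of $X\to 0$ as the zero morphism $0\to\shift X$. This morphism factors through the zero object, and $0\in\shift\tc^\perp$ because $\cat(\shift T,0)=0$ for every $T\in\tc$; hence the cone of $X\to 0$ lies in the ideal $(\shift\tc^\perp)$. By Lemma~\ref{lemma: J-inj} this is precisely the criterion for $X\to 0$ to belong to $J^\square=\Fib$, so $X$ is fibrant.

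There is no genuine obstacle here: the corollary is a formal consequence of Lemma~\ref{lemma: J-inj}. Alternatively, one can verify the lifting property against the maps $0\to\shift T$ of $J$ by hand, the zero morphism $\shift T\to X$ serving as a lift, since in the relevant square one triangle commutes because its source is $0$ and the other because its target is $0$. The only point one might spell out is that the zero object lies in $\shift\tc^\perp$, which is immediate.
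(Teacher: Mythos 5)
Your proof is correct and matches the paper's (implicit) argument: Corollary~\ref{corollary: fibrant} is stated as an immediate consequence of Lemma~\ref{lemma: J-inj}, the point being precisely that the cone morphism $0\to\shift X$ of $X\to 0$ is zero and hence factors through $0\in\shift\tc^\perp$. Your alternative direct check of the lifting property against $0\to\shift T$ is also valid and, if anything, even more elementary.
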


\begin{lemma}\label{lemma: I-inj}
We have $I^\square = (J^\square)\cap\wc$.
\end{lemma}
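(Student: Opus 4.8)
The plan is to prove the two inclusions of $I^\square = (J^\square)\cap\wc$ separately, after recording concrete descriptions of the classes involved. Completing a morphism $f\colon X\fl Y$ to a triangle $Z\xrightarrow{u}X\xrightarrow{f}Y\xrightarrow{v}\shift Z$ and applying $\cat(T,-)$ with $T\in\tc$, one sees — using Lemma~\ref{lemma: ideal} for the less obvious implication — that $f\in\wc$ if and only if $\cat(T,f)$ is bijective for every $T\in\tc$. Similarly, by reading off the lifting condition against morphisms with a zero corner, $f$ has the right lifting property against $T\fl 0$ (resp. $0\fl T$, resp. $0\fl\shift T$) exactly when $\cat(T,f)$ is injective (resp. when $\cat(T,f)$ is surjective, resp. when $\cat(\shift T,f)$ is surjective). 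In particular $f\in J^\square$ means precisely that $\cat(\shift T,f)$ is surjective for all $T\in\tc$.

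For the inclusion $I^\square\subseteq (J^\square)\cap\wc$ it suffices to exhibit enough members of $I$. The object $0$ belongs to $\tc$, and for $T\in\tc$ both $T$ and $\shift T$ belong to $\ccf$ (as cones of the morphisms $0\fl T$ and $T\fl 0$ respectively), so the three morphisms $T\fl 0$, $0\fl T$ and $0\fl\shift T$ all lie in $I$. Hence any $f\in I^\square$ makes $\cat(T,f)$ bijective for all $T\in\tc$, so $f\in\wc$, and makes $\cat(\shift T,f)$ surjective for all $T\in\tc$, so $f\in J^\square$.

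The substance is the reverse inclusion. Fix $f\in (J^\square)\cap\wc$ and a lifting problem against some $i\colon T\fl A$ in $I$, so $T\in\tc$ and $A\in\ccf$; let $T_1\xrightarrow{\phi}T_0\xrightarrow{\psi}A\xrightarrow{\rho}\shift T_1$ be a defining triangle for $A$, with $T_0,T_1\in\tc$, and write $a\colon T\fl X$, $b\colon A\fl Y$ for the data of the square, so $fa=bi$. By rigidity, $\cat(T',\shift T_1)=0$ for $T'\in\tc$, so $\psi$ is a right $\tc$-approximation and $i=\psi j$ for some $j\colon T\fl T_0$. Since $\cat(T_0,f)$ is bijective there is $\ell_0\colon T_0\fl X$ with $f\ell_0=b\psi$; then $f(\ell_0 j)=b\psi j=bi=fa$ and $f(\ell_0\phi)=b\psi\phi=0$, so injectivity of $\cat(T,f)$ and of $\cat(T_1,f)$ gives $\ell_0 j=a$ and $\ell_0\phi=0$. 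The latter lets $\ell_0$ factor as $\ell\psi$ for some $\ell\colon A\fl X$, whence $\ell i=\ell\psi j=\ell_0 j=a$. Finally $(f\ell-b)\psi=f\ell_0-b\psi=0$, so $f\ell-b=h\rho$ for some $h\colon\shift T_1\fl Y$; as $f\in J^\square$, choose $g\colon\shift T_1\fl X$ with $fg=h$ and put $\ell'=\ell-g\rho$. Then $f\ell'=f\ell-fg\rho=f\ell-h\rho=b$, and since $\rho i=\rho\psi j=0$ also $\ell' i=a$, so $\ell'$ solves the lifting problem.

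The one genuinely delicate point is this last step: one must produce a single morphism $A\fl X$ satisfying both triangle identities of the lifting square at once, and the correction term $g\rho$ — available precisely because $f\in J^\square$ — repairs commutativity with $b$ while leaving commutativity with $a$ intact, since $\rho i=0$. The remainder is a routine chase through the long exact sequences attached to the triangle for $A$, together with rigidity of $\tc$.
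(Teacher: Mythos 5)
Your proof is correct. Both inclusions are established by valid arguments, and your preliminary dictionary (``$f\in\wc$ iff $\cat(T,f)$ is bijective for all $T\in\tc$''; ``$f\in J^\square$ iff $\cat(\shift T,f)$ is surjective for all $T\in\tc$''; the lifting properties against $T\fl 0$, $0\fl T$, $0\fl\shift T$ amount to injectivity, surjectivity, surjectivity of the corresponding Hom maps) is accurate and well justified by Lemma~\ref{lemma: ideal} together with the long exact $\Hom$-sequences.

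Your route, however, differs from the paper's in both directions, and is arguably cleaner in the easy one. For $I^\square\subseteq(J^\square)\cap\wc$, the paper only invokes the lifting properties against $0\fl T$ and $0\fl\shift T$, which yields the cone of $f$ lying in $(\tc^\perp)\cap(\shift\tc^\perp)$, and then runs a separate approximation argument (using two right $\tc$-approximations and a morphism of triangles) to show that the \emph{fiber} of $f$ also lies in $(\tc^\perp)$. You instead observe that $T\fl 0$ is also in $I$ (since $0\in\tc$ and $0\in\tc\ast\shift\tc$), so that $\cat(T,f)$ is injective for free, which gives $u\in(\tc^\perp)$ directly by Lemma~\ref{lemma: ideal}; this dispenses with the paper's extra diagram. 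For $(J^\square)\cap\wc\subseteq I^\square$, the paper's chase first proves $vb=0$ (where $v$ is the connecting map of the triangle on $f$ and $b$ the bottom horizontal of the lifting square) using the defining triangle of the cofibrant target of $i$, then lifts $b$ through $f$ and corrects against $a$ using $u\in(\tc^\perp)$. Your chase instead factors $i$ through the right $\tc$-approximation $\psi\colon T_0\fl A$ (available by rigidity), constructs the candidate lift $\ell$ from a lift $\ell_0$ of $b\psi$, and only then repairs the failure of $f\ell=b$ by a correction $g\rho$ supplied exactly by $f\in J^\square$. The two proofs use the hypotheses in a different order and emphasise different parts of the structure, but both are sound; your version has the merit of making explicit where each of the two hypotheses $\wc$ and $J^\square$ is consumed.
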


\begin{proof}
Let $f:A\fl B$ be a morphism in $(J^\square)\cap\wc$, and let
$g:T \fl Y$ be a morphism in $\cat$ with $T\in\tc$ and
$Y\in\ccf$. Complete $f$ to a triangle
$\xymatrix@-.6pc{C \dr^u & A \dr^f & B \dr^v & \shift C}$.
We then have $u\in\ideal$ and $v\in\ideal\cap(\shift\tc^\perp)$
by definition of $\wc$ and by Lemma~\ref{lemma: J-inj}.
Let 
\[\xymatrix{
T \bas_g \dr^a & A \bas^f \\
Y \dr^b & B 
}\]
be a commutative square.
Claim: the composition $vb$ vanishes. Indeed,
by assumption, there is a triangle $T_1\fl T_0\fl Y\fl \shift T_1$ with $T_0,T_1\in\tc$.
In the following diagram:
\[\xymatrix{
T_0 \bas_\al & \\
Y \dr^b \bas_\beta & B \bas^v \\
\shift T_1 \drp_\gamma \ar@/_.6pc/@{-->}[ur]^\delta & \shift C \bas^{-\shift u} \\
& \shift A,
}\]
the composition $vb\al$ vanishes since $v$ belongs to the ideal $(\tc^\perp)$.
Thus, there is a morphism $\gamma$ such that $\gamma\beta = vb$. Since $\shift u$ belongs to
$(\shift\tc^\perp)$, we have $(\shift u)\gamma = 0$ so that there is a morphism $\delta$
with $v\delta = \gamma$. Since $v\in (\shift\tc^\perp)$, we have $v\delta = 0$, so that $\gamma$
vanishes, and so does $vb$.

It follows from the claim above that, in the following diagram
\[\xymatrix{
& C \bas^u \\
T \bas_g \dr^a \hdrp^d & A \bas^f \\
Y \dr^b \hdrp^c & B \bas^v\\
& \shift C,
}\]
there is some $c:Y\fl A$ such that $b = fc$. We have
$f(cg - a) = 0$ so that there is some $d:T\fl C$ with
$a = cg + ud$. Since $u$ belongs to $\ideal$, we have $a = cg$.
This shows that $f$ belongs to $I^\square$.

Conversely, let $f:A\fl B$ be a morphism in $I^\square$.
For any $T\in\tc$, the morphism $f$ has the right lifting property with respect to
$0\fl T$ and to $0\fl \shift T$. This shows, by lemma~\ref{lemma: ideal}, that the cone
of $f$ belongs to $\ideal$ and to $(\shift\tc^\perp)$.
Let $\xymatrix@-.6pc{C \dr^u & A \dr^f & B \dr & \shift C}$ be a triangle in $\cat$.
Let $\al : T\fl C$ and $\beta: U \fl A$ be right $\tc$-approximations.
There is a commutative diagram whose columns are triangles:
\[\xymatrix{
T \dr^\al \bas_v & C \bas^u \\
U \bas_g \dr^\beta & A \bas^f \\
Y \dr \hdrp^\gamma & B
}\]
Since $f$ is in $J^\square$, it has the right lifting property with respect to
$g$, and there is some $\gamma: Y\fl A$ such that $\beta = \gamma g$.
It follows that $\beta v$ vanishes and thus that $u$ factors through the cone
of $\al$, which belongs to $\tc^\perp$.
\end{proof}

\begin{prop}\label{proposition: cofibrant}
Let $A$ be an object of $\cat$. Then $A$ is cofibrant if and only if
$A$ belongs to the subcategory $\ccf$.
\end{prop}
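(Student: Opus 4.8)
Here is how I would argue. The two implications are proved separately.

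\medskip\noindent\emph{Suppose first that $A\in\ccf$.} Since $\tc$ is stable under direct summands it contains the zero object, so the canonical morphism $\emptyset=0\fl A$ is of the form $T\fl Y$ with $T=0\in\tc$ and $Y=A\in\tc\ast\shift\tc$; that is, it belongs to the class $I$. As $\Cof=\prescript{\square}{}{(I^\square)}$ contains $I$, the morphism $0\fl A$ is a cofibration, so $A$ is cofibrant.

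\medskip\noindent\emph{Suppose now that $A$ is cofibrant.} First I would exhibit a trivial fibration onto $A$ whose source lies in $\ccf$. By Lemma~\ref{lemma: key lemma} (Buan--Marsh, in the form valid for a contravariantly finite rigid subcategory; concretely, complete a right $\tc$-approximation of $A$ to a triangle, take a right $\tc$-approximation of the shifted cone, and apply the octahedral axiom) there is a triangle
\[N_1\stackrel{\mu}{\fl}D\stackrel{\nu}{\fl}A\stackrel{\lambda}{\fl}\shift N_1\]
with $D\in\ccf$, $N_1\in\tperp$ and $\lambda\in\ideal$. Then $\nu$ is a trivial fibration: its cone is $\shift N_1\in\shift\tperp$, so $\nu\in J^\square=\Fib$ by Lemma~\ref{lemma: J-inj}; and $\mu$ (which factors through $N_1\in\tperp$) and $\lambda$ both lie in $\ideal$, so $\nu\in\wc$; hence $\nu\in I^\square$ by Lemma~\ref{lemma: I-inj}.

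Since $A$ is cofibrant, the lifting problem with left edge $0\fl A$, right edge $\nu$, and bottom edge $1_A$ is solvable, giving $s\colon A\fl D$ with $\nu s=1_A$. As $\nu$ is then a split epimorphism, the triangle above splits, and I identify $D$ with $A\oplus N_1$ so that $\nu$ becomes the projection and $\mu$ the inclusion. Choosing, because $D\in\ccf$, a triangle $T_1\stackrel{a}{\fl}T_0\stackrel{b}{\fl}D\stackrel{c}{\fl}\shift T_1$ with $T_0,T_1\in\tc$, I observe that the $N_1$-component of $b$ vanishes, since $\cat(T_0,N_1)=0$ ($T_0\in\tc$, $N_1\in\tperp$); hence $b$ factors as $T_0\stackrel{g}{\fl}A\hookrightarrow A\oplus N_1=D$, where $g$ is the $A$-component of $b$. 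I then complete $g$ to a triangle $T_0\stackrel{g}{\fl}A\fl N\fl\shift T_0$ and apply the octahedral axiom to the composable pair $T_0\stackrel{g}{\fl}A\hookrightarrow D$, whose composite is $b$: using that the cone of $b$ is $\shift T_1$ and the cone of $A\hookrightarrow A\oplus N_1$ is $N_1$ (with cofibre map the projection $\pi_{N_1}$), this yields a distinguished triangle $N\fl\shift T_1\stackrel{\psi}{\fl}N_1\fl\shift N$ together with the octahedral compatibility $\psi c=\pi_{N_1}$. As $\pi_{N_1}$ is a split epimorphism, so is $\psi$, whence this triangle splits and $\shift T_1\cong N\oplus N_1$. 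Therefore $N$ is a direct summand of $\shift T_1\in\shift\tc$, and since $\tc$, hence $\shift\tc$, is closed under direct summands, $N\in\shift\tc$. Rotating $T_0\stackrel{g}{\fl}A\fl N\fl\shift T_0$ produces a triangle $\susm N\fl T_0\fl A\fl N$ with $\susm N,T_0\in\tc$, so $A\in\tc\ast\shift\tc=\ccf$.

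\medskip The genuine difficulty is the second implication: cofibrancy yields only that $A$ is a retract of an object $D\in\ccf$, and $\ccf$ is not visibly closed under direct summands. The device above circumvents this --- splitting the triangle on $\nu$ via the section $s$ upgrades the retract to an honest summand $D\cong A\oplus N_1$, and a \emph{second} use of the octahedral axiom, on the factorisation of $b$ through $A$, identifies the relevant cone $N$ as a summand of $\shift T_1$, and $\shift\tc$ \emph{is} closed under summands. The only delicate points I would have to pin down carefully are the precise form of the octahedral compatibility relation ($\psi c=\pi_{N_1}$) and the standard fact that a distinguished triangle whose connecting morphism vanishes splits.
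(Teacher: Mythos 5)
Your proof is correct, but it follows a genuinely different route from the paper's, especially for the reverse implication, so let me compare.

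For the easy direction ($A\in\ccf\Rightarrow A$ cofibrant), your observation that $0\to A$ is \emph{literally} a member of $I$ (taking $T=0$) and hence automatically belongs to $\prescript{\square}{}{(I^\square)}=\Cof$ is cleaner than what the paper does, which is to point to the lifting construction inside the proof of Lemma~\ref{lemma: I-inj}.

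For the hard direction, the paper does not pass through the cofibrant replacement of $A$ at all. It instead picks a triangle $T\to A\stackrel{\al}{\to}Y\stackrel{\beta}{\to}\shift T$ with $T\in\tc$, $Y\in\tperp$, and then a triangle $Z\to\shift U\stackrel{a}{\to}Y\stackrel{b}{\to}\shift Z$ with $U\in\tc$, $Z\in\tperp$ (a right $\shift\tc$-approximation of $Y$, completed to a triangle). Since $Y\in\tperp$ the map $a$ is a trivial fibration, so cofibrancy of $A$ lifts $\al$ to some $c\colon A\to\shift U$ with $ac=\al$; then $b\al=bac=0$ forces $b$ to factor as $d\beta$ with $d\in\cat(\shift T,\shift Z)\cong\cat(T,Z)=0$, so $b=0$ and $Y$ is a direct summand of $\shift U\in\shift\tc$, whence $A\in\tc\ast\shift\tc$. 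That argument needs only a right $\shift\tc$-approximation and a single lifting, no octahedron and no appeal to Lemma~\ref{lemma: key lemma}.

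Your route instead invokes the Buan--Marsh cofibrant replacement $D\to A$ (Lemma~\ref{lemma: key lemma}), upgrades the retract to a direct summand $D\cong A\oplus N_1$ via the section provided by cofibrancy, and then shows directly --- through a second octahedron and a split-triangle argument --- that $\ccf$ is closed under direct summands. This is heavier machinery (it imports Lemma~\ref{lemma: key lemma}, which the paper only needs for Lemma~\ref{lemma: second factorisation}, plus two applications of the octahedral axiom), but it isolates as a stand-alone fact that $\tc\ast\shift\tc$ is closed under direct summands, which is a useful piece of information that the paper's streamlined proof leaves implicit. Both the octahedral compatibility $\psi c=\pi_{N_1}$ and the splitting of a triangle whose third map is a split epimorphism are standard and you invoke them correctly. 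The one small prerequisite you should make explicit is that $0\in\tc$, which holds because $\tc$ is additive and closed under direct summands.
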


\begin{proof}
Let $A$ be in $\ccf$. Then the same proof as in Lemma~\ref{lemma: I-inj} shows that $A$ is cofibrant.
Conversely, assume that $A$ is a cofibrant object.
Let $T\fl A \fl Y \fl \shift T$ be a triangle in $\cat$ with
$T$ in $\tc$ and $Y$ in $\tperp$, and let
$Z \fl \shift U \fl Y \fl \shift Z$ be a triangle with $U\in\tc$ and $Z\in\tperp$.
In the following diagram:
\[\xymatrix{
& \shift U \bas^a & \\
A \dr^\al \hdrp^c & Y \bas_b \dr^\beta & \shift T \bgp^d \\
& \shift Z &
}\]
the morphism $a$ belongs to $I^\square$ by lemma~\ref{lemma: I-inj}. Thus
there is a morphism $c:A\fl\shift U$ such that $\al = ac$.
This implies that $b\al = 0$ and there is a morphism $d:\shift T \fl \shift Z$
such that $b = d\beta$. Since $Z\in\tperp$, we have $d=0$ so that
$b =0$ and $Y$ is a summand of $\shift U$. Therefore $A$ lies in $\ccf$.
\end{proof}

\begin{lemma}\label{lemma: J-cof}
A morphism is in $\prescript{\square}{}{(J^\square)}$ if and only if it is isomorphic to
$X\stackrel{\left[^1_0\right]}{\gfl} X\oplus \shift T$, for some
$X\in\cat$ and some $T\in\tc$.
\end{lemma}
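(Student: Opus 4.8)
The statement describes $\prescript{\square}{}{(J^\square)}$ as the class of morphisms isomorphic, in the category of morphisms of $\cat$, to a split inclusion $X\stackrel{\left[^1_0\right]}{\gfl} X\oplus\shift T$ with $T\in\tc$; I would prove the two inclusions separately, using throughout that $p\in J^\square$ means exactly that every morphism $\shift S\fl Y$, with $S\in\tc$ and $Y$ the codomain of $p$, factors through $p$. For the easy inclusion, fix $\iota:=\left[^1_0\right]\colon X\fl X\oplus\shift T$ with $T\in\tc$ and a morphism $p\colon E\fl D$ in $J^\square$. In a commutative square with $\iota$ on the left, $p$ on the right, top edge $a\colon X\fl E$ and bottom edge $[b_1\;b_2]\colon X\oplus\shift T\fl D$, commutativity forces $b_1=pa$, while $p\in J^\square$ yields $\tilde b_2\colon\shift T\fl E$ with $p\tilde b_2=b_2$; then $[a\;\tilde b_2]$ is a lift. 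Hence $\iota\sq p$ for all $p\in J^\square$, and since $\prescript{\square}{}{(J^\square)}$ is closed under isomorphism of morphisms, every morphism isomorphic to such an $\iota$ lies in it.

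For the converse, let $f\colon A\fl B$ lie in $\prescript{\square}{}{(J^\square)}$. Since $\tc$ is contravariantly finite and $\shift$ is an autoequivalence, $B$ admits a right $\shift\tc$-approximation $\al\colon\shift T\fl B$ with $T\in\tc$; set $g:=[f\;\al]\colon A\oplus\shift T\fl B$, so $g\iota=f$. The crucial point is that $g\in J^\square$: every morphism $\shift S\fl B$ with $S\in\tc$ factors through $\al$, hence through $g$. As $f\sq g$, the square with vertical edges $f$ and $g$, top edge $\iota$ and bottom edge $1_B$ admits a lift $r\colon B\fl A\oplus\shift T$, so $rf=\iota$ and $gr=1_B$. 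Composing $r$ with the first projection $A\oplus\shift T\fl A$ shows that $f$ is a split monomorphism, while the pair $(r,g)$ exhibits $f$ as a retract of $\iota$ in the category of morphisms of $\cat$.

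Finally, complete $f$ to a triangle $A\stackrel{f}{\gfl}B\stackrel{h}{\gfl}C\stackrel{w}{\gfl}\shift A$. Since $f$ is a split monomorphism, $w=0$, the triangle splits, and $f$ is isomorphic in the category of morphisms of $\cat$ to the canonical inclusion $A\fl A\oplus C$; it thus remains to show that $C\in\shift\tc$. Using axiom (TR3), I would extend the two squares witnessing the retraction to morphisms between the triangle of $f$ and the split triangle $A\stackrel{\iota}{\gfl}A\oplus\shift T\stackrel{\pi}{\gfl}\shift T\stackrel{0}{\gfl}\shift A$, obtaining $c\colon C\fl\shift T$ and $c'\colon\shift T\fl C$ with $ch=\pi r$ and $c'\pi=hg$. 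Then $c'ch=c'\pi r=hgr=h$, so $(c'c-1_C)h=0$; applying $\cat(-,C)$ to the triangle of $f$ and using $w=0$ forces $c'c=1_C$, so $C$ is a direct summand of $\shift T$. Since $\tc$ is closed under direct summands, $S:=\susm C\in\tc$, whence $C=\shift S$ and $f$ is isomorphic to $A\stackrel{\left[^1_0\right]}{\gfl}A\oplus\shift S$. I expect this last step — manufacturing an honest isomorphism $C\cong\shift S$ out of the bare retraction data — to be the main obstacle; it is precisely what forces one to pass through triangles and to combine the freedom in the third map of (TR3) with the vanishing of $w$.
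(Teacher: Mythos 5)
Your proof is correct, and it takes a genuinely different route from the paper's. The paper gets the split-monomorphism property cheaply, by observing that $X \to 0$ always lies in $J^\square$ and lifting the identity through it; it then reduces the whole question to whether $0 \to Y'$ lies in $\prescript{\square}{}{(J^\square)}$, and discharges that by invoking the Buan--Marsh identity $\prescript{\perp}{}{(\shift\tc^\perp)} = \shift\tc$ as a black box. You instead build a richer test morphism $g = [f\;\al] \in J^\square$ out of a right $\shift\tc$-approximation $\al$ of the codomain, which gives you not just a retraction $\pi_1 r$ of $f$ but the full retract structure of $f$ onto $\left[^1_0\right]\colon A \to A\oplus\shift T$ in the arrow category. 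Feeding that through TR3 and the long exact sequence (where $w=0$ makes $h^\ast$ injective) exhibits the cone $C$ directly as a retract of $\shift T$, so closure of $\tc$ under direct summands finishes it. In effect you re-prove the content of the Buan--Marsh lemma in-line, which makes your argument more self-contained but longer; the paper's reduction is shorter because it defers that piece to a reference. One small remark: the final step you flag as the ``main obstacle'' is actually unproblematic once you have $c'c = 1_C$ — that is a retraction, and the standing convention that $\tc$ is ``stable under taking direct summands'' is meant precisely to cover retracts, so $\susm C \in \tc$ is immediate; there is no further isomorphism to be manufactured.
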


\begin{proof}
Let $T\in\tc$. Since $0\fl\shift T$ is in $J$, it is in $\prescript{\square}{}{(J^\square)}$,
and so is any morphism isomorphic to some $X\fl X\oplus\shift T$.
Conversely, let $f:X\fl Y$ be a morphism in $\prescript{\square}{}{(J^\square)}$. By lemma~\ref{lemma: J-inj},
the morphism $X\fl 0$ is in $J^\square$, so that $f$ has the left lifting property
with respect to $X\fl 0$. This implies that $f$ is part of a split triangle and that
$f$ is isomorphic to $\xymatrix{X\dr^{\left[^0_1\right]} & X\oplus Y'}$,
for some $Y'\in\cat$.
Moreover, the morphism $f$ is in $\prescript{\square}{}{(J^\square)}$ if and only if
so is the morphism $0 \fl Y'$, if and only if $Y'$ belongs to $\prescript{\perp}{}{\shift\tc^\perp}$.
By \cite[Lemma 2.2]{BMloc1}, this last subcategory coincide with $\shift\tc$. 
\end{proof}

\begin{cor}\label{corollary: first factorisation}
Every morphism in $\cat$ factors as a morphism in $\prescript{\square}{}{(J^\square)}$ followed by a morphism in $J^\square$.
\end{cor}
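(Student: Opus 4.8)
The plan is to use the explicit factorisation announced in the list following the statement of Theorem~\ref{theorem: main}, and to verify its two halves against Lemma~\ref{lemma: J-cof} and the definition of $J^\square$. This then settles, in particular, condition (5) of Proposition~\ref{proposition: cof gene} in the present situation, since there $\Fib=J^\square$ and $w\,\Cof=\prescript{\square}{}{\Fib}=\prescript{\square}{}{(J^\square)}$.

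Let $X\stackrel{f}{\gfl}Y$ be a morphism in $\cat$. Since $\tc$ is contravariantly finite and $\shift$ is an autoequivalence of $\cat$, the subcategory $\shift\tc$ is contravariantly finite as well, so we may pick a right $\shift\tc$-approximation $\shift T\stackrel{\al}{\gfl}Y$ with $T\in\tc$. Form the morphism
\[
X\stackrel{\left[^1_0\right]}{\gfl} X\oplus\shift T \stackrel{[f\,\al]}{\gfl} Y ,
\]
whose composition is $f$. By Lemma~\ref{lemma: J-cof}, the first morphism $\left[^1_0\right]$ belongs to $\prescript{\square}{}{(J^\square)}$, so it remains to see that $[f\,\al]$ belongs to $J^\square$. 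A morphism $p\colon E\fl Y$ lies in $J^\square$ precisely when, for every $T'\in\tc$, every morphism $\shift T'\fl Y$ factors through $p$: indeed each defining square of $J^\square$ has the initial object $0$ in its upper-left corner, so it imposes no constraint beyond the existence of such a factorisation. Now let $h\colon\shift T'\fl Y$ be arbitrary with $T'\in\tc$. Since $\al$ is a right $\shift\tc$-approximation and $\shift T'\in\shift\tc$, there is $k\colon\shift T'\fl\shift T$ with $h=\al k$; hence $h=[f\,\al]\circ\left[^0_k\right]$ factors through $[f\,\al]$. Thus $[f\,\al]\in J^\square$, and $f=[f\,\al]\circ\left[^1_0\right]$ is the required factorisation.

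There is essentially no obstacle: the content has been isolated beforehand, in Lemma~\ref{lemma: J-inj} and Lemma~\ref{lemma: J-cof}. One could equally run the verification that $[f\,\al]\in J^\square$ through Lemma~\ref{lemma: J-inj}: the cone of $[f\,\al]$ admits no nonzero morphism from $\shift\tc$, because $\al$ being an approximation forces $\cat(\shift\tc,-)$ to vanish on it, so the cone lies in $(\shift\tc^\perp)$ and $[f\,\al]$ is a fibration. The only small point worth stating explicitly is the existence of $\al$, which relies on $\shift$ preserving contravariant finiteness; the parenthetical \emph{minimality} of $\al$ in the statement of Theorem~\ref{theorem: main} is a refinement obtained by the usual Krull--Schmidt argument and is not needed for the bare factorisation asserted here.
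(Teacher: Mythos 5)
Your proof is correct and follows the paper's own argument: both use the factorisation $X\stackrel{\left[^1_0\right]}{\gfl}X\oplus\shift T\stackrel{[f\,\al]}{\gfl}Y$ with $\al$ a right $\shift\tc$-approximation, handle the first half via Lemma~\ref{lemma: J-cof}, and verify that $[f\,\al]\in J^\square$ by observing that $\shift\tc$-morphisms to $Y$ factor through it. Your direct verification of the second half is really just the content of Lemma~\ref{lemma: J-inj} (which the paper invokes and you also note as an alternative), so the two proofs are essentially identical.
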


\begin{proof}
Any morphism $X\stackrel{f}{ \fl}Y$ in $\cat$ factors as
$X\stackrel{\left[^1_0\right]}{\gfl} X\oplus\shift T
\stackrel{[f\,\al]}{\gfl} Y$, where $\al$ is a right $\shift\tc$-approximation
of $Y$. By Lemma~\ref{lemma: J-cof}, the first morphism is in $\prescript{\square}{}{(J^\square)}$.
Since $[f\,\al]$ is a right $\shift\tc$-approximation, its cone factors through $\shift\tc^\perp$.
Lemma~\ref{lemma: J-inj} shows that $[f\,\al]$ is in $J^\square$.
\end{proof}

\begin{cor}\label{cor: I-cof cap W}
We have $\prescript{\square}{}{(J^\square)} \subseteq \wc\cap\prescript{\square}{}{(I^\square)}$.
\end{cor}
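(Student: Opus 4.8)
The plan is to read both halves of the inclusion off the identifications of orthogonality classes already established, without any new input from the triangulated structure; the whole statement is bookkeeping on top of Lemmas~\ref{lemma: I-inj}, \ref{lemma: J-cof} and~\ref{lemma: retracts}. For the containment $\prescript{\square}{}{(J^\square)}\subseteq\prescript{\square}{}{(I^\square)}$, note that Lemma~\ref{lemma: I-inj} gives $I^\square=(J^\square)\cap\wc\subseteq J^\square$, and passing to left weak-orthogonal complements reverses inclusions, so $\prescript{\square}{}{(J^\square)}\subseteq\prescript{\square}{}{(I^\square)}$. (Alternatively, one may note directly that $J\subseteq I$: the morphism $0\fl\shift T$ has the form $T'\fl Y$ with $T'=0\in\tc$ and $Y=\shift T\in\ccf$, the object $\shift T$ being the cone of $T\fl 0$.)

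It remains to prove $\prescript{\square}{}{(J^\square)}\subseteq\wc$. By Lemma~\ref{lemma: J-cof}, any $f\in\prescript{\square}{}{(J^\square)}$ is isomorphic, in the category of morphisms of $\cat$, to a canonical split inclusion $\iota\colon X\stackrel{\left[^1_0\right]}{\gfl}X\oplus\shift T$ with $X\in\cat$ and $T\in\tc$. Such an isomorphism exhibits $f$ as a retract of $\iota$, and $\wc$ is retract-stable by Lemma~\ref{lemma: retracts}, so it suffices to check that $\iota\in\wc$. Completing $\iota$ to a triangle gives the split triangle
\[T\stackrel{0}{\gfl}X\stackrel{\iota}{\gfl}X\oplus\shift T\stackrel{[0\;1]}{\gfl}\shift T,\]
and whether $\iota$ lies in $\wc$ is a condition on this (equivalently, on any) triangle over it. Its two outer morphisms are the zero morphism $T\fl X$, which lies in $\ideal$ since every ideal contains the zero morphisms, and the projection $X\oplus\shift T\fl\shift T$, which factors through $\shift T$; the latter object lies in $\tc^\perp$ precisely because $\tc$ is rigid, so the projection lies in $\ideal$ as well. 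Hence $\iota\in\wc$, and together with the first paragraph this yields $f\in\wc\cap\prescript{\square}{}{(I^\square)}$.

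I do not anticipate a genuine obstacle: the argument is routine once the earlier lemmas are in hand. The two points worth a moment's care are to take the split triangle on $\iota$ rather than an arbitrary one, and to recall that rigidity of $\tc$ is exactly the hypothesis placing $\shift\tc$ inside $\tc^\perp$, which is what allows $\ideal$ to absorb the projection onto the $\shift T$-summand.
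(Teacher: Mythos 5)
Your proof is correct and follows essentially the same route as the paper: invoke Lemma~\ref{lemma: J-cof} to reduce to split inclusions $X\fl X\oplus\shift T$, observe these are weak equivalences because both outer maps of the split triangle lie in $\ideal$ (the zero map trivially, the projection because rigidity puts $\shift T$ in $\tc^\perp$), and obtain the containment in $\prescript{\square}{}{(I^\square)}$ from $I^\square\subseteq J^\square$. The paper's proof states this in one sentence; you have merely spelled out the verification, including the (slightly indirect but valid) retract step for passing from $\iota$ to an isomorphic morphism, where one could equally just note that $\wc$ is visibly invariant under isomorphism of morphisms since it is defined via a triangle.
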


\begin{proof}
Indeed, any morphism in $\prescript{\square}{}{(J^\square)}$ is isomorphic
to some $X\fl X\oplus\shift T$, $T\in\tc$,
and is thus a weak equivalence.
\end{proof}

\begin{lemma}\label{lemma: htp}
Let $X,Y\in\cat$ and let $f$ and $g$ be two morphisms in $\cat(X,Y)$. The following are equivalent:
\begin{itemize}
 \item[(i)] $f\sim^l g$;
 \item[(ii)] $f\sim^r g$;
 \item[(iii)] $f-g\in(\tc^\perp)$.
\end{itemize}
In particular, if $X$ belongs to $\ccf$, then $f\sim g$ is equivalent to $f-g\in(\shift\tc)$.
\end{lemma}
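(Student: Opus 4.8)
The plan is to prove the equivalence of (i), (ii) and (iii) directly, establishing the four implications $(iii)\Rightarrow(i)$, $(i)\Rightarrow(iii)$, $(iii)\Rightarrow(ii)$ and $(ii)\Rightarrow(iii)$. We cannot simply invoke Corollary~\ref{corollary: homotopies coincide}, since the source $X$ is not assumed cofibrant; but the arguments are short and mutually dual. The two implications out of (iii) are explicit constructions of a path object and a cylinder object, and the two implications into (iii) extract the factorisation of $f-g$ from an arbitrary homotopy, using only that in a cylinder object $X\oplus X\fl X'\fl X$ the weak equivalence $X'\fl X$ lies in $\wc$, and dually that in a path object $Y\fl Y'\fl Y\times Y$ the weak equivalence $Y\fl Y'$ lies in $\wc$.

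For $(iii)\Rightarrow(i)$ and $(iii)\Rightarrow(ii)$, write $f-g=\phi\psi$ with $\psi\colon X\fl W$, $\phi\colon W\fl Y$ and $W\in\tperp$. The split monomorphism $\left[^1_0\right]\colon Y\fl Y\oplus W$ lies in $\wc$: in the associated split triangle $Y\fl Y\oplus W\fl W\fl\shift Y$ the first connecting morphism is zero and the second factors through $W\in\tperp$. Composing it with $([1\,\phi],[1\,0])\colon Y\oplus W\fl Y\times Y$ gives a path object for $Y$ (the composite is the diagonal $\Delta$), and $\left[^g_\psi\right]\colon X\fl Y\oplus W$ is a right homotopy from $f$ to $g$, since $[1\,\phi]\left[^g_\psi\right]=g+\phi\psi=f$ and $[1\,0]\left[^g_\psi\right]=g$. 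Dually, the morphism $X\oplus X\fl X\oplus W$ with matrix $\left[\begin{smallmatrix}1&1\\ \psi&0\end{smallmatrix}\right]$, followed by $[1\,0]\colon X\oplus W\fl X$, is a cylinder object for $X$ (the second morphism lies in $\wc$ because $W\in\tperp$, and the composite is $\nabla$), and $[g\,\phi]\colon X\oplus W\fl Y$ is a left homotopy from $f$ to $g$.

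For $(i)\Rightarrow(iii)$, let $H\colon X'\fl Y$ be a left homotopy from $f$ to $g$ along a cylinder object with structure maps $[i_0\,i_1]\colon X\oplus X\fl X'$ and $t\colon X'\fl X$, $t\in\wc$. Then $f-g=H(i_0-i_1)$ while $t(i_0-i_1)=1_X-1_X=0$. Completing $t$ to a triangle $C\stackrel{\iota}{\fl}X'\stackrel{t}{\fl}X\fl\shift C$, the long exact sequence obtained by applying $\cat(X,-)$ shows that $i_0-i_1$ factors through $\iota$, and the defining property of $\wc$ applied to this very triangle gives $\iota\in(\tperp)$; hence $f-g\in(\tperp)$. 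For $(ii)\Rightarrow(iii)$, dually, if $K\colon X\fl Y'$ is a right homotopy along $s\colon Y\fl Y'$ (with $s\in\wc$) and $(p_0,p_1)\colon Y'\fl Y\times Y$, then $f-g=(p_0-p_1)K$ with $(p_0-p_1)s=0$; completing $s$ to a triangle $Z\fl Y\stackrel{s}{\fl}Y'\stackrel{v}{\fl}\shift Z$ one finds that $p_0-p_1$ factors through $v$, and $v\in(\tperp)$ since $s\in\wc$, whence $f-g\in(\tperp)$.

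For the last assertion, $f\sim g$ means $f$ is both left and right homotopic to $g$, hence by the above $f-g\in(\tperp)$. As $\tc$ is rigid we have $\shift\tc\subseteq\tperp$, so $(\shift\tc)\subseteq(\tperp)$ and the implication $f-g\in(\shift\tc)\Rightarrow f\sim g$ is immediate. Conversely, suppose $X\in\ccf$ and $f-g=\phi\psi$ with $\psi\colon X\fl W$, $W\in\tperp$; choose a triangle $A\stackrel{a}{\fl}X\stackrel{b}{\fl}B\fl\shift A$ with $A\in\tc$ and $B\in\shift\tc$. Then $\psi a\in\cat(A,W)=0$, because $A\in\tc$ and $W\in\tperp$, so $\psi$ factors through $b$; therefore $f-g=\phi\psi$ factors through $B\in\shift\tc$, i.e.\ $f-g\in(\shift\tc)$. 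I anticipate no real obstacle: the only points requiring a little care are the bookkeeping that identifies $\iota$ and $v$ as the connecting morphisms appearing in the definition of $\wc$, and the routine checks that the constructions in the second paragraph really are path and cylinder objects.
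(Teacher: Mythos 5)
Your proof is correct, but it takes a somewhat different route from the paper's. The paper first establishes a structural characterization: a factorisation of $\nabla$ is a cylinder object for $X$ if and only if it is isomorphic to $X\oplus X\to X\oplus U\stackrel{[1\,0]}{\to}X$ with $U\in\tc^\perp$ (and dually for path objects), obtained by splitting the retraction $X'\to X$ and then observing that the resulting split triangle is in $\wc$ exactly when the complement lies in $\tc^\perp$. With this in hand, (iii)$\Rightarrow$(i),(ii) is an explicit computation using a right $\tc$-approximation triangle $T\to X\stackrel{\alpha}{\to}U\to\Sigma T$, and (i),(ii)$\Rightarrow$(iii) drops out of the matrix form of an arbitrary cylinder/path object. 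You instead avoid characterizing all cylinder and path objects: for (iii)$\Rightarrow$(i),(ii) you build a path (resp.\ cylinder) object directly out of the given factorisation $f-g=\phi\psi$ through $W\in\tc^\perp$, and for the converse direction you extract a factorisation of $f-g$ through $(\tc^\perp)$ by completing $t$ (resp.\ $s$) to a triangle, applying $\cat(X,-)$ (resp.\ $\cat(-,Y)$), and invoking the definition of $\wc$ directly to see that the relevant connecting morphism lies in $(\tc^\perp)$. Both approaches are correct and comparable in length; the paper's classification of cylinder objects is a reusable structural fact, whereas your argument is more self-contained and works from the raw definition of $\wc$. Your treatment of the last assertion (factoring $\psi$ through the cone of a right $\tc$-approximation of $X$) matches the paper's.
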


\begin{proof}
We fist note that a factorisation of $\nabla$ is a cylinder object for $X$ if and only if
it is isomorphic to some $X\oplus X \stackrel{\left[^{1\;1}_{a\;b}\right]}{\gfl} X\oplus U \stackrel{[1\;0]}{\gfl}X$,
with $U\in\tc^\perp$. Indeed, if $\nabla$ factors as $X\oplus X \stackrel{[\al\;\beta]}{\gfl}X' \stackrel{s}{\gfl}X$,
then $s\al = 1$ so that $s$ is isomorphic to some $X\oplus U \stackrel{[1\;0]}{\gfl}X$. The morphism $s$ is thus in $\wc$ if and only if
$U$ belongs to $\tc^\perp$.
Similarly, path objects for $Y$ are exactly those factorisations of $\Delta$ that are isomorphic to some
$Y\stackrel{[1\;0]}{\gfl}Y\oplus V \stackrel{\left[^{1\;1}_{c\;d}\right]}{\gfl}Y\oplus Y$, with $V\in\tc^\perp$.

Assume first that $f-g\in\cat(X,Y)$ factors through $\tc^\perp$. Let $T\fl X\stackrel{\al}{\fl} U\fl \shift T$
be a triangle in $\cat$ with $T\in\tc$ and $U\in\tc^\perp$. Since $f-g$ is in $(\tc^\perp)$,
there is a morphism $U\stackrel{h}{\fl}Y$ such that $f-g = h\al$. The following two commutative diagrams show that $f\sim^r g$
and $f\sim^l g$:
\[
\xymatrix{
 & X\oplus X \bg_{[f\;g]} \bas^{\left[^{1\;1}_{\al\;0}\right]} \bdr^{\nabla} & &
 & X \dr^{[g\;\al]} \bdr_{[f\;g]} & Y\oplus U \bas^{\left[^{1\;1}_{h\;0}\right]} & Y \bg^{\Delta} \gau_{[1\;0]} \\
 Y & X\oplus U \gau^{[g\;h]} \dr_{[1\;0]} & X & & & Y\oplus Y &
} \]

Assume $f\sim^l g$. Then there is a commutative diagram:
\[
\xymatrix{
 & X\oplus X \bg_{[f\;g]} \bas^{\left[^{1\;1}_{a\;b}\right]} \bdr^{\nabla} & \\
 Y & X\oplus U \gau^{[u\;v]} \dr_{[1\;0]} & X,
} \]
with $U\in\tc^\perp$. This implies $f-g = v(a-b)$, so that $f-g$ factors through $\tc^\perp$.

The proof that $f\sim^r g$ implies $f-g\in(\tc^\perp)$ is similar.

Finally, if $X$ belongs to $\ccf$, then $f-g$ factors through $\tc^\perp$ if and only if it factors through $\shift\tc$.
\end{proof}

\begin{lemma}\label{lemma: second factorisation}
Any morphism in $\cat$ with cofibrant domain can be factored out as a morphism
in $\,\prescript{\hsq}{}{(I^\square)}\cap\prescript{\bsq}{}{}(I^\square)$ followed by a morphism in $I^\square$.
\end{lemma}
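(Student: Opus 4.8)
The plan is to build, starting from $f\colon A\gfl B$ with $A\in\ccf$, a cofibrant object $B'$, a morphism $p\colon B'\gfl B$ in $I^\square$ (a trivial fibration), and a morphism $j\colon A\gfl B'$ with $pj=f$ lying in $\prescript{\hsq}{}{(I^\square)}\cap\prescript{\bsq}{}{}(I^\square)$. First I would record what such a $j$ must do. Since $A$ is cofibrant and $B'$ will be cofibrant, the relation $j\hsq(I^\square)$ is automatic: given a square from $j$ to a trivial fibration $q\colon X\gfl Y$, one lifts the bottom edge $B'\gfl Y$ through $q$ because cofibrant objects are projective relative to weak fibrations (Lemma~\ref{lemma: first ppties}), and the resulting comparison with the top edge $A\gfl X$ differs from it by a morphism factoring through $\susm\operatorname{Cone}(q)\in\tperp$ (Lemma~\ref{lemma: J-inj}), hence, by Lemma~\ref{lemma: ideal} and Lemma~\ref{lemma: htp}, agrees with it up to $\sim^r$. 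Testing $j\bsq(I^\square)$ against the trivial fibrations $N\gfl 0$ with $N\in\tperp$ (these lie in $I^\square$ by Lemma~\ref{lemma: I-inj}), one sees that $j\bsq(I^\square)$ holds as soon as every morphism $A\gfl N$ with $N\in\tperp$ factors through $j$; and, via $j\hsq(I^\square)$, one checks that $j$ a homotopy cofibration forces $B'$ to be projective relative to trivial fibrations, i.e.\ $B'\in\ccf$ by Proposition~\ref{proposition: cofibrant}. So the two things the construction must deliver are: $B'\in\ccf$, and every $A\gfl N$ ($N\in\tperp$) extends over $j$.

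For the construction I would imitate, relative to $f$, Buan--Marsh's cofibrant replacement (Lemma~\ref{lemma: key lemma}). Complete $f$ to a triangle $A\xrightarrow{\,f\,}B\xrightarrow{\,g\,}C\xrightarrow{\,h\,}\shift A$, and apply Lemma~\ref{lemma: key lemma} to $C$: this gives a trivial fibration $q_C\colon QC\gfl C$ sitting in a triangle $C_1\gfl QC\xrightarrow{\,q_C\,}C\xrightarrow{\,h_C\,}\shift C_1$ with $QC\in\ccf$, $C_1\in\tperp$, and $h_C\in(\tc^\perp)$. Since $\shift f\circ h=0$, also $f\circ(\susm h\circ\susm q_C)=0$, so I set $B':=\operatorname{Cone}\bigl(\susm h\circ\susm q_C\colon\susm QC\gfl A\bigr)$, with its triangle $\susm QC\gfl A\xrightarrow{\,j\,}B'\xrightarrow{\,\kappa\,}QC$; as $f\circ(\susm h\circ\susm q_C)=0$, $f$ extends over $j$ to some $p\colon B'\gfl B$ with $pj=f$. (Equivalently one may first invoke Corollary~\ref{corollary: first factorisation} to write $f$ as a weak cofibration followed by a fibration and then repair the cone of that fibration by a right $\tc$-approximation; the two recipes agree in $\hoc$.) Applying the octahedral axiom to $A\xrightarrow{\,j\,}B'\xrightarrow{\,p\,}B$, the comparison map $\operatorname{Cone}(j)\gfl\operatorname{Cone}(f)$ is (up to sign) $q_C\colon QC\gfl C$, so $\operatorname{Cone}(p)\cong\operatorname{Cone}(q_C)=\shift C_1\in\shift\tc^\perp$ and $p$ is a fibration by Lemma~\ref{lemma: J-inj}; moreover the connecting morphism of $p$ is (up to sign) $h_C\circ g$, which lies in $(\tc^\perp)$ because $h_C$ does, so $p$ is a weak equivalence, and hence $p\in I^\square$ by Lemma~\ref{lemma: I-inj}. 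By construction $\operatorname{Cone}(j)\cong QC\in\ccf$.

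The remaining point — that $B'\in\ccf$ and that every morphism $A\gfl N$ with $N\in\tperp$ extends over $j$ — is where I expect the real work to lie. I would obtain both by splicing, via the octahedral axiom, the triangle defining $B'$ with the defining triangle $T_1^{A}\gfl T_0^{A}\xrightarrow{\rho_A}A\xrightarrow{\sigma_A}\shift T_1^{A}$ of the cofibrant object $A$ (recall $\shift T_1^{A}\in\shift\tc\subseteq\tperp$, and every map $A\gfl N$, $N\in\tperp$, factors through $\sigma_A$). The octahedron on $T_0^{A}\xrightarrow{\rho_A}A\xrightarrow{\,j\,}B'$ exhibits $B'$ as an iterated extension assembled from $T_0^{A}$, $\shift T_1^{A}$ and $QC$, and from the same diagram one reads off whether $\sigma_A$ composed with the ``fibre map'' $\susm QC\gfl A$ of $j$ vanishes. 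Both conclusions then come down to checking that certain connecting morphisms between the $\tc$-layers and $\shift\tc$-layers collapse — using crucially the rigidity of $\tc$ (i.e.\ $\cat(\tc,\shift\tc)=0$, so $(\shift\tc)\subseteq(\tc^\perp)$) together with the special feature of Lemma~\ref{lemma: key lemma} that the connecting morphism of a cofibrant replacement lies in $(\tc^\perp)$. This bookkeeping is the main obstacle: it does \emph{not} follow from a general closure property of $\ccf$ (which is not extension-closed in this generality, and $\shift\tc\ast\tc\not\subseteq\ccf$ in general), so one genuinely has to track the explicit morphisms produced by the construction.
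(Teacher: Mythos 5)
Your high-level reduction is correct and matches the structure of the paper's argument: once the middle object $B'$ is cofibrant, $j\hsq(I^\square)$ is automatic (a lift $\alpha$ with $q\alpha=v$ exists by projectivity of cofibrant objects relative to trivial fibrations, and $q(\alpha j-u)=0$ forces $\alpha j-u$ through the fibre of $q$, which lies in $(\tperp)$, so $\alpha j\sim^r u$ by Lemma~\ref{lemma: htp}); and given cofibrancy of $B'$, $j\bsq(I^\square)$ reduces to the condition that every $A\gfl N$ with $N\in\tperp$ factors through $j$. Your check that the second morphism $p$ is a trivial fibration (via the octahedron comparing $j$ and $f$, identifying the comparison map with $q_C$) is also correct. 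These are precisely the two facts the paper's proof delivers.

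The gap is in the construction of $B'$. Defining $B'$ as $\operatorname{Cone}\bigl(\susm h\circ\susm q_C\colon\susm QC\gfl A\bigr)$ only places it in $A\ast QC\subseteq(\tc\ast\shift\tc)\ast(\tc\ast\shift\tc)$, which is not contained in $\tc\ast\shift\tc$ in general, and the octahedral splicing you sketch merely reshuffles four layers $\tc,\shift\tc,\tc,\shift\tc$ without collapsing any of them; neither cofibrancy of $B'$ nor the factorization property for $j$ is actually established, as you yourself flag. The aside that the two recipes ``agree in $\hoc$'' does not help: the factorization must be produced in $\cat$, and cofibrancy is not preserved under weak equivalence.

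The paper sidesteps this entirely by choosing a middle object for which both properties are manifest. It takes a triangle $T_1\gfl T_0\gfl A\stackrel{\eps}{\gfl}\shift T_1$ with $T_0,T_1\in\tc$ (so that $\eps$ is a left $\tperp$-approximation of $A$, since $T_0\in\tc$ kills every map to $\tperp$), a cofibrant replacement $q_B\colon QB\gfl B$ of the \emph{codomain} $B$ rather than of the cone $C$, a lift $\widetilde f$ of $f$ through $q_B$, and sets $B'=QB\oplus\shift T_1$, $j=\bigl[\begin{smallmatrix}\widetilde f\\ \eps\end{smallmatrix}\bigr]$, $p=[q_B\;0]$. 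Cofibrancy of $B'$ is then immediate (direct sum of cofibrant objects), and the factorization property holds because $\eps$ is a component of $j$: any discrepancy $a-g j$ arising in a lifting square factors through the fibre of the trivial fibration, hence through $\tperp$, hence through $\eps$ and so through $j$. This direct-sum middle object, with the $\shift\tc$-summand governed by a left $\tperp$-approximation of the domain, is the ingredient your cone construction is missing and that makes the two conditions you correctly isolated come for free.
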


\begin{proof}
Let $f\in\cat(X,Y)$ be a morphism with $X\in\ccf$. Let $T_1 \fl T_0 \fl X \stackrel{\eps}{\fl}\shift T_1$
be a triangle with $T_0,T_1\in\tc$. We note that $\eps$ is a left-$\tc^\perp$ approximation.
Let $QY\stackrel{q_Y}{\gfl}Y$, with $QY\in\ccf$, be given by \cite[Lemma 3.3]{BMloc1} (see Lemma~\ref{lemma: key lemma}).
By Lemma~\ref{lemma: J-inj} and Lemma~\ref{lemma: I-inj}, the morphism $q_Y$ belongs to $I^\square$.
Since $X$ belongs to $\ccf$, there is a lift $\widetilde{f}$ as in the diagram:
\[
\xymatrix{
& QY \bas^{q_Y} \\
X \hdr^{\widetilde{f}}\dr_f & Y.}
\]
Consider the following factorisation of $f$: $X\stackrel{\left[^{\widetilde{f}}_\eps\right]}{\gfl}QY \oplus \shift T_1 \stackrel{[q_Y\;0]}{\gfl}Y$.
The morphism $[q_Y\;0]$ lies in $I^\square$. It remains to show that the morphism $\left[^{\widetilde{f}}_\eps\right]$ satisfies the required
lifting properties. Let thus
\[
\xymatrix{
X \dr^a \bas_{\left[^{\widetilde{f}}_\eps\right]} & A \bas^p \\
QY\oplus \shift T_1 \dr_b & B}
\]
be a commutative square with $p\in I^\square$.
Since $QY\oplus\shift T_1$ belongs to $\ccf$, there is some $QY\oplus\shift T_1\stackrel{g}{\gfl}A$ such that
$pg = b$. Complete $p$ to a triangle $C \stackrel{t}{\fl}A \stackrel{p}{\fl}B\fl \shift C$. Since $pa=pg\left[^{\widetilde{f}}_\eps\right]$,
there is a morphism $X\stackrel{h}{\fl}C$ satisfying $a = g\left[^{\widetilde{f}}_\eps\right] + th$. The morphism $p$ is in
$\wc$ by Lemma~\ref{lemma: I-inj} so that $t$ factors through $\tc^\perp$, and there is some $\shift T_1 \stackrel{k}{\fl}A$ with
$th = k\eps = [0\;k]\left[^{\widetilde{f}}_\eps\right]$. By Lemma~\ref{lemma: htp}, the morphisms $g$ and $g+[0\;k]$
are the two liftings whose existence was claimed in the statement.
\end{proof}

\noindent {\bf Proof of Theorem~\ref{theorem: main}}:
Since $\cat$ is additive, it has finite products and coproducts. Let us check conditions
(1) to (6) of Proposition~\ref{proposition: cof gene}. We note that even when $I$ and $J$ are not sets,
these conditions imply that $\Fib$ and $\Cof$ endow $\cat$ with a left-weak model structure.

(1) Since $\cat$ is a triangulated category, any epimorphism in $\cat$ is isomorphic to some
$Y\oplus Z \stackrel{[1\;0]}{\gfl}Y$. Let $X\stackrel{f}{\fl}Y$ be any morphism in $\cat$.
Then the square
\[
\xymatrix{
X\oplus Z \dr^{[1\;0]} \bas_{\left[^{f\;0}_{0\;1}\right]} & X \bas^f \\
Y\oplus Z \dr^{[1\;0]} & Y
}
\]
is a pull-back square. Moreover, if $f$ is a trivial fibration, then so is $\left[^{f\;0}_{0\;1}\right]$.

(2) The class $W$ is stable under retracts (Lemma~\ref{lemma: retracts})
and satisfies the 2-out-of-3 property (Corollary~\ref{corollary: 2-out-of-3}).

Condition (3) is precisely Corollary~\ref{cor: I-cof cap W}, condition (4) is Lemma~\ref{lemma: I-inj}.
The factorisations (5) and (6) are given respectively by Corollary~\ref{corollary: first factorisation}
and Lemma~\ref{lemma: second factorisation}.

Assume that $\tc$ is skeletally small, and let $t$ be a set of representatives for the
isomrophism classes of objects in $\tc$. Then $\ccf$ is also skeletally small: For each
$T_0,T_1\in t$ and each morphism $T_1\stackrel{\al}{\fl}T_0$ in $\cat$, let
$T_1\stackrel{\al}{\fl}T_0 \fl Y_\al \fl \shift T_1$ be a triangle in $\cat$.
Then the union $y$ of all $Y_\al$'s with $T_0,T_1\in t$ and $\al\in\cat(T_1,T_0)$
form a set of representatives for the isomorphism classes of objects in $\ccf$.
We define $J'$ to be the set of all morphisms $0\fl\shift T$, $T\in t$ and $I'$ to be the union
of all $\cat(T,Y)$ over $T\in t$ and $Y\in y$. Then $I'$ and $J'$ are sets and satisfy
$(I')^\square = I^\square$ and $(J')^\square = J^\square$.\qed

\vspace{7pt}
\noindent {\bf Proof of Corollary~\ref{corollary: main}}:
By Corollary~\ref{corollary: fibrant} and Proposition~\ref{proposition: cofibrant},
the full subcategory of $\cat$ on fibrant and cofibrant objects is $\ccf$.
Moreover, by Lemma~\ref{lemma: htp}, two morphisms $f,g$ in $\ccf$ are homotopic
if and only if $f-g$ factors through $(\shift\tc)$. Corollary~\ref{corollary: main}
thus follows from Theorem~\ref{theorem: main} and Theorem~\ref{theorem: Quillen}.
\qed

\vspace{7pt}
Next proposition is a consequence of Theorem~\ref{theorem: main} and Proposition~\ref{proposition: weq and htpeq}.
We nonetheless include a direct proof.

\begin{prop}\label{proposition: w-eq are htp-eq}
Any weak-equivalence between cofibrant objects
is a homotopy equivalence.
\end{prop}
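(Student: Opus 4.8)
The plan is to give a hands-on argument that does not rely on Proposition~\ref{proposition: weq and htpeq} (which rests on the general theory of Section~\ref{ssection: homotopy}), using only the explicit factorisation of Corollary~\ref{corollary: first factorisation} together with the description of homotopy from Lemma~\ref{lemma: htp}. Recall from Lemma~\ref{lemma: htp} that for \emph{any} pair of parallel morphisms $u,v$ one has $u\sim v$ if and only if $u-v\in(\tc^\perp)$; since $(\tc^\perp)$ is a two-sided ideal, this makes $\sim$ automatically compatible with composition, which is all the ``compatibility'' we shall need.

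First I would factor the given weak equivalence $f\colon X\to Y$ between cofibrant objects, via Corollary~\ref{corollary: first factorisation}, as $X\stackrel{i}{\gfl}X\oplus\shift T\stackrel{p}{\gfl}Y$, where $i$ is the canonical inclusion and $p=[f\,\al]$ for $\al$ a right $\shift\tc$-approximation of $Y$. The morphism $i$ lies in $\prescript{\square}{}{(J^\square)}\subseteq\wc$ by Corollary~\ref{cor: I-cof cap W}, so by the two-out-of-three property $p\in\wc$; hence $p\in J^\square\cap\wc=I^\square$ by Lemma~\ref{lemma: I-inj}, i.e.\ $p$ is a trivial fibration. The middle object is cofibrant: both $X$ and $\shift T$ lie in $\ccf$, and $\ccf$ is closed under finite direct sums (compose the cofibrations $\emptyset\to X\to X\oplus\shift T$ furnished by Axiom (0.5)). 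So it is enough to show that $i$ and $p$ are each homotopy equivalences, since then a homotopy inverse $g$ of $f$ is obtained as the composite of the two homotopy inverses, and $gf\sim 1_X$, $fg\sim 1_Y$ follow by pre- and post-composing the two homotopies and using that $(\tc^\perp)$ is an ideal.

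The morphism $i$ is immediate: the projection $r\colon X\oplus\shift T\to X$ satisfies $ri=1_X$, while $1_{X\oplus\shift T}-ir$ is the projection onto the summand $\shift T$, which factors through $\shift T\in\shift\tc\subseteq\tc^\perp$; hence $ir\sim 1$ by Lemma~\ref{lemma: htp}. For $p$, since $Y$ is cofibrant the morphism $\emptyset\to Y$ is a cofibration, so it has the left lifting property against the trivial fibration $p$ (Axiom (3)), producing $s\colon Y\to X\oplus\shift T$ with $ps=1_Y$. Then $p(sp-1)=0$, so, after completing $p$ to a triangle $C\stackrel{t}{\gfl}X\oplus\shift T\stackrel{p}{\gfl}Y\to\shift C$, the morphism $sp-1$ factors through $t$; and $t\in(\tc^\perp)$ because $p\in\wc$. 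Therefore $sp-1\in(\tc^\perp)$, i.e.\ $sp\sim 1$, and $p$ is a homotopy equivalence.

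Finally I would assemble the pieces: with $g:=rs\colon Y\to X$ one gets $gf-1_X=r(sp-1)i$ and $fg-1_Y=p(ir-1)s$, both in $(\tc^\perp)$, so $gf\sim 1_X$ and $fg\sim 1_Y$, whence $f$ is a homotopy equivalence. I do not anticipate a genuine obstacle here: the only points needing a little care are verifying that $X\oplus\shift T$ is cofibrant (so that $i$ and $p$ really are morphisms of cofibrant objects and the preceding lemmas apply) and keeping the argument self-contained — in particular avoiding any appeal to Proposition~\ref{proposition: weq and htpeq} or to a general ``homotopy is compatible with composition'' statement, both of which are circumvented by the elementary characterisation in Lemma~\ref{lemma: htp}.
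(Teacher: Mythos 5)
Your proof is correct, and it takes a genuinely different route from the paper's. The paper argues by a direct diagram chase: it forms the triangle on $f$, uses cofibrancy of $Y$ to pick a triangle $T_1\to T_0\to Y\to\shift T_1$ with $T_i\in\tc$, and constructs the homotopy inverse $\eps\colon Y\to X$ explicitly through an octahedral-style diagram; the verification that $\eps f-1\in(\shift\tc)$ then invokes a second triangle $U_0\to X\to\shift U_1$, so cofibrancy of $X$ is used essentially. You instead factor $f$ through $X\oplus\shift T$ as the inclusion $i$ followed by $p=[f\,\al]$, observe that $p$ is a trivial fibration by two-out-of-three together with Lemma~\ref{lemma: I-inj}, and show each factor is a homotopy equivalence: for $i$ the section $r$ satisfies $1-ir\in(\shift\tc)\subseteq(\tc^\perp)$; for $p$ the section $s$ comes from the lifting property of the cofibration $\emptyset\to Y$ against the trivial fibration $p$, and $1-sp$ factors through the cocone of $p$, which lies in $(\tc^\perp)$ because $p\in\wc$. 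This is more model-categorical in spirit, and it is a concrete instance of the philosophy behind Ken Brown's lemma used in Proposition~\ref{proposition: weq and htpeq}. Two small remarks: the check that $X\oplus\shift T$ is cofibrant is not actually needed, since Lemma~\ref{lemma: htp} characterises homotopy as ``difference in $(\tc^\perp)$'' for \emph{arbitrary} source and target; and because the lift $s$ only uses cofibrancy of $Y$, your argument in fact proves the slightly stronger statement that any weak equivalence $X\to Y$ with $Y$ cofibrant is a homotopy equivalence, whereas the paper's proof needs both ends cofibrant.
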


\begin{proof}
Let $f: X\fl Y$ be a weak equivalence. Complete it to a triangle
$Z \fl X \fl Y\fl \shift Z$. The object $Y$ being cofibrant,
there is (by Proposition~\ref{proposition: cofibrant})
a triangle $T_1 \fl T_0 \fl Y \fl \shift T_1$ in $\cat$,
with $T_0,T_1\in\tc$. Since $f$ is a weak equivalence, the morphisms
$g$ and $h$ in the following diagram belong to $(\tperp)$:
\[
\xymatrix{
& & T_0 \bas^\al & & \\
Z \dr^g & X \dr^f \gex^\eta & Y \gex^\eps \bas_\beta \dr^h & \shift Z \dr^{-\shift g} & \shift X \\
& & \shift T_1 \hdrp_\gamma \upex_\delta & &
}
\]
Since $T_0$ is in $\tc$, the composition $h\al$ vanishes and there is a morphism
$\gamma : \shift T_1 \fl \shift Z$ such that $h = \gamma\beta$. We have moreover
$(\shift g)\gamma = 0$ since $\shift T_1$ belongs to $\shift \tc$ and $\shift g$
to $(\shift\tperp)$. Thus, there is some $\delta : \shift T_1 \fl Y$ with
$\gamma = h\delta$. We have the equalities:
$h\delta\beta = \gamma\beta = h$ so that the morphism $1-\delta\beta$ factors through $f$.
Let $\eps : Y \fl X$ be such that $f\eps + \delta\beta = 1$. Since $\delta\beta$ belongs to $(\shift\tc)$,
the morphsism $\eps$ is a right homotopy inverse of $f$. We claim that $\eps$ is homotopy
inverse to $f$. Indeed, we have the following equalities:
$f \eps f = (1-\delta\beta)f = f - \delta\beta f$, and
$h\delta\beta f = \gamma\beta f = hf = 0$. Thus, there is some
$u : X \fl X$ such that $\delta\beta f = fu$ so that
$f(\eps f - 1 - u) = 0$. There is some $\eta : X \fl Z$ with
$\eps f - 1 = u + g\eta$ and $fu$ belongs to $(\shift\tc)$.
Let us show that $u$ itself belongs to the ideal $(\shift\tc)$. For this, the assumption that
$\tc$ be rigid is needed.
Let $\xymatrix{U_0 \dr^a & X \dr^b & \shift U_1 \dr &}$ be a triangle
in $\cat$ with $U_0,U_1$ in $\tc$. We have $fua = 0$ since
$fu\in (\shift\tc)$ and $\tc$ is rigid. This shows that the morphism
$ua$ factors through $g$, and thus through $\tc^\perp$. Therefore
$ua = 0$ and $u$ factors through $b$ which belongs to $(\shift\tc)$ as
claimed. We have proved that $\eps f -1$ belongs to $(\shift\tc)$
so that $f$ is a homotopy equivalence.
\end{proof}

\subsection{Example}\label{ssection: example1}

In this subsection is given an example of a lifting property $a\hsq g$.
We consider the cluster category $\cat$ of type $\rm{D}_4$ \cite{BMRRT}, over some field $\field$.

\begin{center}
\begin{tikzpicture}
\draw (2,1) node[circle, fill=blue!15, scale=.8] {$T'$} ;
\draw (4,-.2) node[circle, fill=blue!15, scale=.8] {$T$} ;
\draw (6,1) node[circle, fill=blue!15, scale=.8] {$T''$} ;
\draw (10,1) node[circle, fill=blue!15, scale=.8] {$T'$} ;
\draw (12,-.2) node[circle, fill=blue!15, scale=.8] {$T$} ;
\draw (2,-.2) node[fill=black!15, scale=.8] {$\shift T$} ;
\draw (2,-1) node[fill=black!15, scale=.8] {$F$} ;
\draw (4,1) node[fill=black!15, scale=.8] {$\shift T`''$} ;
\draw (6,-.2) node[fill=black!15, scale=.8] {$B$} ;
\draw (8,1) node[fill=black!15, scale=.8] {$\shift T'$} ;
\draw (10,-.2) node[fill=black!15, scale=.8] {$\shift T$} ;
\draw (10,-1) node[fill=black!15, scale=.8] {$F$} ;
\draw (12,1) node[fill=black!15, scale=.8] {$\shift T''$} ;
\draw (5,0) node {$A$} ;
\draw (6,-1) node {$C$} ;
\draw (7,0) node {$D$} ;
\draw (8,-.2) node {$E$} ;
\draw (5.35,.62) node {$a$} ;
\draw (5.3,-.65) node {$e$} ;
\draw (5.5,.05) node {$c$} ;
\draw (6.65,.65) node {$b$} ;
\draw (6.45,.07) node {$d$} ;
\draw (6.65,-.7) node {$f$} ;
\draw (7.55,.07) node {$g$} ;
\foreach \x in {1,2,...,6} {
 \draw[->] (2*\x-.8,.2) --(2*\x-.2,.8) ;
 \draw[->] (2*\x-.8,0) --(2*\x-.2,-.2) ;
 \draw[->] (2*\x-.8,-.2) --(2*\x-.2,-.8) ;
 \draw[->] (2*\x+.2,.8) --(2*\x+.8,.2) ;
 \draw[->] (2*\x+.2,-.2) --(2*\x+.8,0) ;
 \draw[->] (2*\x+.2,-.8) --(2*\x+.8,-.2) ;
}
\draw[thick, dashed, blue] (.5,0) --(1.8,1.3) --(2.4,1.3) --(2.4,-1.3) --(1.8,-1.3) --cycle ;
\begin{scope}[xshift=8cm] \draw[thick, dashed, blue] (.5,0) --(1.8,1.3) --(2.4,1.3) --(2.4,-1.3) --(1.8,-1.3) --cycle ; \end{scope}
\end{tikzpicture}
\end{center}

The figure above gives the Auslander--Reiten quiver of the cluster category $\cat=\cat_{\rm{D}_4}$.
It should be thought of as lying on a cylinder. In particular, the two parts of this quiver
which have been circled in dashed blue are identified. Possible references on Auslander--Reiten quivers
include \cite{ASS} (for categories of modules) and \cite{Happel} (for derived categories).

The rigid subcategory $\tc$ that we consider is given by $\add(T\oplus T'\oplus T'')$, where the indecomposable objects
$T,T'$ and $T''$ are highlighted in pale blue disks. Indecomposable objects which belong to $\tc^\perp$ are
drawn in light grey boxes.

The morphism $g\in\cat(D,E)$ is part of a(n Auslander--Reiten) triangle:
\[
B \stackrel{d}{\gfl}D\stackrel{g}{\gfl}E\stackrel{\eps}{\gfl}\shift B.
\]
All the triangles in the derived category of $\rm{D}_4$ are described in \cite{KN}, and these
triangles induce triangles in the cluster category $\cat_{\rm{D}_4}$.
As indicated in the figure, $B$ belongs to $\tc^\perp$, so that $\shift B$ (which is $T$)
belongs to $\shift\tc^\perp$. Moreover, the triangle does not split and $\cat(E,T)$ is one dimensional.
It follows that the morphism $\eps$ is given, up to multiplication by a non-zero scalar,
by the path going from $E$ to $T$ via $F$. The latter indecomposable belongs to $\tc^\perp$. This shows that
$g$ is an acyclic fibration. Let $a\in\cat(A,T'')$ be as in the figure.
Let us check that the lifting property $a\hsq g$ holds, but not the lifting property $a\square g$.
For any commutative square:
\[
\xymatrix{
A \dr^x \bas_a & D \bas^g \\
T'' \dr_y & E,
}
\]
we may assume that $y$ equals $gb$ since $\cat(T'',E)$ is one dimensional.
Moreover, $B$ is in $\tc^\perp$, so that $\cat(-,g)|_{\tc}$ is injective and a lift
$\al\in\cat(T'',D)$ with $g\al = y = gb$ is uniquely given by $b$.
The morphism space $\cat(A,D)$ is two-dimensional. By using the relevant mesh relation,
there are two scalars $\ld,\mu\in\field$ such that $x = \ld ba +\mu dc$. Since the square commutes and $gd=0$,
we have $\ld=1$. Therefore, the property $a\hsq g$ holds. Picking $\mu=1$, we have
$x=ba+dc=-fe\neq ba$ so that $a\square g$ does not hold.

\subsection{Some questions under investigation}

In this section, we explain some questions whose answers we would have liked to
give in the present paper, but which are still under investigation.

\subsubsection*{Abelian structure of $\hoc$}

If $\cat$ is a pointed model category, then it is often the case that its homotopy category $\hoc$ is triangulated.
Moreover, the triangulated structure on $\hoc$ is determined by the model structure on $\cat$.
In Theorem~\ref{theorem: main}, the category $\cat$ is triangulated and its homotopy category is abelian
(\cite{KZ,IY,BeligiannisRigid}). We would like to describe this abelian structure directly from the weak model structure on $\cat$.
More generally, it would be interesting to have a sufficient condition on a weak model structure so that the homotopy category is abelian.

\subsubsection*{Calculus of fractions}

In \cite{BMloc2} and \cite{BeligiannisRigid}, it is shown that the localisation of the triangulated category $\cat$
at the class $\wc$ is not far from admitting a calculus of fractions. More precisely, the localisation of $\cat/(\tc^\perp)$
at the image of the class $\wc$ (which turns out to coincide with the class of regular morphisms) admits a calculus of fractions.
Since $\cat/(\tc^\perp)$ is also the category up to homotopy, we expect the existence of the weak model structure on $\cat$ to
give a new proof that $\cat/(\tc^\perp)$ admits a calculus of fractions (by analogy with \cite{Brown}).

\subsubsection*{Hom-infinite cluster categories}

The results in \cite{BMloc1, BeligiannisRigid} as well as Theorem~\ref{theorem: main} do not apply to
Hom-infinite cluster categories. The reason for this is the following: Let $(Q,W)$ be
a Jacobi-infinite quiver with potential and let $\cat_{Q,W}$ be the associated cluster category.
If $\Gamma$ is the image in the cluster category $\cat_{Q,W}$ of the complete Ginzburg dg-algebra 
associated with $(Q,W)$, then $\add\,\Gamma$ is presumably not contravariantly finite.
Our hope is to adapt the proof of Theorem~\ref{theorem: main} to the setup of Hom-infinite cluster categories.

\subsubsection*{Model structures}

Categories with model structures are quite often abelian or exact categories
(see \cite{SchwedeShipley} for many examples and a precise statement).
Here, the category $\cat$ is triangulated so that this is not a natural setup for constructing
model structures. This is the main reason why the structure defined in Theorem~\ref{theorem: main} is
only a weak version of a model structure. It would thus be more natural to adapt the proof of this theorem
to the case of an exact category $\cat$: for example, the Frobenius categories coming from preprojective algebras studied by
Gei\ss--Leclerc--Schr\"oer (see~\cite{GLSsurvey} for a nice survey) and Buan--Iyama--Reiten--Scott~\cite{BIRSc},
or the Frobenius categories constructed by Demonet--Luo in~\cite{DL,DL2}. One might hope
that the techniques from Section~\ref{section: proof} would give rise to model category structures on these Frobenius categories.

\end{document}